\definecolor{refkey}{rgb}{0,0,1}
\definecolor{labelkey}{rgb}{0,0,1}
 \def\e{\, \,}
\newtheorem{lemma}{Lemma}[section]
\newtheorem{theorem}[lemma]{Theorem}
\newtheorem{corollary}[lemma]{Corollary}
\newtheorem{proposition}[lemma]{Proposition}
\newtheorem{conjecture}[lemma]{Conjecture}
\theoremstyle{remark}
\newtheorem{remark}[lemma]{Remark}
\newtheorem{example}[lemma]{Example}
\newtheorem{definition}[lemma]{Definition}
\newcommand{\QQ}{\mathbb{C}}
\newcommand{\Sym}[1]{\mathfrak{S}_{#1}}
\newcommand{\hata}{\bar{\alpha}}
\newcommand{\hatb}{\bar{\beta}}
\newcommand{\hats}{\ensuremath{\bar{\sigma}}\xspace}
\newcommand{\downi}[1]{\ensuremath{ {\downarrow (#1)}}\xspace}
\newcommand{\downalphai}[1]{\ensuremath{\downarrow (\eye_{#1})}\xspace}
\newcommand{\eps}{\varepsilon}
\DeclareMathOperator{\Odd}{Odd}
\DeclareMathOperator{\Even}{Even}
\author{Valentin F\'eray}
\address{Institut f\"ur Mathematik, Universit\" at Z\" urich, Z\" urich, Switzerland}
\email{valentin.feray@math.uzh.ch}
\author{Amarpreet Rattan}
\address{Department of Economics, Mathematics and Statistics Birkbeck College,
University of London Malet Street, London WC1E 7HX, UK}
\email{arattan@gmail.com}
\title[On products of cycles]
{On products of long cycles:\\
short cycle dependence and separation probabilities}
\begin{document}

\begin{abstract}
We present various results on multiplying cycles in the symmetric group.  One
result is a generalisation of the following theorem of Boccara (1980):  the number of
ways of writing an odd permutation in the symmetric group on $n$ symbols as a product
of an $n$-cycle and an $(n-1)$-cycle is independent of the permutation chosen.  
We give a number of different approaches of our generalisation.
One partial proof uses an inductive method
which we also apply to other problems.  In particular, we give a formula for the distribution of
the number of cycles over all products of cycles of fixed lengths.  
Another application is related to the recent notion of separation probabilities for permutations
introduced by Bernardi, Du, Morales and Stanley (2014).
\end{abstract}  

\maketitle
\section{Introduction}\label{sec:intro}
We begin by introducing standard definitions and notation.

Let $n$ be a positive integer.  A \emph{partition $\lambda$ of $n$} is a weakly decreasing sequence of positive
integers $(\lambda_1, \cdots, \lambda_r)$ whose sum is $n$, and we write $\lambda \vdash n$ or $|\lambda| = n$.  
We call the $\lambda_i$ the \emph{parts} of $\lambda$.  The number of parts
$r$ is the \emph{length} of $\lambda$ and we write this as 
$\ell(\lambda)$.
We will also use the exponential notation $\lambda=(1^{m_1},2^{m_2},\dots)$,
where $m_i$ (or $m_i(\lambda)$) is the number of parts of $\lambda$ equal to $i$.
For any non-negative integer $a < n$, a partition of the form $(n-a,
1^a)$ is called a \emph{hook}.

Let $\Sym{n}$ be the symmetric group on the symbols 
$[n] : = \{1,2, \dots, n\}$. 
It is well known that the conjugacy classes of
$\Sym{n}$ are indexed by partitions of $n$;  a permutation's weakly
decreasing list of cycle lengths determines the partition giving its conjugacy
class.  We call this partition the permutation's \emph{cycle type}, and we denote by
$C_\lambda$ the class of permutations with type $\lambda$.
Any permutation whose cycle type  is a hook
is called a \emph{cycle}, and, more specifically, a permutation with cycle type
$(n-i, 1^i)$ is an $(n-i)$-cycle.  A \emph{full} cycle has type $(n)$. 

Let $\eps$
be the signature of a permutation;  that is,  
$\eps(\sigma) = 1$ if $\sigma$ is an even permutation and $-1$ otherwise.  We
abuse notation and write $\eps(\lambda)$ for a partition $\lambda$, which is the
signature of any permutation of type $\lambda$, and we call the partition
\emph{even} or \emph{odd} accordingly.  It is well known that a partition $\lambda$ is even if and
only if $|\lambda| - \ell(\lambda)$ is an even integer.

For a partition of $\lambda$ of $n$, let $K_\lambda$ be the formal sum of all
elements in the group algebra of $\Sym{n}$ with cycle type $\lambda$.  The set $\{K_\lambda : \lambda
\vdash n\}$ forms a basis for the centre of the group algebra.  Thus, for partitions $\mu$ and $\nu$ of $n$, we have
an expansion
\begin{equation*}
	K_\mu \cdot K_\nu = \sum_{\lambda \vdash n} c^\lambda_{\mu,\nu}
	K_\lambda.
\end{equation*}
The numbers $c^\lambda_{\mu,\nu}$ are known as the \emph{connection
coefficients} of $\Sym{n}$ and are the central objects of this article.  Equivalently, the connection coefficient
$c^\lambda_{\mu,\nu}$ gives the number of ways of writing a fixed permutation of cycle
type $\lambda$ as the product of two permutations of cycle type $\mu$ and $\nu$,
respectively.  It is this latter combinatorial interpretation of the connection
coefficients that we focus on in this article.

Finding $c^\lambda_{\mu, \nu}$ is an important problem in algebraic combinatorics.  See for example the early work of H. Farahat and G. Higman \cite{FarahatHigman1959}, D. Walkup \cite{Walkup1979}, R. Stanley \cite{StanleyFactorizationIntoNCycles}, D. Jackson \cite{JacksonPermutationsCharacters} and I. Goulden
and D. Jackson \cite{GouldenJackson1992}.  The connection coefficients of the
symmetric group also appear in contexts outside of algebra or combinatorics.
In particular, P. Hanlon, J. Stembridge and R. Stanley relate them to the
spectra of random matrices in \cite{hanlonstembridgestanley}, while D. Jackson
and T. Visentin \cite{JacksonVisentiCharacter} 
establish a connection with rooted maps embeddable on
orientable surfaces.

A rather general formula for $c^\lambda_{\mu, \nu}$ is due to A. Goupil
and G. Schaeffer \cite{GoupilSchaefferStructureCoef}. 
But this result, like most results before it, requires that at
least one of the three partitions $\lambda, \mu$ or $\nu$ is $(n)$.
Since
\begin{equation*}
	|C_\lambda| c^\lambda_{\mu, \nu} = |C_\mu| c^\mu_{\lambda, \nu},
\end{equation*}
we see it is immaterial which one of $\lambda, \mu$ or $\nu$ is $(n)$.

The reason why many of the known results require one of $\lambda, \mu$ or
$\nu$ to be $(n)$ is that a common tool to compute these numbers, which is central in a number of the articles above, 
 is the well-known expression relating the connection coefficients to the irreducible
 character values of the symmetric group, contained in Proposition
 \ref{PropForm3Char} below.  This expression,  which is not specific to the symmetric group,
 is in general not tractable for arbitrary partitions.  
When one of the partitions is $(n)$, however, 
there is a substantial simplification in the formula given in Proposition~\ref{PropForm3Char}:
only characters indexed by hooks contribute to the result.
This well-known simplification is contained in Lemma \ref{LemCharNCycle} below.\bigskip

The starting point of this paper is the following observation:
when one of the partitions corresponds to an $n$-cycle and another corresponds to an $(n-a)$-cycle for small $a$ (say $\mu$ and $\nu$, respectively), 
the formula expressing $c^\lambda_{\mu, \nu}$ in terms of characters
becomes even simpler.

For partitions $\lambda \vdash n$ and $\rho \vdash a$ with $n\ge a$,
we denote by $\nu_\rho(\lambda): =
c^{\lambda}_{(n-a, \rho), (n)}$;  i.e. $\nu_\rho(\lambda)$  is 
the number of ways to write a given permutation $\sigma$ of type $\lambda$
as the product of a $n$-cycle $\alpha$ and a permutation $\beta$
of type $(n-a) \cup \rho$.
We present the following theorem.

\begin{theorem}
    Let $n$ and $a < n$ be positive integers.  Fix a partition $\rho$ of $a$.
    There exists an explicit polynomial $Z_\rho$ in the variables $n$ and
    $m_1,\ldots,m_{a-1}$ such that, for every partition $\lambda$ of $n$
    with $m_i$ cycles of length $i$ (for $1\leq i\leq a-1$), one has that
    \begin{equation}
        \nu_\rho(\lambda) = \big(1+ (-1)^a \eps(\rho) \eps(\lambda) \big)
        \, (n-a-1)! \, Z_\rho(n,m_1,\ldots,m_{a-1}).
        \label{EqDependence}
    \end{equation}
    \label{ThmDependence}
\end{theorem}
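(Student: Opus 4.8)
The plan is to begin from the expression of a connection coefficient in terms of irreducible characters (Proposition~\ref{PropForm3Char}), specialise the third partition to $(n)$ so that Lemma~\ref{LemCharNCycle} makes all non-hook characters vanish, and then extract a \emph{second} simplification from the long part $n-a$ of $\mu=(n-a,\rho)$ by means of the Murnaghan--Nakayama rule. Concretely, writing $\nu_\rho(\lambda)=c^\lambda_{(n-a,\rho),(n)}$ and using Lemma~\ref{LemCharNCycle} (only hooks $(n-k,1^k)$ survive, with character $(-1)^k$ on $(n)$ and dimension $\binom{n-1}{k}$), I would first reach
\[
\nu_\rho(\lambda)=\frac{|C_{(n-a,\rho)}|}{n}\sum_{k=0}^{n-1}\frac{(-1)^k\,\chi^{(n-k,1^k)}_{(n-a,\rho)}\,\chi^{(n-k,1^k)}_{\lambda}}{\binom{n-1}{k}}.
\]

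The heart of the argument is then to apply Murnaghan--Nakayama to $\chi^{(n-k,1^k)}_{(n-a,\rho)}$, removing the part $n-a$ first. A hook has at most two rim hooks of size $n-a$: one in the arm, present exactly when $k\le a-1$, of height $0$, whose removal leaves the hook $(a-k,1^k)$; and one in the leg, present exactly when $k\ge n-a$, of height $n-a-1$, whose removal leaves $(n-k,1^{k-(n-a)})$. Thus the sum over $k\in\{0,\dots,n-1\}$ collapses to a sum over $k\in\{0,\dots,a-1\}$ (the arm contribution, involving $\chi^{(a-k,1^k)}_\rho$) plus one over $k\in\{n-a,\dots,n-1\}$ (the leg contribution). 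In the second sum I would substitute $k=n-1-l$ and apply twice the conjugation identity $\chi^{\tau'}_{\pi}=\eps(\pi)\chi^{\tau}_{\pi}$, once to the $\rho$-character and once to the $\lambda$-character, using $(l+1,1^{a-1-l})'=(a-l,1^l)$ and $(l+1,1^{n-1-l})'=(n-l,1^l)$. Tracking the signs ($(-1)^{n-a-1}$ from the rim-hook height and $(-1)^{n-1}$ from $(-1)^k$ under the reindexing multiply to $(-1)^a$), the leg contribution becomes exactly $(-1)^a\eps(\rho)\eps(\lambda)$ times the arm contribution, whence
\[
\nu_\rho(\lambda)=\bigl(1+(-1)^a\eps(\rho)\eps(\lambda)\bigr)\,\frac{|C_{(n-a,\rho)}|}{n}\sum_{k=0}^{a-1}\frac{(-1)^k\,\chi^{(a-k,1^k)}_{\rho}\,\chi^{(n-k,1^k)}_{\lambda}}{\binom{n-1}{k}}.
\]

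It then remains to recognise the right-hand side as $(n-a-1)!$ times a polynomial in $n,m_1,\dots,m_{a-1}$. The coefficient $\frac{|C_{(n-a,\rho)}|}{n}\cdot\frac{1}{\binom{n-1}{k}}$ equals $\frac{k!\,(n-1-k)!}{z_{(n-a,\rho)}}$; since $k\le a-1$ one can pull $(n-a-1)!$ out of $(n-1-k)!$, the remaining piece being a product of consecutive integers in $n$, and $z_{(n-a,\rho)}$ together with $\chi^{(a-k,1^k)}_\rho$ depends only on the fixed data $\rho$. Finally the character values $\chi^{(n-k,1^k)}_{\lambda}$ for $0\le k\le a-1$ are polynomials in $m_1(\lambda),\dots,m_{a-1}(\lambda)$: expanding the hook Schur function $s_{(n-k,1^k)}=\sum_{i=0}^{k}(-1)^i h_{n-k+i}e_{k-i}$ and pairing with the power sum $p_\lambda$, one uses $\langle h_r,p_\alpha\rangle=1$ and $\langle e_s,p_\beta\rangle=\eps(\beta)$ to get $\langle h_re_s,p_\lambda\rangle=\sum_{\beta\vdash s}\eps(\beta)\prod_{j}\binom{m_j(\lambda)}{m_j(\beta)}$; as $s=k-i\le a-1$ only $m_1,\dots,m_{a-1}$ occur, the binomials automatically vanishing off the sub-multiset condition. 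Assembling these pieces defines the polynomial $Z_\rho$ and yields \eqref{EqDependence}.

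The step I expect to be the real obstacle is the Murnaghan--Nakayama computation and the matching of the two contributions: one must correctly enumerate the rim hooks of size $n-a$ inside a hook (noting that when $n\le 2a-1$ the arm and leg cases can both occur for the same $k$), and, above all, pair the leg sum with the arm sum after reindexing so that the two uses of the conjugation identity combine to produce precisely the prefactor $1+(-1)^a\eps(\rho)\eps(\lambda)$; keeping $(-1)^{n-a-1}$, $(-1)^{n-1}$ and the two copies of $\eps$ consistent is where an error is most likely. By comparison, the polynomiality in the last step is essentially a formality once the hook Schur expansion is in hand.
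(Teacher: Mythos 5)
Your proposal is correct and follows essentially the same route as the paper's Section~\ref{SectCharacters} proof: Proposition~\ref{PropForm3Char} with one partition equal to $(n)$, reduction to hooks, the Murnaghan--Nakayama analysis of the part $n-a$ restricting the hook index to $k\le a-1$ or $k\ge n-a$, and polynomiality of hook characters in the small multiplicities (your scalar-product computation is just a re-derivation of Littlewood's Lemma~\ref{LemCharHookExplicit}). The only difference is cosmetic and in your favour: you obtain the prefactor $1+(-1)^a\eps(\rho)\eps(\lambda)$ by explicitly pairing the leg range with the arm range via conjugation, where the paper leaves that sign bookkeeping to the parity argument and the corollary to Lemma~\ref{LemCharHookExplicit}; note the paper likewise works under the assumption $n>2a$, so your flagged caveat about overlapping ranges is not a gap relative to the paper.
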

That $\nu_\rho(\lambda)$ vanishes if
$(-1)^a \eps(\rho) \eps(\lambda)=-1$ is the consequence of a simple parity argument
on permutation signs.
The surprising aspect of Theorem \ref{ThmDependence} is 
that $\nu_\rho(\lambda)$ depends only on $|\lambda|$, $\eps(\lambda)$, $m_1(\lambda)$, \dots, $m_{a-1}(\lambda)$ and not on the larger multiplicities of $\lambda$.

Theorem \ref{ThmDependence} can be seen as a generalisation of Theorem
\ref{thm:boc} by Boccara \cite{boccara}.
\begin{theorem}[Boccara]
	\label{thm:boc}
	Let $n$ be a positive integer and $\lambda$ an odd partition of $n$.  Then
	\begin{equation*}
		c^{\lambda}_{(n), (n-1, 1)} = 2 (n-2)!.
	\end{equation*}
\end{theorem}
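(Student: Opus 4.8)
The plan is to compute $c^{\lambda}_{(n),(n-1,1)}$ directly from the character formula of Proposition \ref{PropForm3Char}, taking advantage of the fact that two of the three partitions are hooks. Writing out that formula (using that all characters of $\Sym{n}$ are integer-valued) and invoking Lemma \ref{LemCharNCycle} — which tells us that the only irreducible characters of $\Sym{n}$ that do not vanish on a full $n$-cycle are those indexed by hooks $(n-k,1^{k})$, on which the value is $(-1)^{k}$ — collapses the sum over irreducibles to a sum over $k$ from $0$ to $n-1$. Since $\chi^{(n-k,1^{k})}(\id)=\binom{n-1}{k}$, and since $|C_{(n)}|=(n-1)!$ and $|C_{(n-1,1)}|=n(n-2)!$ so that the global prefactor $|C_{(n)}|\,|C_{(n-1,1)}|/n!$ equals $(n-2)!$, one arrives at an expression of the shape
\begin{equation*}
c^{\lambda}_{(n),(n-1,1)} \;=\; (n-2)!\sum_{k=0}^{n-1}\frac{(-1)^{k}\,\chi^{(n-k,1^{k})}\big((n-1,1)\big)\,\chi^{(n-k,1^{k})}(\lambda)}{\binom{n-1}{k}}.
\end{equation*}

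The key step is then to evaluate the hook character $\chi^{(n-k,1^{k})}$ at the class $(n-1,1)$, and I expect the sign bookkeeping here to be the only genuine obstacle. Apply the Murnaghan--Nakayama rule, removing first the border strip of size $1$ corresponding to the fixed point: from the hook $(n-k,1^{k})$ this can be done in at most two ways, deleting the last box of the arm or the last box of the leg, each a strip of height $0$, which produces the size-$(n-1)$ hooks $(n-k-1,1^{k})$ and $(n-k,1^{k-1})$. Applying Lemma \ref{LemCharNCycle} inside $\Sym{n-1}$ to evaluate these at the $(n-1)$-cycle gives $(-1)^{k}$ and $(-1)^{k-1}$ respectively, so for every $k$ with $1\le k\le n-2$ the two contributions cancel and $\chi^{(n-k,1^{k})}\big((n-1,1)\big)=0$. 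The edge cases must be handled separately: for $k=0$ one has the trivial character, with value $1$, and for $k=n-1$ one has the sign character $\eps$, whose value on the class $(n-1,1)$ is $(-1)^{n}$.

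Consequently only the terms $k=0$ and $k=n-1$ survive the sum, and using $\chi^{(n)}\equiv 1$ and $\chi^{(1^{n})}=\eps$ we obtain
\begin{equation*}
c^{\lambda}_{(n),(n-1,1)} \;=\; (n-2)!\Big(\chi^{(n)}(\lambda)+(-1)^{n-1}(-1)^{n}\,\chi^{(1^{n})}(\lambda)\Big) \;=\; (n-2)!\big(1-\eps(\lambda)\big).
\end{equation*}
Since $\lambda$ is odd we have $\eps(\lambda)=-1$, and the right-hand side is $2(n-2)!$, as claimed. Alternatively, one could simply specialise Theorem \ref{ThmDependence} to $a=1$ and $\rho=(1)$, for which $Z_{\rho}$ is a polynomial in $n$ alone; but fixing the constant $Z_{(1)}=1$ still requires essentially the computation above, so the self-contained character argument seems the more economical route.
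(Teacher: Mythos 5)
Your computation checks out: the prefactor $n!/(z_{(n)}z_{(n-1,1)})=(n-2)!$, the collapse to hooks via Lemma \ref{LemCharNCycle} with $\chi^{(n-k,1^k)}\big((n)\big)=(-1)^k$ and $\chi^{(n-k,1^k)}(1^n)=\binom{n-1}{k}$, and the Murnaghan--Nakayama evaluation $\chi^{(n-k,1^k)}\big((n-1,1)\big)=0$ for $1\le k\le n-2$ (with boundary values $1$ at $k=0$ and $(-1)^n$ at $k=n-1$) are all correct and give $(n-2)!\,(1-\eps(\lambda))$. Be aware, though, that the paper does not prove Theorem \ref{thm:boc} at all: it is quoted from Boccara \cite{boccara}, with a pointer to the bijective proof of Cori--Marcus--Schaeffer \cite{CMS}, and it is then used as the \emph{base case} of the inductive proof of Theorem \ref{ThmDependence} in Section \ref{SectInduction}; so your closing alternative ``specialise Theorem \ref{ThmDependence} to $a=1$'' is non-circular only if one invokes the character proof of Section \ref{SectCharacters}, not the inductive one. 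In substance your argument is exactly that Section \ref{SectCharacters} proof specialised to $a=1$, $\rho=(1)$ (Proposition \ref{PropForm3Char}, Lemma \ref{LemCharNCycle}, and a hook-character evaluation which you do by hand rather than through Lemma \ref{LemCharHookExplicit}), and what it buys is a short self-contained derivation with the constant pinned down, which the paper leaves to the literature. One caveat inherited from the statement rather than from your argument: the identities $z_{(n-1,1)}=n-1$ and $|C_{(n-1,1)}|=n(n-2)!$ require $n\ge 3$; at $n=2$ the connection coefficient equals $1$, not $2(n-2)!=2$, so the theorem (and your prefactor computation) should be read with $n\ge 3$.
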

A combinatorial proof of Theorem \ref{thm:boc} is given in Cori, Marcus and Schaeffer \cite{CMS}.

Theorem \ref{ThmDependence} can be established
using characters of the symmetric groups, which we do in Section \ref{SectCharacters}.

It is natural to look for a combinatorial explanation of it.
We provide two approaches to this question:
\begin{itemize}
    \item in Section \ref{SectBijection}, we give a purely bijective
        proof of Theorem \ref{ThmDependence} in the case $\rho=(1^a)$, extending the work of Cori \emph{et al.} \cite{CMS};
    \item in Section \ref{SectInduction}, we present an inductive proof
        (assuming Boccara's theorem),
        also for the case $\rho=(1^a)$, using only combinatorial arguments.
\end{itemize}
It seems that the case $\rho=(1^a)$ has a particular structure and
we shall refer to it as the {\em hook-case}.
Finding a combinatorial proof for a general partition $\rho$ remains an open problem.

Interestingly, each method allows to compute explicitly $\nu_{1^a}(\lambda)$
for small values of $a$ and leads to different expressions for $\nu_{1^a}(\lambda)$.
For instance, our inductive approach leads to a compact expression of $\nu_{1^a}(\lambda)$
in terms of symmetric functions; see Theorem \ref{ThmSym}.
In addition, the case where $\lambda$ has no parts of length smaller than $a$,
except fixed points, leads to a particularly compact and elegant formula
(proved in Section \ref{SubsectSmallValuesBijection}).
\begin{proposition}\label{PropA2New}
    Let $a \geq 2$ be an integer and $\sigma$ a permutation in $\Sym{n}$.
    Assume $\sigma$ has no cycles of length smaller than $a$,
    except fixed points (whose number is denoted by $b$).
    The number of ways to write $\sigma$ as the product of an $n$-cycle
    and an $(n-a)$-cycle is
    \[\big(1+(-1)^{a}\eps(\sigma)\big) \frac{(n-a-1)!(n-b)!}{a!(n-a-b+1)!}.\]
\end{proposition}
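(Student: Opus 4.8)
Write $\lambda$ for the cycle type of $\sigma$. The hypothesis says exactly that $m_1(\lambda)=b$ and $m_i(\lambda)=0$ for $2\le i\le a-1$, and the number we want is $\nu_{(1^a)}(\lambda)=c^{\lambda}_{(n-a,1^a),(n)}$, since an $(n-a)$-cycle of $\Sym{n}$ has type $(n-a)\cup(1^a)$; note also that $\eps\big((1^a)\big)=1$ because $|(1^a)|-\ell\big((1^a)\big)=0$. So the first move is to apply Theorem \ref{ThmDependence} with $\rho=(1^a)$: it yields
\[
\nu_{(1^a)}(\lambda)=\big(1+(-1)^a\eps(\sigma)\big)\,(n-a-1)!\;Z_{(1^a)}(n,b,0,\dots,0),
\]
and since the right-hand side of \eqref{EqDependence} depends on $\lambda$ only through $n$ and $m_1,\dots,m_{a-1}$, to prove the Proposition it suffices to evaluate $\nu_{(1^a)}$ at one permutation with these multiplicities. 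I would take $\sigma_0$ of cycle type $(n-b,1^b)$ — a single $(n-b)$-cycle together with $b$ fixed points — which does satisfy the hypothesis whenever $\sigma\neq\id$ (the case $\sigma=\id$, where both sides vanish, being immediate). Thus it remains to show that $\sigma_0$ is a product of an $n$-cycle and an $(n-a)$-cycle in precisely $\big(1+(-1)^a\eps(\sigma_0)\big)\frac{(n-a-1)!\,(n-b)!}{a!\,(n-a-b+1)!}$ ways.

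For this I would use the identity $|C_\lambda|\,c^\lambda_{\mu,\nu}=|C_\mu|\,c^\mu_{\lambda,\nu}$ to transfer the count onto a fixed $n$-cycle: since $|C_{(n)}|=(n-1)!$ and $|C_{(n-b,1^b)}|=(n-b)\,b!$, one gets
\[
\nu_{(1^a)}(\sigma_0)=c^{(n-b,1^b)}_{(n),(n-a,1^a)}=\frac{(n-1)!}{(n-b)\,b!}\;c^{(n)}_{(n-b,1^b),(n-a,1^a)},
\]
so everything reduces to counting the ways to write a fixed $n$-cycle $\gamma$ as a product of an $(n-b)$-cycle and an $(n-a)$-cycle. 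This is now amenable to a direct combinatorial treatment: because $\gamma$ has no fixed point, the supports of the two factors must together cover $[n]$, hence overlap in exactly $n-a-b$ points, and one can encode such a ``product of two cycles'' by the bijective machinery developed in Section \ref{SectBijection} in the wake of Cori, Marcus and Schaeffer \cite{CMS} — alternatively one may simply invoke Boccara's enumeration of products of two cycles of prescribed lengths \cite{boccara}. Either way one arrives at a closed form for $c^{(n)}_{(n-b,1^b),(n-a,1^a)}$; substituting it back and simplifying the factorials should reproduce the stated expression, with the prefactor $1+(-1)^a\eps(\sigma_0)$ accounting for the fact that such a factorisation exists only when $n-a-b$ is odd — the usual parity (genus/signature) obstruction.

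The arithmetic rearrangements and the parity bookkeeping are routine. \textbf{The main obstacle} is the middle step, the enumeration of factorisations of an $n$-cycle into two cycles: for arbitrary lengths this is genuinely a positive-genus counting problem. What makes the present instance tractable is that each factor is a single cycle surrounded by many fixed points, so that the $a$ fixed points of the $(n-a)$-cycle and the $b$ fixed points of the $(n-b)$-cycle can be inserted essentially independently onto a planar backbone; turning this intuition into a proof — and verifying that it yields exactly the factor $\binom{n-b}{a-1}(n-a-1)!/a$ and not something differing by a stray combinatorial coefficient — is where the work concentrates.
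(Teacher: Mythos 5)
Your argument has two genuine gaps, one structural and one of substance. The structural one is in the reduction to a single representative: Theorem~\ref{ThmDependence} determines $\nu_{1^a}$ through the sign-\emph{independent} polynomial $Z_{1^a}$, so you can only read off $Z_{1^a}(n,b,0,\dots,0)$ from a test partition $\lambda_0$ whose sign satisfies $(-1)^a\eps(\lambda_0)=1$; otherwise the prefactor $1+(-1)^a\eps(\lambda_0)$ vanishes, $\nu_{1^a}(\lambda_0)=0$, and the evaluation carries no information. Your representative $\lambda_0=(n-b,1^b)$ has $\eps(\lambda_0)=(-1)^{n-b-1}$, which agrees with $\eps(\sigma)=(-1)^{n-b-r}$ (where $r$ is the number of non-trivial cycles of $\sigma$) only when $r$ is odd. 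Concretely, for $n=10$, $a=2$ and $\sigma$ of type $(4,4,1,1)$ the Proposition asserts a nonzero count, but your test partition $(8,1,1)$ is odd while $(-1)^a\eps$ must be $+1$, so $c^{(10)}_{(8,1,1),(8,1,1)}=0$ and you learn nothing about $Z_{1^2}(10,2)$. Repairing this forces a representative of the correct parity, which in the bad cases has at least two long cycles, and then the symmetry $|C_\lambda|c^\lambda_{\mu,\nu}=|C_\mu|c^\mu_{\lambda,\nu}$ no longer reduces the problem to a product of two cycles.

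The second gap is that even where the parity is right, the decisive step --- a closed formula for $c^{(n)}_{(n-b,1^b),(n-a,1^a)}$ --- is exactly what you defer to ``where the work concentrates''; gesturing at Boccara or at the machinery of Section~\ref{SectBijection} without carrying out the count leaves the Proposition unproved, since that enumeration is essentially equivalent to the statement itself. For comparison, the paper obtains the Proposition as a one-line corollary of its bijective proof: for a $\sigma$ as in the hypothesis, the graphical representation consists of $b$ loops and cycles of length at least $a$, so a set of $a-1$ edges is acyclic precisely when it avoids the loops, whence $N_a(\sigma)=\binom{n-b}{a-1}$ in Lemma~\ref{LemNbSsGraphesAcycliques}; substituting this into the count $\big(1+(-1)^a\eps(\sigma)\big)\tfrac{(n-a-1)!}{a}N_a(\sigma)$ established at the end of Section~\ref{SectBijection} gives the stated formula directly. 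If you want a proof along your lines, you must actually prove the two-cycle factorisation count (with the correct parity bookkeeping), not assume it.
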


An advantage of combinatorial proofs is that they can often be used to refine enumerative results,
taking into account statistics that cannot be studied easily with the character approach.
We illustrate this fact by studying {\em separation probabilities}
in products of $n$-cycles with $(n-a)$-cycles 
(here, we can also deal with the case $a=0$).

The notion of separable permutations has been introduced in a recent paper
of Bernardi, Du, Morales and Stanley \cite{BernardiEtAlSeparation}.
The main question is to compute the probability that 
given elements are in the same or different cycles in
a product of uniform random permutations of given types
(see Section \ref{SectSeparability} for a formal definition).
For example, in Proposition \ref{Prop1KSep},
we give an explicit formula for the
following problem, formulated by Stanley in
\cite[pages 54-71]{StanleyTalkPP2010}:
what is the probability that $1,2,\dots,k$ lie in different cycles
of $\alpha \cdot \beta$, where $\alpha$ and $\beta$
are a random full cycle and an $(n-a)$-cycle, respectively, in $\Sym{n}$?
Our method is to establish an induction relation for separation probabilities
(Theorem \ref{ThmRecBonaPb}).
This formula is based on the inductive proof of the hook-case Theorem \ref{ThmDependence}.
\bigskip

Another advantage of the inductive method is that, in principle, 
it can be generalised to factorisation problems where we do not require any partition
to be $(n)$, although computations become more cumbersome.
We give two examples of this kind of results in Section \ref{sec:cycles}:
\begin{itemize}
    \item an explicit formula for the distribution of the number of cycles in the product of two cycles of given lengths (Theorem \ref{thm:maincount});
    \item an involved explicit formula for the separation probability for the
	    product of two $(n-1)$-cycles (Theorem \ref{thm:bigthm}). 
        This leads us to an appealing conjecture (Conjecture 
        \ref{ConjGenSymmetry}), extending a symmetry
        property proved by the four authors mentioned above, given in \cite[Eq. (1)]{BernardiEtAlSeparation}.
\end{itemize}



Our final comment is that our methods are
elementary and combinatorial.  With the exception of Section
\ref{SectCharacters} where we give our character explanation of Theorem
\ref{ThmDependence}, and Section \ref{sec:symfunc} where we give a result in the
ring of symmetric functions, we do not use algebraic tools or combinatorial
objects significantly different from factorisations in the symmetric group.\bigskip\bigskip

{\em Outline of the paper.}
Sections \ref{SectCharacters}, \ref{SectBijection} and \ref{SectInduction}
are devoted to the three approaches of Theorem \ref{ThmDependence}:
a representation-theoretic proof for general $\rho$,
a bijective proof for the hook-case,
and an inductive proof also for the hook-case, respectively.  In addition, at the
end of Section \ref{SectInduction}, we give a formula for a symmetric function
related to multiplying cycles in the symmetric groups.

In Section \ref{SectSeparability}, we compute the separation probability
of the product of a full cycle and a $(n-a)$-cycle.  

In Section \ref{sec:realinduction}, we present a number of lemmas 
needed for the inductive method.  These are then used to prove the theorems in
Section \ref{sec:cycles} mentioned above.


\section{Character explanation} \label{SectCharacters}
\subsection{Character values and connection coefficients}
Let us consider the family of symmetric groups $\Sym{n}$ (for each $n \ge 1$).
It is well known (see, {\em e.g.}, \cite[Chapter 2]{Sagan}) that
both conjugacy classes and irreducible representations of $\Sym{n}$ can
be indexed canonically by partitions of $n$,
so the character table of $\Sym{n}$ is a collection of numbers
$\chi^\lambda(\mu)$, where $\lambda$ and $\mu$ are taken over all partitions of $n$
and are, respectively, the indices of the irreducible representation
and the conjugacy class.

While the following formula is difficult to attribute to a particular author,
character values of symmetric groups are a classical tool to compute
connection coefficients of the symmetric group (see \cite[Lemma
3.3]{JacksonVisentiCharacter}).
\begin{proposition}
    For any triple of partitions $(\lambda, \mu, \nu)$ of the same integer positive $n$, one has that
    \begin{equation}
    c_{\mu,\nu}^\lambda = \frac{n!}{z_\mu z_\nu} \sum_{\pi \vdash n} 
    \frac{\chi^\pi(\lambda) \chi^\pi(\mu) \chi^\pi(\nu)}{\chi^\pi(1^n)},
\end{equation}
where $z_\mu$ is the integer $\prod_{i \ge 1} i^{m_i(\mu)} m_i(\mu)!$.
    \label{PropForm3Char}
\end{proposition}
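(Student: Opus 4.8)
The plan is to work inside the centre $Z(\CC[\Sym{n}])$ of the symmetric group algebra and to compare two of its standard bases: the conjugacy class sums and the central primitive idempotents. Write $K_\lambda := \sum_{\sigma \in C_\lambda} \sigma$ for the class sum of $C_\lambda$; as $\lambda$ runs over the partitions of $n$, the elements $K_\lambda$ form a basis of $Z(\CC[\Sym{n}])$. By the definition of the connection coefficients, the coefficient of a fixed $\sigma \in C_\lambda$ in the product $K_\mu K_\nu$ equals the number of pairs $(\alpha,\beta) \in C_\mu \times C_\nu$ with $\alpha\beta = \sigma$; since $K_\mu K_\nu$ is again central this coefficient depends only on the class of $\sigma$, so
\[
K_\mu K_\nu = \sum_{\lambda \vdash n} c^\lambda_{\mu,\nu}\, K_\lambda ,
\]
and it suffices to compute the left-hand side, which I would do by passing to the idempotent basis.

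Next I would bring in the central primitive idempotents $e_\pi$, indexed by the irreducible representations $\pi \vdash n$, which satisfy $e_\pi e_{\pi'} = \delta_{\pi,\pi'} e_\pi$ and $\sum_\pi e_\pi = 1$. Two change-of-basis formulas are needed. First, the classical expression $e_\pi = \frac{\chi^\pi(1^n)}{n!} \sum_{g \in \Sym{n}} \chi^\pi(g^{-1})\, g$; since in $\Sym{n}$ every element is conjugate to its inverse one has $\chi^\pi(g^{-1}) = \chi^\pi(g)$, and grouping the $g$ by cycle type gives
\[
e_\pi = \frac{\chi^\pi(1^n)}{n!} \sum_{\lambda \vdash n} \chi^\pi(\lambda)\, K_\lambda .
\]
Second, each central element $K_\lambda$ acts on the irreducible module indexed by $\pi$ as multiplication by the scalar $\omega_\pi(K_\lambda) = |C_\lambda|\,\chi^\pi(\lambda)/\chi^\pi(1^n)$ (compare traces on a module of dimension $\chi^\pi(1^n)$); multiplying $K_\lambda$ by $1 = \sum_\pi e_\pi$ then gives
\[
K_\lambda = \sum_{\pi \vdash n} \frac{|C_\lambda|\,\chi^\pi(\lambda)}{\chi^\pi(1^n)}\, e_\pi .
\]

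Then I would simply combine these. Expanding $K_\mu$ and $K_\nu$ in the $e_\pi$, using $e_\pi e_{\pi'} = \delta_{\pi,\pi'} e_\pi$, and re-expanding each $e_\pi$ in the class-sum basis,
\[
K_\mu K_\nu = \sum_{\pi \vdash n} \omega_\pi(K_\mu)\, \omega_\pi(K_\nu)\, e_\pi = \frac{|C_\mu|\,|C_\nu|}{n!} \sum_{\lambda \vdash n} \Biggl( \sum_{\pi \vdash n} \frac{\chi^\pi(\lambda)\chi^\pi(\mu)\chi^\pi(\nu)}{\chi^\pi(1^n)} \Biggr) K_\lambda .
\]
Comparing the coefficient of $K_\lambda$ with the product rule above yields $c^\lambda_{\mu,\nu} = \frac{|C_\mu|\,|C_\nu|}{n!} \sum_{\pi} \chi^\pi(\lambda)\chi^\pi(\mu)\chi^\pi(\nu)/\chi^\pi(1^n)$, and substituting $|C_\mu| = n!/z_\mu$ and $|C_\nu| = n!/z_\nu$ gives exactly the stated identity.

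I do not expect a genuine obstacle: once the two idempotent formulas are in hand, the argument is pure bookkeeping in the centre of the group algebra. The only points deserving care are precisely those two formulas — in particular the (harmless, symmetric-group-specific) replacement of $g^{-1}$ by $g$ inside character values, and the evaluation of the central character $\omega_\pi$ — for which one may instead simply cite a standard reference such as \cite[Chapter 2]{Sagan}.
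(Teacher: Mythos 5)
Your argument is correct: the expansion $K_\mu K_\nu=\sum_\lambda c^\lambda_{\mu,\nu}K_\lambda$, the central characters $\omega_\pi(K_\lambda)=|C_\lambda|\chi^\pi(\lambda)/\chi^\pi(1^n)$, the idempotent formula $e_\pi=\frac{\chi^\pi(1^n)}{n!}\sum_{\lambda\vdash n}\chi^\pi(\lambda)K_\lambda$ (using that $\chi^\pi(g^{-1})=\chi^\pi(g)$ in $\Sym{n}$), and the substitution $|C_\mu|=n!/z_\mu$ fit together exactly as you say, and the bookkeeping yields the stated identity. The paper itself offers no proof of this proposition --- it is quoted as a classical fact with a reference --- and what you have written is precisely the standard argument behind that classical fact, so there is nothing to reconcile between your route and the paper's.
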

This proposition has been widely used in the last thirty years to obtain explicit 
expressions for connection coefficients,
especially in the case where one 
of the partitions $\lambda$, $\mu$ or $\nu$ is $(n)$;
see {\em e.g.} \cite{GoupilSchaefferStructureCoef} and references therein.

\subsection{Lemmas on character values}

 Two common methods of computing the characters of the symmetric group are found in
 the literature. One method is to use a change of basis in the symmetric function ring 
\cite{FrobeniusRepresentationSn}, and the second is the 
Murnaghan-Nakayama combinatorial rule,  which is in fact due
to Littlewood and Richardson \cite[§ 11]{LittlewoodRichardsonCharactersSn} (for
a modern treatment see \cite{Sagan} or \cite[§ 1.7]{Macdonald}).

The following lemma is already well known.
\begin{lemma}
    Let $\pi$ be a partition of $n$, one has:
       \[ \chi^\pi((n))=0,\]
    unless $\pi$ is a hook, that is $(n-i,1^i)$ for some $i$ between $0$ and $n-1$.
    \label{LemCharNCycle}
\end{lemma}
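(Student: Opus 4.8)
The plan is to invoke the Murnaghan--Nakayama rule (the combinatorial rule for computing $\chi^\pi(\mu)$ attributed above to Littlewood and Richardson). Recall that this rule writes $\chi^\pi(\mu)$ as a signed sum over all ways of building up the Young diagram of $\pi$ from the empty diagram by successively adding border strips (rim hooks) of sizes $\mu_1,\mu_2,\dots$ in order, each such filling contributing a sign $(-1)^{\sum_j \height(\xi_j)}$, where $\xi_1,\xi_2,\dots$ are the border strips used and $\height(\xi)$ is one less than the number of rows that $\xi$ occupies.

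First I would specialise to $\mu=(n)$, which has a single part. A nonzero term in the Murnaghan--Nakayama expansion then requires the Young diagram of $\pi$ to be obtainable from the empty diagram by adding a single border strip of size $n$; equivalently, the diagram of $\pi$ must itself be one border strip, i.e.\ a connected (straight) shape containing no $2\times 2$ block of cells. If no such filling exists the character value is $0$.

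The second step is the elementary observation that a straight shape $\pi$ with this property is precisely a hook. Indeed, if $\pi_2\geq 2$ then the four cells $(1,1),(1,2),(2,1),(2,2)$ all lie in the diagram, contradicting the no-$2\times 2$ condition; hence $\pi_2\leq 1$, which forces $\pi=(n-i,1^i)$ for some $0\leq i\leq n-1$. Conversely every hook is connected and visibly contains no $2\times 2$ square. This establishes the asserted vanishing, and for completeness one reads off the value in the remaining case: the border strip $(n-i,1^i)$ occupies $i+1$ rows, so $\height=i$ and the rule gives $\chi^{(n-i,1^i)}((n))=(-1)^i$.

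I do not expect a genuine obstacle here; the only points requiring care are quoting the Murnaghan--Nakayama rule in exactly the form above and the one-line combinatorial lemma identifying border strips among straight shapes. An equally short alternative route, avoiding the rule, is to use the classical Schur expansion of the power sum $p_n=\sum_{i=0}^{n-1}(-1)^i\,s_{(n-i,1^i)}$ together with $p_n=\sum_{\pi\vdash n}\chi^\pi((n))\,s_\pi$, from which both the vanishing and the explicit values $\chi^{(n-i,1^i)}((n))=(-1)^i$ follow immediately by comparing coefficients.
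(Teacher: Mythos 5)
Your proof is correct and follows exactly the route the paper takes: the paper dismisses this lemma as an immediate consequence of the Murnaghan--Nakayama rule, and your argument (a nonzero term for $\mu=(n)$ forces the whole diagram of $\pi$ to be a single border strip, which for a straight shape means no $2\times 2$ block and hence a hook) is precisely the standard justification of that remark. Your extracted value $\chi^{(n-i,1^i)}((n))=(-1)^i$ also matches what the paper uses later in the proof of Theorem \ref{ThmDependence}.
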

This is an immediate consequence of Murnaghan-Nakayama rule and its use can found in almost all
of the aforementioned articles that use Proposition ~\ref{PropForm3Char} to determine
$c^{\lambda}_{\mu, \nu}$.
A consequence of Lemma \ref{LemCharNCycle} is that by setting $\mu=(n)$ in
Proposition \ref{PropForm3Char}, the sum over all partitions of $n$ reduces to a
sum over hooks, and thus a
simple sum of $i$ from $0$ to $n-1$, which is usually much easier to handle.

When $\nu=(n-a) \cup \rho$, where $\rho$ is a partition
of a fixed integer $a$, the sum also simplifies due to the following lemma.
\begin{lemma}
    Let $i$ and $n$ be two integers with $0 \le i <n$, and $\pi$ be the hook partition $(n-i,1^i)$.
    Also, let $a<n$ be a positive integer and $\rho$ a partition of $a$.
    Assume further $a-1 < i <n-a$. Then,
    \[ \chi^\pi \big( (n-a) \cup \rho \big) = 0.\]
    \label{LemCharHookNmaCycle}
\end{lemma}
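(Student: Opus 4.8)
The plan is to invoke the Murnaghan--Nakayama rule, peeling the part of size $n-a$ off the cycle type $(n-a)\cup\rho$:
\[
\chi^\pi\big((n-a)\cup\rho\big)=\sum_{\xi}(-1)^{\height(\xi)}\,\chi^{\pi\setminus\xi}(\rho),
\]
where the sum runs over all border strips (rim hooks) $\xi$ of $\pi=(n-i,1^i)$ of size $n-a$ such that $\pi\setminus\xi$ is again a partition. Each summand is a perfectly well-defined character value of $\Sym{a}$ (since $\pi\setminus\xi$ is a partition of $a$ and $\rho\vdash a$), so it suffices to prove that, under the hypothesis $a-1<i<n-a$ — equivalently $a\le i\le n-a-1$ — the hook $\pi$ admits \emph{no} removable border strip of size $n-a$; then the sum on the right is empty and the lemma follows.

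To see this, note that the cells of $\pi$ other than the corner $(1,1)$ fall into two connected arms: the row arm $(1,2),(1,3),\dots,(1,n-i)$, of length $n-i-1$, and the column arm $(2,1),(3,1),\dots,(i+1,1)$, of length $i$. A border strip is connected, so a border strip $\xi$ avoiding $(1,1)$ lies entirely inside one arm, and for $\pi\setminus\xi$ to remain a partition $\xi$ must be a final segment of that arm; hence such a $\xi$ has size at most $\max(n-i-1,\,i)$. On the other hand, if a removable border strip $\xi$ contains $(1,1)$, then $\pi\setminus\xi$ is a partition not containing $(1,1)$, which forces $\pi\setminus\xi=\emptyset$ and $\xi=\pi$, of size $n$. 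So every removable border strip of $\pi$ has size $\le\max(n-i-1,\,i)$ or equal to $n$. (Equivalently, one may quote the standard fact that the sizes of the removable rim hooks of a partition are its hook lengths, and the hook lengths of $(n-i,1^i)$ are $n$ together with $1,\dots,n-i-1$ and $1,\dots,i$.)

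It then remains to check that $n-a$ matches neither possibility. Since $a\ge 1$, $n-a<n$; since $i\ge a$, $n-i-1\le n-a-1<n-a$; and since $i\le n-a-1$, $i<n-a$. Hence $n-a>\max(n-i-1,\,i)$ and $n-a\ne n$, so $\pi$ has no removable border strip of size $n-a$, the Murnaghan--Nakayama sum is empty, and $\chi^\pi\big((n-a)\cup\rho\big)=0$. The only point requiring any care is the elementary observation that a border strip of a hook avoiding the corner cell must be a final segment of one of its two arms; with $a\le i\le n-a-1$ both arms are nonempty, but the bound $\max(n-i-1,i)$ is all that the argument actually uses.
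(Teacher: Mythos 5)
Your proof is correct and follows essentially the same route as the paper: apply the Murnaghan--Nakayama rule and observe that the hook $(n-i,1^i)$ has no removable border strip of size $n-a$ under the hypothesis $a-1<i<n-a$, since such a strip would have to be a final segment of the row arm (size $\le n-i-1$), a final segment of the column arm (size $\le i$), or the whole diagram (size $n$). The only difference is that you justify this trichotomy in more detail (equivalently via hook lengths) than the paper does.
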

\begin{proof}
    The Murnaghan-Nakayama rule implies that 
    $ \chi^\pi \big( (n-a) \cup \rho \big) = 0$,
    unless we can find a ribbon $\xi$ of size $n-a$ in the diagram of $\pi$,
    given by
    \[\begin{tikzpicture}[auto]
        \draw (0,.3)--(3,.3)--(3,0)--(0,0)--(0,2.1)--(.3,2.1)--(.3,0);
        \draw[decorate, decoration={brace}] (3,-.2)--(.3,-.2) node[below,midway] {$n-i-1$};
        \draw[decorate, decoration={brace}] (-.2,.3)--(-.2,2.1) node[left,midway] {$i$};
    \end{tikzpicture},\]
    so that $\pi$ with $\xi$ removed remains a Young diagram.
    As $a>0$, the ribbon is not the whole diagram.
    Hence it is either in the first row (which is possible only if $n-a \le n-i-1$)
    or in the first column (which is possible only if $n-a \le i$).
    If neither of these conditions is satisfied, there is no ribbon of size $(n-a)$
    in the diagram of $\pi$ and the corresponding character value vanishes,
    as asserted.
\end{proof}

The last lemma we need is due to Littlewood \cite[p. 139]{LittlewoodBookRepresentation}.  See also \cite[Lemma 2.2]{StanleyFactorizationIntoNCycles}:
\begin{lemma}
    Let $i$ and $n$ be two integers with $0 \le i <n$,
    $\lambda$ a partition of $n$, and $\pi$ the hook partition $(n-i,1^i)$.
    Then 
    \[\chi^\pi(\lambda)=\sum_{\rho \vdash i} (-1)^{r_2 +r_4 +\dots}
    \binom{m_1-1}{r_1} \binom{m_2}{r_2} \dots \binom{m_i}{r_i},\]
    \label{LemCharHookExplicit}
    where $m_j$ (respectively $r_j$) is the multiplicity of $j$ in $\lambda$
    (respectively $\rho$) for $1 \leq j \leq i$.
\end{lemma}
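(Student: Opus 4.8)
The plan is to pass to the ring of symmetric functions and extract the character value from a single generating function. Recall the Frobenius transition $p_\mu=\sum_{\pi\vdash n}\chi^\pi(\mu)\,s_\pi$, so by orthonormality of the Schur functions one has $\chi^\pi(\lambda)=\langle p_\lambda,s_\pi\rangle$. The starting point is the classical expansion of a hook Schur function into complete and elementary symmetric functions (a special case of the Jacobi--Trudi identity, or a short consequence of Pieri's rule):
\[ s_{(n-i,1^i)}=\sum_{k=0}^{i}(-1)^k\,h_{n-i+k}\,e_{i-k}. \]
For $i\ge n$ the same right-hand side vanishes, by the standard relation $\sum_{k}(-1)^k h_k e_{m-k}=0$; this is exactly what will make the generating function below a polynomial in $q$ of degree $n-1$.

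First I would assemble the bivariate generating function. Writing $H(z)=\sum_m h_m z^m$ and $E(z)=\sum_m e_m z^m$, multiplying the displayed identity by $q^i z^n$ and re-indexing via $m=n-i+k$, $\ell=i-k$ gives
\[ \sum_{n\ge1}\sum_{i=0}^{n-1}s_{(n-i,1^i)}\,q^i z^n=H(z)\,E(qz)\sum_{k\ge0}(-q)^k=\frac{H(z)\,E(qz)}{1+q}. \]
Now apply $\langle p_\lambda,-\rangle$. Using $H(z)=\exp\bigl(\sum_{r\ge1}p_r z^r/r\bigr)$, $E(qz)=\exp\bigl(\sum_{r\ge1}(-1)^{r-1}p_r(qz)^r/r\bigr)$, together with $\langle p_\lambda,p_\mu\rangle=z_\lambda\delta_{\lambda\mu}$ and $z_\lambda=\prod_r r^{m_r}m_r!$, a one-line computation yields $\langle p_\lambda,H(z)E(qz)\rangle=z^{\,n}\prod_{r\ge1}\bigl(1+(-1)^{r-1}q^r\bigr)^{m_r}$, where $n=|\lambda|$ and $m_r=m_r(\lambda)$. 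Extracting the coefficient of $z^n$ and cancelling the factor $(1+q)^{m_1}$ coming from $r=1$ against the denominator, I obtain the compact identity
\[ \sum_{i=0}^{n-1}\chi^{(n-i,1^i)}(\lambda)\,q^i=(1+q)^{m_1-1}\prod_{r\ge2}\bigl(1-(-q)^r\bigr)^{m_r}. \]

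Finally I would extract the coefficient of $q^i$. Expanding each factor by the binomial theorem, a part of size $r\ge2$ taken with multiplicity $r_r$ contributes $\binom{m_r}{r_r}$ times a sign which is $(-1)^{r_r}$ when $r$ is even and $+1$ when $r$ is odd, since $(1-(-q)^r)^{m_r}=\sum_{t}\binom{m_r}{t}(-1)^{t(r+1)}q^{rt}$; the parts of size $1$ contribute $\binom{m_1-1}{r_1}$ with no sign. Collecting all tuples with $\sum_r r\,r_r=i$, that is, summing over partitions $\rho\vdash i$ with $r_r$ parts equal to $r$, reproduces precisely the asserted formula. The main point requiring care is the sign bookkeeping just described, together with the edge case $m_1(\lambda)=0$: there $\binom{m_1-1}{r_1}$ must be read as the generalized binomial $\binom{-1}{r_1}=(-1)^{r_1}$, and one checks that $1+q$ still divides $\prod_{r\ge2}(1-(-q)^r)^{m_r}$ (each factor vanishes at $q=-1$), so the right-hand side stays a polynomial. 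An alternative, purely combinatorial route goes through the Murnaghan--Nakayama rule: processing the parts of $\lambda$ in increasing order, a border strip removed from a hook either lies in its arm (sign $+1$, shortening the arm), or in its leg (sign $(-1)^{\mathrm{size}-1}$, shortening the leg), or is the entire hook, and an induction on $\ell(\lambda)$ gives the same recursion; but its sign bookkeeping is heavier, so I would present the generating-function argument as the main proof.
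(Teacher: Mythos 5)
Your proof is correct. One point of comparison: the paper does not prove Lemma~\ref{LemCharHookExplicit} at all --- it is quoted as a known result of Littlewood, with a pointer to Stanley's Lemma~2.2 --- so you are supplying an argument where the paper gives only a citation. Your route, via $\chi^\pi(\lambda)=\langle p_\lambda,s_\pi\rangle$, the hook expansion $s_{(n-i,1^i)}=\sum_k(-1)^k h_{n-i+k}e_{i-k}$, and the resulting identity $\sum_{i=0}^{n-1}\chi^{(n-i,1^i)}(\lambda)\,q^i=(1+q)^{m_1-1}\prod_{r\ge2}\bigl(1-(-q)^r\bigr)^{m_r}$, is essentially the classical symmetric-function derivation behind Littlewood's formula (and close in spirit to Stanley's treatment), rather than the Murnaghan--Nakayama induction you sketch as an alternative; it also fits the paper, whose neighbouring lemmas are proved with the same Frobenius/MN toolkit. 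Two small points to tidy up. First, the re-indexed generating function is not exactly $H(z)E(qz)/(1+q)$: besides the vanishing of the terms with $i\ge n$ (which you correctly invoke), the unrestricted triple sum over $(m,\ell,k)$ also contains the $n=0$ terms, so the identity should read $H(z)E(qz)/(1+q)=\tfrac{1}{1+q}+\sum_{n\ge1}\sum_{i=0}^{n-1}s_{(n-i,1^i)}q^i z^n$; this is harmless, since pairing with $p_\lambda$ for $|\lambda|=n\ge1$ annihilates the constant term, but it should be stated. Second, the $m_1=0$ convention you flag is not a deviation from the statement but exactly how the statement must be read (for instance $\lambda=(2,2)$, $i=2$ requires $\binom{-1}{2}=1$ to recover $\chi^{(2,1,1)}\bigl((2,2)\bigr)=-1$), so your reading $\binom{-1}{r_1}=(-1)^{r_1}$ agrees with Littlewood's and Stanley's formulation; it would be worth saying so explicitly.
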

\begin{corollary}
	With the notation of Lemma \ref{LemCharHookExplicit} above, if we fix the sign $\eps(\lambda)$,
    then $\chi^\pi(\lambda)$ is a polynomial  in
    the multiplicities $m_j$ for $j \leq i, n-i-1$.
\end{corollary}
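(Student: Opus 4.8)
The plan is to read the result almost directly off Lemma \ref{LemCharHookExplicit}, treating the regimes $i \le n-i-1$ and $i > n-i-1$ separately and passing to the conjugate hook in the second.

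\emph{Step 1: the case $i \le n-i-1$.} Here I claim that the formula of Lemma \ref{LemCharHookExplicit} already exhibits $\chi^\pi(\lambda)$ as a polynomial in $m_1,\dots,m_i$, with no hypothesis on the sign needed. Indeed, the outer sum runs over the \emph{finite} set of partitions $\rho$ of the fixed integer $i$; for each such $\rho$ the multiplicities $r_j$ are bounded and vanish for $j>i$, and each factor $\binom{m_1-1}{r_1}$, $\binom{m_j}{r_j}$ is, as a function of the single integer $m_j$, the restriction of a polynomial of degree $r_j$. Hence $\chi^\pi(\lambda)$ coincides with a fixed polynomial in $m_1,\dots,m_i$ on all admissible arguments.

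\emph{Step 2: the case $i > n-i-1$.} Let $\pi'$ be the conjugate (transposed) partition of $\pi=(n-i,1^i)$. Being the transpose of a hook, $\pi'$ is again a hook, and one checks that $\pi'=(i+1,1^{n-i-1})=\big(n-(n-i-1),1^{n-i-1}\big)$, whose leg length $n-i-1$ is strictly smaller than $i$. Applying Step 1 to $\pi'$ shows that $\chi^{\pi'}(\lambda)$ is a polynomial in $m_1,\dots,m_{n-i-1}$. Now invoke the classical identity $\chi^{\pi'}(\lambda)=\eps(\lambda)\,\chi^{\pi}(\lambda)$ --- tensoring the irreducible representation indexed by $\pi$ with the sign representation transposes its Young diagram (see, e.g., \cite{Macdonald}). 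Once $\eps(\lambda)\in\{+1,-1\}$ is fixed, this reads $\chi^{\pi}(\lambda)=\eps(\lambda)\,\chi^{\pi'}(\lambda)$, which is $\pm$ the polynomial just found, hence itself a polynomial in $m_1,\dots,m_{n-i-1}$.

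Putting the two steps together, $\chi^\pi(\lambda)$ is a polynomial in the multiplicities $m_j$ with $j\le\min(i,n-i-1)$, as asserted. The one point requiring a little care --- and the nearest thing to an obstacle --- is purely a matter of phrasing: the $m_j$ are not free variables but constrained non-negative integers (with $\sum_j j\,m_j=n$), so the statement should be read as the existence of a polynomial in the indicated variables agreeing with $\chi^\pi(\lambda)$ on all admissible multiplicity vectors, and the two case analyses agree on their overlap automatically, since both compute the same integer.
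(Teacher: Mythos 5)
Your proof is correct and follows essentially the same route as the paper: the case $i \le n-i-1$ is read directly off the explicit formula of Lemma \ref{LemCharHookExplicit}, and the case $i > n-i-1$ is reduced to it via the conjugation identity $\chi^\pi(\lambda)=\eps(\lambda)\chi^{\pi'}(\lambda)$ with $\pi'=(i+1,1^{n-i-1})$. Your additional remarks (that the sign is not needed in the first case, and that the $m_j$ are constrained integers so the claim is about a polynomial agreeing on admissible values) only make explicit what the paper leaves implicit.
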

\begin{proof}
    The case $i \leq n-i-1$ comes from the explicit expression above.
    The case $i >n-i-1$ follows from the symmetry formula, found in 
    \cite[§1.7, Example 2]{Macdonald}, for characters
    \[\chi^\pi(\lambda)= \epsilon(\lambda) \chi^{\pi'}(\lambda),\]
    where $\pi'=(i+1,1^{n-i-1})$ is the conjugate partition of $\pi$.
\end{proof}

We now have the necessary tools to give a proof of Theorem \ref{ThmDependence}.

\subsection{Proof of Theorem \ref{ThmDependence}}

Fix a partition $\rho$ of a positive integer $a$.
By Proposition~\ref{PropForm3Char}, for any partition $\lambda$ of size $n$
bigger than $2a$,
one has
\[
    \nu_\rho(\lambda)=c^\lambda_{(n),(n-a)\cup \rho}= 
    \frac{n!}{n\cdot (n-a) \cdot z_\rho} \sum_{\pi \vdash n}
    \frac{\chi^\pi(\lambda) \chi^\pi\big( (n)\big) 
    \chi^\pi\big( (n-a)\cup \rho\big)}{\chi^\pi(1^n)}.
\]
By Lemmas~\ref{LemCharNCycle} and \ref{LemCharHookNmaCycle}, 
the only non-zero terms of the sum correspond to hook partition 
$\pi=(n-i,1^i)$, where $i$ is an integer which fulfils either
$ i\le a-1$ or $i \ge n-a$.

But, for such partitions $\pi$, the dimension $\chi^\pi(1^n)$
is $\binom{n-1}{i}$, and a simple use of the Murnaghan-Nakayama rule shows that
$\chi^\pi( (n))= (-1)^i$.  Another application Murnaghan-Nakayama rule
gives:
\begin{itemize}
    \item for $i \le a-1$, one has that 
$\chi^\pi \big( (n-a) \cup \rho \big)= \chi^{\hat{\pi}}(\rho)$;
where $\hat{\pi}=(a-i,1^i)$ because the only ribbon of size $n-a$
is contained in the first row of $\pi$.
   \item for $i \ge n-a$, one has that
       $\chi^\pi \big( (n-a) \cup \rho \big)= (-1)^{n-a-1} \chi^{\hat{\pi}}(\rho)$
       where $\hat{\pi}=(n-i,1^{i-n+a})$ because the only ribbon of size $n-a$                 
       is contained in the first column of $\pi$.
\end{itemize}
By Lemma ~\ref{LemCharHookExplicit},
$\chi^\pi(\lambda)$ depends on $\eps(\lambda)$ and in a polynomial way on
$m_1(\lambda)$, \dots, $m_{a-1}(\lambda)$.
For each $i \le a-1$ or $i \ge n-a$, the quotient
\[\frac{n!}{n(n-a) \binom{n-1}{i} }\]
is $(n-a-1)!$ multiplied by a polynomial in $n$,
so this completes the proof of Theorem \ref{ThmDependence}.
\qed

\subsection{Small values} \label{SubsectSmallValuesCharacters}
Using Lemma \ref{LemCharHookExplicit}, we can also obtain explicit expressions.
For example,
using the following notation for falling powers
\[ (n)_{m} := n(n-1) \cdots (n-m+1),\]
 for small values of $a$, we get:
\begin{align*}
    \nu_{1^2} (\lambda)&= (1+\eps(\lambda))(n-3)! \frac{n-m_1(\lambda)}{2} ;\\
    \nu_{2} (\lambda) &= (1-\eps(\lambda))(n-3)! \frac{n+m_1(\lambda)-2}{2} ;\\
    \nu_{1^3} (\lambda) &= (1-\eps(\lambda))(n-4)! \frac{1}{6}
    \big( ( n-1)_2 -2(m_1-1)(n-2) +(m_1-1)_2 - 2m_2 \big).
\end{align*}

\section{Bijective approach}\label{SectBijection}
Fix a permutation $\sigma \in \Sym{n}$ of type $\lambda \vdash n$ with the same parity as $a$.
In this 
section, we will prove bijectively Theorem \ref{ThmDependence} in the case $\rho =
1^a$;  that is, up to a factor $(n-a-1)!$, the number of ways to write
$\sigma$ as a product $\alpha \cdot \beta$, where $\alpha$ is a $n$-cycle and $\beta$ a $(n-a)$-cycle
depends polynomially on \[n, m_1(\lambda), \cdots, m_{a-1}(\lambda) \]
and not on the higher multiplicities of $\lambda$.
The proof is a generalisation of the argument given in \cite{CMS}
in the case $a=1$.\medskip

Note that $\alpha=\sigma\cdot \beta^{-1}$ is entirely determined by $\beta$
(recall that $\sigma$ is fixed),
hence the question can be reformulated as follows:
count the number of $(n-a)$-cycles $\beta$ such that $\sigma\cdot \beta^{-1}$
is a full cycle.

\begin{example}\label{ExRunningDef}
    Take the partition $\lambda=(3,2,2)$ of $n=7$ and $a=2$.
    We fix $\sigma=(1\ 2\ 3) (4\ 5) (6\ 7)$. 
    Then the $5$-cycle $\beta=(1\ 3\ 7\ 5\ 2)$ fulfils the condition above.
    Indeed, $\alpha=\sigma\cdot \beta^{-1}=(1\ 3\ 2\ 4\ 5\ 6\ 7)$ is a full
    cycle.
\end{example}

\subsection{A necessary and sufficient condition}
An $(n-a)$-cycle $\beta$ can be written as follows
\begin{equation}
\label{EqBetaB}
\beta=(b_1) (b_2) \cdots (b_{a-1}) (b_a \cdots b_{n-1}), 
\end{equation}
where the $b_i$ are distinct integers between $1$ and $n$.
While it may seem strange to write explicitly $a-1$ fixed
points of $\beta$ in equation \eqref{EqBetaB} instead of all of them,
this is central to our construction.

\begin{example}[Continuing Example \ref{ExRunningDef}]
For $\beta=(1\ 3\ 7\ 5\ 2)$, we can choose $b_1=4$, $b_2=1$, $b_3=3$,
$b_4=7$, $b_5=5$ and $b_6=2$.
This is of course not the only possible choice.
\label{ExRunningBi}
\end{example}

We would like $\alpha=\sigma\cdot \beta^{-1}$ to be a $n$-cycle.
To ensure this condition, we look at the {\em graphical representation}
of $\alpha$.
By definition, it is the directed graph with vertex set $[n]$ and directed edges
$(i,\alpha(i))$ for $i \in [n]$.
This graph is always a union of cycles, corresponding to the cycles of $\alpha$.

\begin{example}\label{ExGraphRep}
The graphical representation of $\sigma=(1\ 2\ 3) (4\ 5) (6\ 7)$
is the left-most graph of Figure \ref{Fig4Graphes}.
\end{example}

The $n-2$ oriented edges 
\begin{equation}\label{EqEdges}
\begin{cases}
b_i \longrightarrow \sigma(b_i) & \text{ for }1 \le i \le a-1 \\
b_i \longrightarrow \sigma(b_{i-1}) & \text{ for }a+1 \le i \le n-1
\end{cases}
\end{equation}
are some of the edges of the graphical representation of permutation $\alpha$.

\begin{example}[Continuing Example \ref{ExRunningBi}]
    \label{ExRunningEdges}
    Choose $b_i$ as above (for $i=1,\dots,6$).
    Then the edges described in 
    \eqref{EqEdges} are drawn on the middle-left graph of Figure 
    \ref{Fig4Graphes}.
They clearly form a subgraph of the graphical representation of
$\alpha=(1\ 3\ 2\ 4\ 5\ 6\ 7)$, which is the right-most graph in Figure
\ref{Fig4Graphes} (for now, disregard the colouring).
\end{example}

The following lemma is an extension of \cite[Proposition 1]{CMS}.

\begin{lemma}\label{LemLehmanSeqToFacto}
Fix an integer $a$ and a permutation $\sigma$ of the same parity as $a$.
Let $b_1$, \dots, $b_{n-1}$ be distinct integers between $1$ and $n$ and
let $\beta$ be the permutation defined by equation~\eqref{EqBetaB}.
Set also $\alpha=\sigma \beta^{-1}$.
Then $\alpha$ is a $n$-cycle if and only if
the edges~\eqref{EqEdges} form an acyclic set of edges.
\end{lemma}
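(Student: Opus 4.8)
The plan is to analyze the graphical representation of $\alpha = \sigma\beta^{-1}$ directly, comparing it with the graphical representation of $\sigma$ and tracking exactly how the edges in~\eqref{EqEdges} are obtained. Since $\beta$ has the specific form~\eqref{EqBetaB} with $a-1$ marked fixed points $b_1,\dots,b_{a-1}$ and one long cycle $(b_a\cdots b_{n-1})$, the permutation $\beta^{-1}$ acts as the identity on $b_1,\dots,b_{a-1}$ and on the unique element of $[n]$ not among $b_1,\dots,b_{n-1}$, while on the remaining elements $\beta^{-1}(b_i)=b_{i-1}$ for $a+1\le i\le n-1$ (cyclically, with $\beta^{-1}(b_a)=b_{n-1}$). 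From this one computes $\alpha(x)=\sigma(\beta^{-1}(x))$ case by case: for $x=\sigma(b_i)$ with $1\le i\le a-1$ we recover the first family of edges, for $x=b_i$ with $a+1\le i\le n-1$ we recover the second family, and there is exactly one further edge (coming from the fixed point of $\beta$ not written in~\eqref{EqBetaB}) plus the edge out of $b_a$. So the full edge set of $\alpha$'s graphical representation consists of the $n-2$ edges in~\eqref{EqEdges} together with exactly two more edges, which I would identify explicitly.

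First I would set up this bookkeeping carefully and observe that the $n-2$ edges in~\eqref{EqEdges} have all of $[n]$ as their set of tails except for two vertices (the "missing" tails), and likewise all of $[n]$ as their set of heads except for two vertices. Then $\alpha$ being a single $n$-cycle is equivalent to its graphical representation (a union of directed cycles on $n$ vertices with $n$ edges) being connected. Adding the two extra edges to the $n-2$ edges of~\eqref{EqEdges} completes each vertex to out-degree and in-degree $1$. The standard fact here is: a set of $n-2$ edges on $n$ vertices in which every vertex has out-degree $\le 1$ and in-degree $\le 1$ extends (uniquely, by the two forced edges) to a single $n$-cycle if and only if the $n-2$ edges form an acyclic set — i.e. contain no directed cycle. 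Indeed, if the $n-2$ edges are acyclic they form a disjoint union of exactly two directed paths (since two tails and two heads are missing), and the two completing edges can only join these two paths into one cycle precisely when no shorter cycle was already closed off; conversely if the $n-2$ edges contain a directed cycle, that cycle is a connected component of $\alpha$ of length $<n$, so $\alpha$ is not an $n$-cycle.

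I would then make the "if" direction precise: assuming~\eqref{EqEdges} is acyclic, I identify the two missing heads and two missing tails, check that neither completing edge is forced to be a loop, and verify the two paths get spliced into a single cycle — here the hypothesis that $\sigma$ and $a$ have the same parity (equivalently that $\alpha=\sigma\beta^{-1}$ is even, since $\beta$ is an $(n-a)$-cycle hence has sign $(-1)^{n-a-1}$) is used to rule out the degenerate splicing that would produce two cycles, exactly as in the $a=1$ case of \cite[Proposition 1]{CMS}. The main obstacle, and the step requiring the most care, is precisely this parity/endpoint analysis: one must pin down which two vertices fail to be tails and which two fail to be heads among the edges~\eqref{EqEdges}, and show that the two forced closing edges reconnect the two paths into one $n$-cycle exactly under the stated parity hypothesis — the rest is the routine translation between "single $n$-cycle" and "connected functional graph." I would organize the argument to mirror the structure of the proof in \cite{CMS} for $a=1$, isolating the combinatorial core (acyclic $n-2$ edges $\leftrightarrow$ two paths $\leftrightarrow$ one cycle after completion) so that the general-$a$ case differs from it only in the explicit description of the two extra edges.
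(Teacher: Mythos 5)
Your proposal is essentially the paper's own proof: the $n-2$ edges of \eqref{EqEdges} are edges of the functional graph of $\alpha$, a directed cycle among them would be a cycle of $\alpha$ of length $<n$, and if they are acyclic then $\alpha$ has at most two cycles, with the two-cycle possibility excluded by the sign computation $\eps(\alpha)=\eps(\sigma)\eps(\beta^{-1})=(-1)^{a}(-1)^{n-a-1}=(-1)^{n-1}$, the sign of an $n$-cycle. Two small corrections: your parenthetical ``$\alpha$ is even'' should read ``$\alpha$ has the sign of an $n$-cycle,'' and the ``standard fact'' you invoke is false as stated (two acyclic paths can also be closed into two separate cycles), so the parity argument you then supply is not an optional refinement but the essential step, exactly as in the paper.
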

\begin{proof}
    If $\alpha$ is a $n$-cycle, its graphical representation is a cycle
    and, thus, every strict subgraph is acyclic.
    It is in particular the case for the set \eqref{EqEdges} of edges.

In the other direction, let us assume acyclicity of \eqref{EqEdges};  that is, 
the graphical representation of $\alpha$ contains an acyclic subset of edges of size $n-2$.
Hence, it can be either a cycle or the union of two cycles.
But, $\alpha$ has the same sign of a $n$-cycle ($\beta$ is an $(n-a)$-cycle, and
the sign of $\sigma$ corresponds to the parity of $a$) and hence the second possibility
never occurs.
\end{proof}

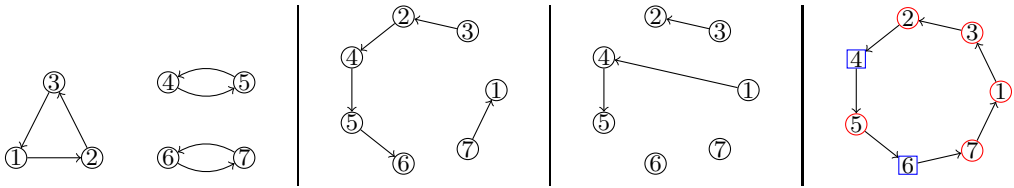
\begin{figure}[t] 
   \[ \begin{array}{c|c|c|c}
       \quad
\begin{tikzpicture}
\tikzstyle{vertex}=[circle,draw,inner sep=0.2pt,minimum size=1mm]
    \node (v1) at (0,0) [vertex] {\footnotesize $1$};
    \node (v2) at (1,0) [vertex] {\footnotesize $2$};
    \node (v3) at (.5,1) [vertex] {\footnotesize $3$};
    \node (v4) at (2,1) [vertex] {\footnotesize $4$};
    \node (v5) at (3,1) [vertex] {\footnotesize $5$};
    \node (v6) at (2,0) [vertex] {\footnotesize $6$};
    \node (v7) at (3,0) [vertex] {\footnotesize $7$};
    \draw [->] (v1) to (v2);
    \draw [->] (v2) to (v3);
    \draw [->] (v3) to (v1);
    \draw [->,bend right=30] (v4) to (v5);
    \draw [->,bend right=30] (v5) to (v4);
    \draw [->,bend right=30] (v6) to (v7);
    \draw [->,bend right=30] (v7) to (v6);
\end{tikzpicture} \quad & \quad
           \begin{tikzpicture}
\tikzstyle{vertex}=[circle,draw,inner sep=0.2pt,minimum size=1mm]
    \node (v1) at (0:1) [vertex] {\footnotesize $1$};
    \node (v2) at (102.8:1) [vertex] {\footnotesize $2$};
    \node (v3) at (51.4:1) [vertex] {\footnotesize $3$};
    \node (v4) at (154.3:1) [vertex] {\footnotesize $4$};
    \node (v5) at (205.7:1) [vertex] {\footnotesize $5$};
    \node (v6) at (257.2:1) [vertex] {\footnotesize $6$};
    \node (v7) at (308.6:1) [vertex] {\footnotesize $7$};
    \draw [->] (v7) to (v1);    
    \draw [->] (v3) to (v2);    
    \draw [->] (v2) to (v4);    
    \draw [->] (v4) to (v5);    
    \draw [->] (v5) to (v6);    
\end{tikzpicture} \quad & \quad
      \begin{tikzpicture}
\tikzstyle{vertex}=[circle,draw,inner sep=0.2pt,minimum size=1mm]
    \node (v1) at (0:1) [vertex] {\footnotesize $1$};
    \node (v2) at (102.8:1) [vertex] {\footnotesize $2$};
    \node (v3) at (51.4:1) [vertex] {\footnotesize $3$};
    \node (v4) at (154.3:1) [vertex] {\footnotesize $4$};
    \node (v5) at (205.7:1) [vertex] {\footnotesize $5$};
    \node (v6) at (257.2:1) [vertex] {\footnotesize $6$};
    \node (v7) at (308.6:1) [vertex] {\footnotesize $7$};
    \draw [->] (v3) to (v2);    
    \draw [->] (v1) to (v4);    
    \draw [->] (v4) to (v5);    
\end{tikzpicture} \quad & \quad
    \begin{tikzpicture}
\tikzstyle{redvertex}=[circle,draw=red,inner sep=0.2pt,minimum size=1mm]
\tikzstyle{bluevertex}=[rectangle,draw=blue,inner sep=0pt,minimum size=2.4mm]
    \node (v1) at (0:1) [redvertex] {\footnotesize $1$};
    \node (v2) at (102.8:1) [redvertex] {\footnotesize $2$};
    \node (v3) at (51.4:1) [redvertex] {\footnotesize $3$};
    \node (v4) at (154.3:1) [bluevertex] {\footnotesize $4$};
    \node (v5) at (205.7:1) [redvertex] {\footnotesize $5$};
    \node (v6) at (257.2:1) [bluevertex] {\footnotesize $6$};
    \node (v7) at (308.6:1) [redvertex] {\footnotesize $7$};
    \draw [->] (v7) to (v1);    
    \draw [->] (v1) to (v3);    
    \draw [->] (v3) to (v2);    
    \draw [->] (v2) to (v4);    
    \draw [->] (v4) to (v5);    
    \draw [->] (v5) to (v6);    
    \draw [->] (v6) to (v7);    
\end{tikzpicture} \quad
\end{array}\]
\caption{Graphs of Examples \ref{ExGraphRep}, \ref{ExRunningEdges},
\ref{ExIllustrationProof} and of the proof of Lemma \ref{LemLehmanKTo1}.}
\label{Fig4Graphes}
\end{figure}

\subsection{Counting sequences}
Let us now try to enumerate sequences $b_1$, \dots, $b_{n-1}$ as above.

We first enumerate sequences $b_1$, \dots, $b_{a-1}$
such that the edges $b_i \to \sigma(b_i)$ do not form any cycle.
Such a sequence is equivalent to the ordered choice, in the graphical representation
of $\sigma$, of $a-1$ edges that do not contain any cycle.

\begin{lemma}
    Let $G$ be a directed graph with $n$ vertices and $n$ edges,
    where $G$ is a disjoint union of $m_i$ cycles of length $i$ for $i \geq 1$.
    Then the number $N_a(\sigma)$ of acyclic subsets of $a-1$ edges is given by
    \begin{equation}\label{EqNbAcyclicSet}
        N_a(\sigma)=\sum_{k=0}^{a-1} \ \sum_{n_1,\dots,n_{a-1} \ge 0 \atop n_1 + \dots+(a-1)n_{a-1}=k}
    (-1)^{n_1+\dots+n_{a-1}} \binom{n-k}{a-1-k} \prod_{i=1}^{a-1} \binom{m_i}{n_i}.
\end{equation}
    In particular, when $a$ is fixed,
    the above equation is a polynomial in $n$, $m_1$, $m_2$, \dots, $m_{a-1}$.
    \label{LemNbSsGraphesAcycliques}
\end{lemma}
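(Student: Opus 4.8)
The plan is to prove this by inclusion--exclusion over the set of component cycles of $G$ that get entirely swallowed by the chosen edge set. The key observation to establish first is that, since $G$ is a disjoint union of directed cycles, every vertex has in-degree and out-degree $1$, so the only cycles occurring as subgraphs of $G$ are the component cycles themselves. Consequently a subset $E'$ of the edges of $G$ fails to be acyclic if and only if $E'$ contains \emph{all} edges of at least one component cycle: a proper subset of the $i$ edges of an $i$-cycle is a disjoint union of directed paths, hence acyclic. In particular, since we only ever choose $a-1$ edges, the component cycles that can possibly be fully contained are those of length at most $a-1$.

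Next I would set up the inclusion--exclusion. For each component cycle $C$ of $G$, let $A_C$ be the collection of $(a-1)$-subsets of edges of $G$ containing every edge of $C$. Then the number $N_a(\sigma)$ of acyclic $(a-1)$-subsets equals
\[
N_a(\sigma) = \sum_{\mathcal{S}} (-1)^{|\mathcal{S}|}\, \Bigl| \bigcap_{C \in \mathcal{S}} A_C \Bigr|,
\]
where $\mathcal{S}$ ranges over all sets of component cycles of $G$ (the empty set contributing the total count $\binom{n}{a-1}$). If the cycles of $\mathcal{S}$ use $k$ edges in total, then a member of $\bigcap_{C \in \mathcal{S}} A_C$ consists of those $k$ edges together with any $a-1-k$ of the remaining $n-k$ edges, so this intersection has size $\binom{n-k}{a-1-k}$, which is $0$ whenever $k > a-1$.

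Then I would regroup the sum by recording, for a given $\mathcal{S}$, the number $n_i$ of its cycles that have length $i$. The number of admissible $\mathcal{S}$ with prescribed $(n_i)_{i\ge 1}$ is $\prod_{i\ge 1}\binom{m_i}{n_i}$, while $k = \sum_i i\,n_i$ and $|\mathcal{S}| = \sum_i n_i$; substituting gives
\[
N_a(\sigma) = \sum_{k \ge 0}\ \sum_{\substack{n_1, n_2, \dots \ge 0 \\ \sum_i i\,n_i = k}} (-1)^{\sum_i n_i}\, \binom{n-k}{a-1-k} \prod_{i \ge 1} \binom{m_i}{n_i}.
\]
Since $\binom{n-k}{a-1-k}$ vanishes for $k > a-1$, only $k \in \{0,\dots,a-1\}$ contribute, and for those values any tuple with $n_i > 0$ for some $i \ge a$ would force $k \ge a$; hence such summands drop out and we are left exactly with \eqref{EqNbAcyclicSet}. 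For the polynomiality claim: with $a$ fixed the outer sum is finite ($0 \le k \le a-1$) and each inner sum is finite, and in every summand $\binom{n-k}{a-1-k}$ is a polynomial in $n$ and $\prod_{i=1}^{a-1}\binom{m_i}{n_i}$ is a polynomial in $m_1,\dots,m_{a-1}$, so $N_a(\sigma)$ is a polynomial in $n, m_1,\dots, m_{a-1}$.

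I expect the only genuinely delicate point to be the structural characterisation in the first paragraph --- namely, arguing cleanly that a set of edges of $G$ is non-acyclic precisely when it contains some full component cycle --- since everything afterwards is routine inclusion--exclusion bookkeeping and binomial manipulation.
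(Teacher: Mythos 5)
Your proof is correct and follows essentially the same route as the paper: inclusion--exclusion over the set of component cycles fully contained in the chosen edge set, with each intersection counted as $\binom{n-k}{a-1-k}$, then regrouping by the multiplicities $n_i$ of cycle lengths. The only addition is your explicit justification that an edge subset is acyclic iff it contains no full component cycle, which the paper leaves implicit.
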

\begin{proof}
    Let us label the cycles of the graph by integers from $1$ to $r$
    and denote by $\lambda_j$ the length of cycle $j$.
    Fix a subset $J$ of $[r]$.
    The number of sets of $a-1$ edges containing the cycle labelled $j$
    for all $j$ in $J$ is:
    \[\binom{n-k_J}{a-1-k_J}\text{, where }k_J=\sum_{j \in J} \lambda_j.\]
    We use the convention that the binomial coefficient vanishes if $k_J > a-1$.
    By inclusion-exclusion, the number of sets of $a-1$ edges containing no cycles is:
    \[\sum_{J \subseteq [r]} (-1)^{|J|} \binom{n-k_J}{a-1-k_J}.\]
    We split the sum depending of the value of $k_J$ and on the number $n_i$
    of $j$ in $J$ such that $\lambda_j=i$.
    Doing this, we obtain \eqref{EqNbAcyclicSet} (note that if $n_i >0$ for some $i >a-1$,
    then $k_J >a-1$ and the corresponding term vanishes).
\end{proof}

Finally, the number of possible values for the list $(b_1,\dots,b_{a-1})$
is $(a-1)!N_a(\sigma)$.

The enumeration of the possible values of $b_i$, for $i \ge a$, is harder.
A method due to Lehman is to enumerate such sequences with an additional element $x$
(which is also an integer between $0$ and $n$), with the condition that the edge
$b_a \rightarrow \sigma(x)$ does not create a cycle when added to the graph in \eqref{EqEdges}.

\begin{definition}
    A {\em Lehman sequence of type $a$}
    for $\sigma$ is a sequence $(x,b_1,\dots,b_{n-1})$ 
of integers between $1$ and $n$ such that:
\begin{itemize}
  \item the $b_i$ ($1 \le i \le n-1$) are distinct (but $x$ can be equal to some of the $b_i$);
  \item the following set of $n-1$ edges is acyclic:
  \begin{equation}\label{EqEdgesX}
\begin{cases}
b_i \longrightarrow \sigma(b_i) & \text{ for }1 \le i \le a-1 \\
b_i \longrightarrow \sigma(b_{i-1}) & \text{ for } a < i \le n-1 \\
b_a \longrightarrow \sigma(x).
\end{cases}
\end{equation}
\end{itemize}
\end{definition}

\begin{example}[Continuing Example \ref{ExRunningEdges}]
\label{ExRunningChoixX}
With $\sigma$ and $b_i$ (for $i=1,\cdots,6$) defined as in previous examples,
we can choose any $x \in \{\sigma^{-1}(3),\sigma^{-1}(2),
\sigma^{-1}(4),\sigma^{-1}(5),\sigma^{-1}(6)\}$ so that
$(x,b_1,\cdots,b_6)$ is a Lehman sequence of type $2$ for $\sigma$.
Indeed, the edge $(1,\sigma(x))$ must not create a cycle when added
to the middle-left graph of Figure \ref{Fig4Graphes}.

In general, we shall see later that
the number of possible values for $x$ (when $b_1,\dots,b_{n-1}$ are fixed)
is the oriented distance from $b_a$ to $f$
in the graphical representation of $\alpha$, 
where $f$ is the integer between $1$ and $n$ different from all the $b_i$
(in our example, $b_a=1$ and $f=6$).
\end{example}

With the extra parameter $x$, Lehman sequences are easy to enumerate.
The following is an extension of \cite[Proposition 2]{CMS}).

\begin{lemma}\label{LemLehmanEnumeration}
The number of Lehman sequences of type $a$ for $\sigma$ is
\[n \cdot (a-1)! \cdot N_a(\sigma) \cdot (n-a)!.\] 
As a corollary, for a fixed $a \ge 1$,
it depends polynomially on $n$,$m_1$,\dots,$m_{a-1}$
and not on higher multiplicities.
\end{lemma}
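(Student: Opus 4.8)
The plan is to count Lehman sequences of type $a$ in two stages, the first stage being essentially already done. A Lehman sequence is the data of $(x,b_1,\dots,b_{n-1})$, and I would organise the count by first choosing the prefix $(b_1,\dots,b_{a-1})$, then the remaining entries $b_a,\dots,b_{n-1}$ together with $x$, keeping track of the acyclicity constraint in \eqref{EqEdgesX}. The key observation is that the acyclicity condition on the full edge set \eqref{EqEdgesX} decouples: the edges $b_i\to\sigma(b_i)$ for $1\le i\le a-1$ must themselves be acyclic (this is a necessary condition, since a cyclic subset stays cyclic), and conversely, once this acyclic prefix is fixed, the analysis of the remaining choices can be carried out along the ``path'' $b_a,b_{a+1},\dots$.

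First I would recall from the discussion preceding the lemma that the number of admissible prefixes $(b_1,\dots,b_{a-1})$ — equivalently, ordered choices of $a-1$ edges in the graphical representation of $\sigma$ forming an acyclic set — is exactly $(a-1)!\, N_a(\sigma)$, using Lemma \ref{LemNbSsGraphesAcycliques}. So it remains to show that, for each fixed admissible prefix, the number of ways to extend it to a full Lehman sequence, i.e. the number of tuples $(x,b_a,b_{a+1},\dots,b_{n-1})$ with the $b_i$ distinct from each other and from $b_1,\dots,b_{a-1}$ and the whole edge set \eqref{EqEdgesX} acyclic, equals $n\cdot(n-a)!$. I would prove this by building the sequence greedily: choose $x$ freely among all $n$ values (there are no constraints on $x$ beyond lying in $[n]$, and the edge $b_a\to\sigma(x)$ will be added only after $b_a$ is chosen); then choose $b_a$; then $b_{a+1}$; and so on. The heart of the argument is that at each step, when $b_a,\dots,b_{i-1}$ have been chosen and we are about to choose $b_i$ (for $a\le i\le n-1$), the number of legal values for $b_i$ is exactly $n-i$: we must avoid the $i-1$ already-used values among $b_1,\dots,b_{i-1}$, and avoid exactly one further value — the unique choice of $b_i$ that would close a cycle with the edges committed so far. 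This ``exactly one forbidden value'' phenomenon is what makes the count telescope: after fixing the prefix and $x$, the number of extensions is $(n-a)(n-a-1)\cdots 1 = (n-a)!$, giving $n\cdot(n-a)!$ in total, hence $n\cdot(a-1)!\cdot N_a(\sigma)\cdot(n-a)!$ overall.

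The main obstacle — and the step I would spend the most care on — is justifying that at each stage there is precisely one forbidden value of $b_i$ (not zero, not two or more). The point is that the edges committed at the moment we add the edge into $b_i$, namely the prefix edges $b_j\to\sigma(b_j)$ ($j\le a-1$), the path edges already placed, and the edge $b_a\to\sigma(x)$, form a forest in which the candidate new edge $b_{i-1}\to\sigma(b_{i-1})$ (or the first path edge $b_{a+1}\to\sigma(b_a)$) has its head $\sigma(b_{i-1})$ lying in a unique tree component; a cycle is created precisely when $b_i$ is chosen so that $\sigma(b_{i-1})$ routes back to $b_i$, and one checks via the structure of the partial graph (each vertex so far has out-degree at most one) that exactly one such $b_i$ exists and it is always an available, not-yet-used symbol. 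This is exactly the mechanism behind \cite[Proposition 2]{CMS} in the case $a=1$, and the role of writing out $a-1$ fixed points of $\beta$ explicitly in \eqref{EqBetaB}, together with the freedom in $x$, is precisely to make this one-forbidden-value bookkeeping go through uniformly; I would present it as a short induction on $i$. The final sentence — polynomial dependence on $n,m_1,\dots,m_{a-1}$ for fixed $a$ — is then immediate from the explicit formula and the corresponding assertion in Lemma \ref{LemNbSsGraphesAcycliques}, since $n\cdot(a-1)!\cdot(n-a)!$ is (for fixed $a$) a polynomial in $n$ times a constant.
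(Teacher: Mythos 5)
Your proposal is correct and follows essentially the same route as the paper: count admissible prefixes $(b_1,\dots,b_{a-1})$ via $(a-1)!\,N_a(\sigma)$, let $x$ range freely over $[n]$, and then show each subsequent $b_j$ has exactly $n-j$ legal values because the committed edges have out-degree at most one, so only the endpoint of the maximal path starting at the head of the new edge is newly forbidden. The only nitpick is phrasing: the values of $b_j$ that would close a cycle are \emph{all} vertices on that maximal path (and the new edge is $b_j\to\sigma(b_{j-1})$, not $b_{j-1}\to\sigma(b_{j-1})$), but every such vertex except the path's endpoint is already used, so your count of one extra forbidden value, and hence the final formula, stands exactly as in the paper.
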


\begin{proof}
We first choose an $(a-1)$-tuple $(b_1,\cdots,b_{a-1})$ such that the
edges $b_i \rightarrow \sigma(b_i)$ do not contain a cycle.
We saw that there are $(a-1)! \cdot N_a(\sigma)$ choices for this list.
Then, choose any number between $1$ and $n$ as the value for $x$.

Let us compute the number of possible values of $b_a$,
after we have fixed the values of $x, b_1, \dots, b_{a-1}$.
By definition of a Lehman sequence, $b_a$ must be different from all values 
$b_1, \dots, b_{a-1}$.
We must also avoid forming a cycle with the edge $b_a \to \sigma(x)$ 
in the graph with edges $b_i \rightarrow \sigma(b_i)$.
This forbids one additional value, the end of the (possibly empty) maximal path beginning
at $\sigma(x)$ in this graph (this maximal path is unique as at most one edge leaves every vertex).
Finally there are $n-a$ possible values for $b_a$.

The same proof shows that, if $b_a,\cdots,b_{j-1}$ are fixed,
there are $n-j$ possible values for $b_j$ (for all $j \in \{a+1,\cdots,n-1\}$).
This ends the proof of the lemma.
\end{proof}

\begin{example}\label{ExIllustrationProof}
    [Illustration of the proof]
    Take as before $\sigma=(1\ 2\ 3)(4\ 5)(6\ 7)$, $b_1=4$, $b_2=1$ and $b_3=3$.
    We also set $x=5$, that is $\sigma(x)=4$.
    As a consequence we know that \eqref{EqEdgesX} contains the middle-right
    graph of Figure \ref{Fig4Graphes}.

We want to find the number of possible values $b_4\neq b_1,b_2,b_3$
such that the edge $b_4 \to \sigma(b_3)=1$ does not add 
any cycle to the configuration.
In addition to the forbidden values $\{4,1,3\}$,
one should also avoid $b_4=5$, which would create a cycle $(1,4,5)$.
So the number of possible values for $b_4$ is $3$,
as asserted in the proof above.
\end{example}

\subsection{From sequences to permutations}
Because of Lemma \ref{LemLehmanSeqToFacto}, to each Lehman sequence we can associate a 
permutation $\beta$ of type $(n-a,1^a)$, such that $\sigma \cdot \beta^{-1}$
is a full cycle.
For fixed $n$ and $\sigma$, denote the function mapping a Lehman sequence of type $a$
for $\sigma$ by $\Lambda_a$.
We now prove that $\Lambda_{a}$ is $k$-to-$1$ (an extension of \cite[Proposition 3]{CMS}), 
for some appropriate number $k$.

\begin{lemma}\label{LemLehmanKTo1}
Let $\beta$ be a permutation of type $(n-a,1^a)$,
such that $\sigma \cdot \beta^{-1}$ is a full cycle.
Set 
\[k=(a-1)! \cdot \frac{a(n-a)}{2} \cdot n\]
Then there are $k$ Lehman sequences of type $a$
that are pre-images of $\beta$ by $\Lambda_a$.
\end{lemma}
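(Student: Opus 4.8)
The plan is to fix a permutation $\beta$ of type $(n-a,1^a)$ with $\sigma\cdot\beta^{-1}$ a full cycle and count its preimages under $\Lambda_a$ directly, checking the answer is independent of $\beta$ and equals $k$. I would split a preimage $(x,b_1,\dots,b_{n-1})$ into two independent pieces: the way the cycle decomposition of $\beta$ is presented in the form~\eqref{EqBetaB}, and the value of $x$. For the first piece, presenting $\beta$ as in~\eqref{EqBetaB} amounts to choosing an ordered list $b_1,\dots,b_{a-1}$ of $a-1$ distinct fixed points among the $a$ fixed points of $\beta$ --- the unused fixed point being forced to be the unique integer $f$ not among $b_1,\dots,b_{n-1}$ --- and then choosing a starting point $b_a$ for the unique $(n-a)$-cycle of $\beta$. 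That is $a!$ ordered lists (equivalently: $a$ choices of the left-out point $f$, then $(a-1)!$ orderings of the rest) and $n-a$ rotations, so $a!\,(n-a)$ presentations in all; I would keep the $(a-1)!$ orderings separate, as they will turn out to play no role.

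Next I would count, for a fixed presentation, the admissible values of $x$; this is where the graphical representation of $\alpha:=\sigma\cdot\beta^{-1}$ enters. By hypothesis $\alpha$ is a full $n$-cycle, so its graphical representation is a single directed $n$-cycle. As observed just after~\eqref{EqEdges}, the $n-2$ edges in~\eqref{EqEdges} are exactly the out-edges of all vertices except $b_a$ and $f$; deleting these two out-edges from the $n$-cycle $\alpha$ leaves two vertex-disjoint directed paths, $P_1$ running from $\alpha(b_a)$ to $f$ and $P_2$ running from $\alpha(f)$ to $b_a$, with $P_1$ having exactly $d(b_a,f)$ vertices, where $d(u,v)$ denotes oriented distance along $\alpha$ (the least $j\ge 1$ with $\alpha^j(u)=v$ when $u\neq v$, and $0$ when $u=v$). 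Adding the edge $b_a\to\sigma(x)$ creates a cycle precisely when there is a directed path from $\sigma(x)$ back to $b_a$, that is, precisely when $\sigma(x)$ lies on $P_2$ (from any vertex of $P_1$ the walk dead-ends at $f$, while $b_a$ is the dead end of $P_2$). Hence the edge set~\eqref{EqEdgesX} is acyclic if and only if $\sigma(x)\in P_1$; since $\sigma$ is a bijection, the number of admissible $x$ equals $|P_1|=d(b_a,f)$. This also establishes the assertion announced in Example~\ref{ExRunningChoixX}.

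Combining the two steps, the number of preimages of $\beta$ is $\sum d(b_a,f)$ over all presentations~\eqref{EqBetaB}, and since $d(b_a,f)$ is insensitive to the order of the displayed fixed points this equals $(a-1)!\sum_{f}\sum_{b}d(b,f)$, with $f$ ranging over the $a$ fixed points of $\beta$ and $b$ over the $n-a$ points in the support $F^{c}$ of its long cycle. It remains to evaluate $S:=\sum_{f\in F,\,b\in F^{c}}d(b,f)$, where $F$ is the set of fixed points of $\beta$, $|F|=a$. I would do this with two elementary observations about the $n$-cycle $\alpha$: for any vertex $b$ the numbers $d(b,v)$ with $v\in[n]$ are a permutation of $0,1,\dots,n-1$, so $\sum_{v\in[n]}d(b,v)=\binom n2$ and hence $\sum_{b\in F^{c}}\sum_{v\in[n]}d(b,v)=(n-a)\binom n2$; and $d(u,v)+d(v,u)=n$ whenever $u\neq v$, so $\sum_{b,b'\in F^{c}}d(b,b')=\tfrac12\,n(n-a)(n-a-1)$. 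Subtracting the second from the first isolates the part of the sum whose target lies in $F$ rather than $F^{c}$:
\[
S=(n-a)\binom n2-\frac{n(n-a)(n-a-1)}{2}=\frac{a(n-a)\,n}{2}.
\]
Therefore the number of preimages of $\beta$ is $(a-1)!\cdot\frac{a(n-a)}{2}\cdot n=k$, as claimed. I expect the delicate point to be the second step --- pinning down that deleting the two out-edges of $\alpha$ produces exactly the two paths $P_1,P_2$ and that adding $b_a\to\sigma(x)$ preserves acyclicity exactly when $\sigma(x)\in P_1$; the concluding double sum is a routine pairing argument and the bookkeeping of the first step is straightforward.
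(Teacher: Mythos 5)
Your proof is correct and follows essentially the same route as the paper: you decompose a preimage into the choice of the omitted fixed point $f$, the starting point $b_a$ of the long cycle, the $(a-1)!$ orderings of the displayed fixed points, and the value of $x$, and you identify the number of admissible $x$ as the oriented distance $d(b_a,f)$ along the cycle $\alpha$, exactly as in the paper's proof. The only divergence is in evaluating $\sum_{f,b} d(b,f)$: the paper rewrites it as a count of coloured vertex triples (two blue and one red, or two red and one blue), whereas you subtract the within-support distances using $\sum_{v\in[n]}d(b,v)=\binom{n}{2}$ and the antisymmetry $d(u,v)+d(v,u)=n$; both computations are equally elementary and yield $\tfrac{a(n-a)}{2}\,n$, so this is a cosmetic variant rather than a different method.
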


\begin{proof}
Fix a permutation $\beta$ as in the statement of the lemma.
A Lehman sequence $(x,b_1,\dots,b_{n-1})$ of type $a$ for $\beta$ is given by
the following:
\begin{itemize}
  \item the choice of which fixed point of $\beta$ does not appear in the
  list $b_1, \dots, b_{a-1}$ (we will denote it $f$);
  \item the choice of an element $b_a$ in the $(n-a)$-cycle of $\beta$
  (the values of $b_{a+1}, b_{a+2}, \dots$ follow from this choice);
  \item the choice of $x$;
  \item a choice of a permutation of the fixed points $b_1, \dots , b_{a-1}$.
\end{itemize}
For the last item, the number of possible permutations is $(a-1)!$ and does not
depend on the choices above.
However, the number of choices for $x$ does depend on $f$ and $b_a$, so we
enumerate directly the number of possible triplets $(x,f,b_a)$.

We consider the graphical representation of 
$\alpha =\sigma \cdot \beta^{-1}$.
By assumption, this is a directed cycle of length $n$.
We can colour its vertices as follows
(denote $G_{\alpha;\beta}$ the resulting coloured graph):
\begin{itemize}
  \item the fixed points of $\beta$
  are vertices coloured in {\em blue}
  (for readers of black-and-white printed versions,
  these are {\em squared} vertices).
  \item the vertices corresponding to points in the support of $\beta$
  are vertices coloured in {\em red}
  ({\em circular} vertices).
\end{itemize}
For example, if $\beta=(1\ 3\ 7\ 5\ 2)$ and $\alpha=(1\ 3\ 2\ 4\ 5\ 6\ 7)$,
then $G_{\alpha;\beta}$ is the right-most graph of Figure \ref{Fig4Graphes}.

With this colouring, $f$ and $b_a$ must be respectively chosen among the
blue and red vertices of $G_{\alpha;\beta}$.
The graph with edges \eqref{EqEdges} is obtained from $G_{\alpha;\beta}$
by erasing the edges leaving the blue vertex $f$
 and the red vertex $b_a$.
 This graph is a disjoint union of two paths, one ending in $b_a$ and
 one ending in $f$ (see Example \ref{ExRunningEdges} where $b_a=1$ and $f=6$).

Now, $x$ must be chosen such that $b_a \to \sigma(x)$ does not create 
a cycle, that is $\sigma(x)$ must be on the path ending in $f$.
Thus, if $b_a$ and $f$ are fixed, there are $d(b_a,f)$ possible values for $x$, 
where $d$ is the {\em oriented} distance from $b_a$ to $f$ in 
$G_{\alpha;\beta}$
(see Example \ref{ExRunningChoixX}).

Finally the number of triplets $(x,f,b_a)$ that yield a Lehman sequence is
\begin{equation}\label{EqChoixDist}
 \sum_{v_b \in V_b \atop v_r \in V_r} d(v_r, v_b),
\end{equation}
where $V_b$ and $V_r$ are respectively the set of blue and red vertices of $G_\alpha$.
Note that $d(v_r, v_b)-1$ is the number of vertices $v$ (of any colour), which are on the
path between $v_r$ and $v_b$ (and different from $v_r$ and $v_b$).
So expression~\eqref{EqChoixDist} can be rewritten as
\[ |V_b| \cdot |V_r| + \left| \left\{ (v_b,v_r,v), \text{ s.t. }\begin{tabular}{c}
$v_b$ is blue ;\\
$v_r$ is red ;\\
$v$ is on the path from $v_r$ to $v_b$.
\end{tabular} \right\} \right| \]
But any set of three distinct vertices of $G_{\alpha;\beta}$
with either two blue and one red vertices 
or two red and one blue vertices can be seen in a unique way as a triplet of the set above.
So finally \eqref{EqChoixDist} is equal to:
\[ |V_b| \cdot |V_r| + \binom{|V_b|}{2} \cdot |V_r| + |V_b| \cdot \binom{|V_r|}{2}
=\frac{|V_b| \cdot |V_r|}{2} (|V_b|+|V_r|). \]
As $|V_b| = a$, while $|V_r|=n-a$, this ends the proof of the lemma.
\end{proof}

Finally, Lemmas~\ref{LemLehmanEnumeration} and \ref{LemLehmanKTo1}
imply the following.
If $\eps(\lambda)=(-1)^{a}$, then the number of $(n-a)$-cycles $\beta$
such that $\sigma \beta^{-1}$ is a cycle is
\begin{equation}
    \nu_{1^a}(\lambda)=\frac{n \cdot (a-1)! \cdot N_a(\sigma) \cdot (n-a)!}
{(a-1)! \cdot \frac{a(n-a)}{2} \cdot n}
=2 \frac{(n-a-1)!}{a} N_a(\sigma). 
\label{EqNuHook}
\end{equation}
The case $\rho=(1^a)$ of Theorem~\ref{ThmDependence}
follows from Lemma \ref{LemNbSsGraphesAcycliques}.

\subsection{Small values}\label{SubsectSmallValuesBijection}

Combining \cref{EqNbAcyclicSet,EqNuHook}, the proof above gives explicit formulae for $\nu_{(1^a)}(\lambda)$, slightly different
from the one obtained with the character approach. For instance,
\begin{align*}
    \nu_{1^2} (\lambda)&= (1+\eps(\lambda))\frac{(n-3)!}{2} (n-m_1(\lambda)) ;\\
    \nu_{1^3} (\lambda) &= (1-\eps(\lambda))\frac{(n-4)!}{3} 
    \left( \binom{n}{2}-(n-1) \cdot m_1+ \binom{m_1}{2} - m_2 \right).
\end{align*}
The expressions look slightly different from the one obtained in
Section \ref{SubsectSmallValuesCharacters}, but they are of course equivalent,
as it can be easily checked by the reader.

We can now give a quick proof of Proposition \ref{PropA2New}, which follows directly
from the above bijective proof. 
If $\sigma$ has no cycles of length smaller than $a$, except $b$ fixed points, then it is clear that $N_a(\sigma)=\binom{n-b}{a-1}$.
This yields the formula claimed in the introduction.

\section{Determining connection coefficients using induction relations}\label{SectInduction}

\subsection{Another proof of Theorem \ref{ThmDependence}}
We now give an induction proof of Theorem \ref{ThmDependence}.  For any
integer partition $\lambda$, let $\lambda^\downi{j}$ be the partition obtained
from $\lambda$ by removing a part of size $j$ and replacing it with a part of
size $j-1$. We do not use this notation if $\lambda$ has no part of size $j$.

\begin{lemma}
    Let $\lambda \vdash n+1$ and $a < n$ a non-negative integer. Then
    \begin{align}
	    (a+1) |C_\lambda| \nu_{1^{a+1}}(\lambda)
     &= (n+1) \sum_{j \geq 2 \atop m_j(\lambda)>0} \left|C_{\lambda^\downi{j}}\right|
     \nu_{1^a}\left(\lambda^{\downi{j}}\right) \cdot (m_{j-1}(\lambda)+1) \cdot 
     (j-1),\label{eq:midproof}
     \\ \textnormal{ and }\ 
     (a+1) \nu_{1^{a+1}}(\lambda) &= \sum_{j \geq
     2 \atop m_j(\lambda)>0}\nu_{1^a}\left(\lambda^{\downi{j}}\right) \cdot m_{j}(\lambda) \cdot j.
     \notag
    \end{align}
    \label{LemAppliedInduction}
\end{lemma}

While we will establish in \cref{sec:cycles} a more general result (Corollary~\ref{thm:corschaeff}),
we elect to
prove Lemma~\ref{LemAppliedInduction} directly immediately below
to make the first four sections self-contained.
\begin{proof}
	Inspecting the left hand side of \eqref{eq:midproof}, we see
	that it counts triples $(\sigma, \alpha,\beta)$ in
	$\Sym{n+1}$ with the following properties: $\sigma \cdot \alpha \cdot
	\beta = e$, where $e$ is the identity element; $\sigma$, $\alpha$, and $\beta$ are of
	types $(n+1)$, $\lambda$, and $(n-a, 1^{a+1})$, respectively;  and $\beta$
	has a distinguished fixed point.  Notice that $\alpha$ and $\beta$ cannot
    share a fixed point because $\sigma$ does not have any,
    and their product
	is $e$.  The right hand side of
	\eqref{eq:midproof} counts the union of
    quadruples $(s, \hats, \hata,\hatb)$ over $\{j \geq 2; m_j(\lambda)>0\}$ with the following properties:
	$s \in [n+1]$; $\hats, \hata$ and $\hatb$ are permutations of
    $[n+1] \setminus \{s\}$ of types $(n)$,
	$\lambda^\downi{j}$ and $(n-a, 1^a)$, respectively with $\hats \cdot  \hata \cdot \hatb=e$;
    and $\hata$ has a member of a
	$(j-1)$-cycle distinguished.  We give a bijection between these two
	sets.

	Suppose that $(\sigma, \alpha, \beta)$ is a triple counted by the left
	hand side \eqref{eq:midproof}, and suppose that $s$ is
	the distinguished fixed point of $\beta$. 
    Denote by $j$ the length of the cycle of $\alpha$ containing $s$.
    Obviously, $m_j(\lambda)>0$ and, besides, $j \ge 2$ as
    $s$ cannot be a fixed point of $\alpha$.
    Simply remove $s$ from
	$\sigma$, $\alpha$ and $\beta$ to obtain permutations $\hats, \hata$ and
	$\hatb$, respectively, and record the image of $s$ in $\alpha$.
	It's not difficult to see that $\hats$, $\hata$ and $\hatb$ have the
	claimed cycle type.  
    Removing $s$ from those permutations is equivalent to
    defining $\hats$, $\hata$ and $\hatb$ as the restrictions of
	$(s\; \sigma(s))\; \sigma$, $(s\; \alpha(s))\;\alpha$ and $\hatb$,
    respectively, to $[n+1] \setminus \{s\}$.  Furthermore, we
	easily compute
	\begin{align*}
		\hats \cdot \hata \cdot \hatb &= (s\; \sigma(s)) \cdot \sigma
		\cdot (s\; \alpha(s)) \cdot \alpha \cdot \beta\\
		&= (s\; \sigma(s)) \cdot (\sigma(s)\;\; \sigma \circ \alpha(s)) \cdot \sigma
		\cdot \alpha \cdot \beta = e,
	\end{align*}
	where the third inequality follows from the fact that $\sigma \circ
	\alpha(s) = s$.

	The construction is clearly reversible, establishing the first assertion of the lemma.  For the second, 
	it is well known that
	\begin{equation*}
		|C_\lambda| = \frac{(n+1)!}{\prod_i i^{m_i(\lambda)} m_i(\lambda)!},
	\end{equation*}
	whence we obtain, for $j \ge 2$ with $m_j(\lambda)>0$,
	\begin{equation*}
        (n+1) \frac{|C_{\lambda^{\downi{j}}}|}{|C_\lambda|} = \frac{j
		\cdot m_j(\lambda)}{(j-1) (m_{j-1}(\lambda) + 1)}.
	\end{equation*}
	Therefore \eqref{eq:midproof} becomes
	\begin{equation*}
		(a+1) \nu_{1^{a+1}}(\lambda) = \sum_{j \geq 2 \atop m_j(\lambda)>0}
		\nu_{1^a}(\lambda^{\downi{j}}) \cdot m_{j}(\lambda) \cdot j,
	\end{equation*}
	completing the proof of the second assertion of the lemma.
\end{proof}

We now prove Theorem \ref{ThmDependence} in the
case $\rho = 1^a$.
\begin{proof}[Proof of Theorem \ref{ThmDependence} for $\rho = 1^a$]
	Our proof proceeds by induction on $a$.  By Theorem \ref{thm:boc}
	(Boccara's Theorem), we have
	\begin{equation*}
		\nu_1(\lambda) = (1 - \eps(\lambda)) (n-2)!,
	\end{equation*}
	establishing the base case.

	Now suppose $a>1$.   By induction,
	there exists a polynomial $G_{a-1}(x_1, \dots, x_{a-1})$ such that for
	any particular partition $\lambda$ with $|\lambda| > a$, we have
	\begin{equation*}
		\nu_{1^{a-1}}(\lambda)  = \frac{\left( 1+(-1)^{a-1}
		\eps(\lambda)\right)(|\lambda|-a)!}{(a-1)!} G_{a-1}(|\lambda|, m_1(\lambda), \dots,
		m_{a-2}(\lambda)).	
	\end{equation*}
	For a fixed $\lambda$, we use Lemma \ref{LemAppliedInduction} to obtain
	that
	\begin{align}
		\nu_{1^a}(\lambda) &= \frac{1}{a} \sum_{j \geq 2 \atop m_j(\lambda)>0} \nu_{1^{a-1}}(\lambda^\downi{j}) \cdot
		m_j(\lambda) \cdot j \notag\\
		&= \frac{1}{a} \sum_{j \geq 2 \atop m_j(\lambda)>0} \tfrac{( 1+(-1)^{a-1} \eps(\lambda^\downi{j})) ( |\lambda^\downi{j}|-a)!}{(a-1)!} G_{a-1}(|\lambda^\downi{j}|,  m_1(\lambda^\downi{j}), \dots,
		m_{a-1}(\lambda^\downi{j})) \cdot m_j(\lambda) \cdot j.\label{eq:needtoexpand}
	\end{align}
	For $j \geq 2$, we see that $\eps(\lambda^\downi{j}) = -\eps(\lambda)$ and
	$|\lambda^\downi{j}| = |\lambda|-1$.  Thus, we see \eqref{eq:needtoexpand} becomes
	\begin{align*}
		& 
		\begin{split}
			\tfrac{(1+ (-1)^a \eps(\lambda)) (|\lambda|-a-1)!}{a!}& \left(\sum_{j \geq 2 \atop m_j(\lambda)>0}^{a-1}
		G_{a-1}(|\lambda|-1, \dots, m_{j-1}(\lambda) + 1, m_j(\lambda) - 1, \dots,
		m_{a-2}(\lambda)) \cdot m_j(\lambda) \cdot j\right.\\
		& + \left. \sum_{j \geq a \atop m_j(\lambda)>0}
		G_{a-1}( |\lambda|-1, m_1(\lambda), \dots,
		m_{a-2}(\lambda)) \cdot m_j(\lambda) \cdot j\right)
		\end{split}
		\\
		& 
		\begin{split}
		=\tfrac{(1+ (-1)^a \eps(\lambda)) (|\lambda|-a-1)!}{a!}&\left(
		\sum_{j \geq 2 \atop m_j(\lambda)>0}^{a-1} G_{a-1}(|\lambda|-1, \dots, m_{j-1}(\lambda) + 1, m_j(\lambda) - 1, \dots,
		m_{a-2}(\lambda)) \cdot m_j(\lambda) \cdot j\right.\\
        & +\left. G_{a-1}(|\lambda| - 1, m_1(\lambda), \dots, m_{a-2}(\lambda))
	\left(|\lambda| - \sum_{j = 1 \atop m_j(\lambda)>0}^{a-1} m_j(\lambda) \cdot j\right) \right).
		\end{split}
	\end{align*}
	Notice that in the two equations immediately above that the condition
	$m_j(\lambda) > 0$ is superfluous because there are no longer terms with
	$\lambda^\downi{j}$, and the terms with $m_j(\lambda)$ are 0.
    The last equation contains a polynomial dependent on $|\lambda|$, and
	$m_i(\lambda)$ for $1 \leq i \leq a-1$, but otherwise independent of $\lambda$,
	completing the proof.
\end{proof}

\subsection{Small values}
Notice that the above proof gives an inductive
formula to compute $\nu_{1^a}(\lambda)$.  For $\lambda \vdash n$, we have
\begin{align*}
	\nu_{1^2}(\lambda) &= \frac{(1+
	\eps(\lambda))(n-3)!}{2}(n-m_1(\lambda));\\
	\nu_{1^3}(\lambda) &= \frac{(1 -\eps(\lambda)) (n-4)!}{3!} ( (n-
	m_1(\lambda))_2  - 2 m_2(\lambda));\\
	\nu_{1^4}(\lambda) &= \frac{(1+\eps(\lambda)) (n-5)!}{4!} \left(\left(n-
	m_1(\lambda)\right)_3  - 6 m_2(\lambda) (n-m_1(\lambda) -2) - 6
	m_3(\lambda)\right).
\end{align*}
Once again, the expressions look different from the ones obtained by the
other method (Sections ~\ref{SubsectSmallValuesCharacters}
and \ref{SubsectSmallValuesBijection}), but they are of course equivalent.

\subsection{A symmetric function formula}\label{sec:symfunc}
We end this section with a symmetric function formula.
This formula will not be used in this paper,
but we mention it 
because it encodes our induction in a very compact way.
As this is not central in the paper, 
we do not recall needed definitions that involve symmetric functions.
We will, however, use the standard notation found in \cite[Chapter I]{Macdonald}.\medskip

Let $\Lambda$ be the symmetric function ring.
It admits a linear basis, called power-sum basis $(p_\lambda)_\lambda$,
indexed by all partitions.  Let $\QQ[\Sym{n}]$ be the group algebra of
$\Sym{n}$.
Let us consider the linear operator
\[ \psi : \begin{array}{rcl} 
	\QQ[\Sym{n}] & \to & \Lambda\\
\sigma & \mapsto & \frac{1}{n!} p_{\lambda},
\end{array}\]
where $\lambda$ is the cycle-type of $\sigma$.
This is a natural transformation, which is used in particular to link
 irreducible characters of the symmetric group with Schur functions
 (see \cite[section I,7]{Macdonald}). Recall from Section \ref{sec:intro} that
 $K_\lambda$ is the sum of permutations of type $\lambda$ and lies in the centre
 of $\QQ[\Sym{n}]$.

The purpose of this section is to give an explicit expression
for $F_a(n):=\psi \big(K_{(n)} \cdot K_{(n-a,1^a)}\big)$ in terms
of monomial symmetric functions $(M_\lambda)$.

From the definitions, we have that
\[F_a(n):=\psi \big(K_{(n)} \cdot K_{(n-a,1^a)}\big) = \frac{1}{n!}
\sum_{\lambda \vdash n} |C_\lambda| \nu_{1^a}(\lambda) p_\lambda. \]

As always in this paper, the case $a=1$ is easy.
For $a=1$,
one has $\nu_1(\lambda)=(1-\eps(\lambda))(n-2)!$. 
Denoting as usual $z_\lambda=n!/|C_\lambda|$, one has
\begin{multline}
F_1(n) =  (n-2)! \sum_{\lambda \vdash n} (1-\eps(\lambda)) \frac{p_\lambda}{z_\lambda}
=  (n-2)! \left( \sum_{\lambda \vdash n}\frac{p_\lambda}{z_\lambda}
- \sum_{\lambda \vdash n} \eps(\lambda) \frac{p_\lambda}{z_\lambda}\right)
 = (n-2)! \sum_{\lambda \vdash n \atop \lambda \neq (1^n)} M_\lambda,
 \label{EqF1}
\end{multline}
where the last equality comes from \cite[Chapter I, Eq (2.14')]{Macdonald}.

We then proceed by induction using to the following lemma. 
 \begin{lemma}\label{LemIndFa}
 For $a\geq 0$,
 \[ F_{a+1}(n+1) = \frac{1}{a+1} \Delta\big(F_a(n) \big), \]
 where $\Delta$ is the differential operator
 \[ \sum_i x_i^2 \frac{\partial}{\partial x_i}. \]
\end{lemma}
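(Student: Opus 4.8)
The lemma relates $F_{a+1}(n+1)$ to $F_a(n)$ via the differential operator $\Delta = \sum_i x_i^2 \partial/\partial x_i$, and the natural strategy is to apply $\psi$ to the second recurrence in Lemma \ref{LemAppliedInduction}, namely
\[
(a+1)\,\nu_{1^{a+1}}(\lambda) = \sum_{j\geq 2} \nu_{1^a}\!\left(\lambda^{\downi{j}}\right)\cdot m_j(\lambda)\cdot j,
\]
valid for $\lambda \vdash n+1$. Writing $F_{a+1}(n+1) = \frac{1}{(n+1)!}\sum_{\lambda\vdash n+1}|C_\lambda|\,\nu_{1^{a+1}}(\lambda)\,p_\lambda$ and substituting the recurrence, the whole content of the proof is to show that the operator $p_\lambda \mapsto$ (appropriate combination indexed by $\lambda^{\downi{j}}$) is exactly realized by $\Delta$ on power sums, after the bookkeeping of the $|C_\lambda|$ and $1/n!$ factors is taken into account.

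First I would compute $\Delta$ on the power-sum basis. Since $p_k = \sum_i x_i^k$, we have $\Delta(p_k) = \sum_i x_i^2 \cdot k x_i^{k-1} = k\, p_{k+1}$, and by the Leibniz rule $\Delta(p_\lambda) = \sum_{j} (\text{parts of size }j)\cdot j\cdot p_{\lambda^{\uparrow(j)}} \cdot (\text{bookkeeping})$; more precisely, if $\lambda = (1^{m_1}2^{m_2}\cdots)$ then $\Delta(p_\lambda) = \sum_{j\geq 1} m_j\cdot j\cdot p_{\lambda^{\uparrow(j)}}$ where $\lambda^{\uparrow(j)}$ replaces one part of size $j$ by a part of size $j+1$ (using the $\uparrow$ notation introduced just before Lemma \ref{lem:simple}). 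Now I would reindex the sum defining $F_{a+1}(n+1)$: for each $\lambda\vdash n+1$ and each $j\geq 2$ with $m_j(\lambda)>0$, the partition $\mu := \lambda^{\downi{j}} \vdash n$ satisfies $\lambda = \mu^{\uparrow(j-1)}$, so summing over $(\lambda,j)$ is the same as summing over $(\mu, j-1)$ with $\mu\vdash n$. Under this reindexing the right-hand side becomes $\sum_{\mu\vdash n}\nu_{1^a}(\mu)\sum_{k\geq 1}(\text{number of ways to promote a }k\text{-part})$, which should match $\sum_\mu \nu_{1^a}(\mu)$ applied to $\Delta(p_\mu)$ up to the ratio of class sizes.

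The one genuine computation is reconciling the combinatorial multiplicities with the $|C_\lambda|$ factors. Using $|C_\lambda| = (n+1)!/\prod_i i^{m_i(\lambda)}m_i(\lambda)!$, hence $|C_\lambda|/(n+1)! = 1/z_\lambda$, the coefficient of $p_\lambda$ in $F_{a+1}(n+1)$ is $\frac{1}{z_\lambda(a+1)}\sum_{j\geq 2}\nu_{1^a}(\lambda^{\downi{j}})\,m_j(\lambda)\,j$. I would then verify that $\frac{1}{z_\lambda}\,m_j(\lambda)\,j = \frac{1}{z_{\lambda^{\downi{j}}}}\cdot\big((j-1)\text{-multiplicity bookkeeping}\big)$ — precisely, $z_{\lambda}/z_{\lambda^{\downi{j}}} = \frac{j\,m_j(\lambda)}{(j-1)(m_{j-1}(\lambda^{\downi{j}})+1)}$ type identities, which is the same arithmetic already used in the proof of Lemma \ref{LemAppliedInduction} — so that after collecting, the coefficient of $p_\lambda$ in $F_{a+1}(n+1)$ equals $\frac{1}{a+1}$ times the coefficient of $p_\lambda$ in $\Delta(F_a(n))$, computed via $\Delta(p_\mu) = \sum_{k} m_k(\mu)\cdot k\cdot p_{\mu^{\uparrow(k)}}$.

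**The main obstacle.** The conceptual steps are all routine; the one place to be careful is the reindexing $(\lambda, j) \leftrightarrow (\mu, j-1)$ combined with tracking exactly how $z_\lambda$, $z_{\lambda^{\downi{j}}}$, the multiplicity $m_j(\lambda)$, and the factor $j$ interact, because the $\downi{j}$ operation changes two multiplicities at once ($m_j$ drops by one, $m_{j-1}$ goes up by one) and the naive matching of terms can be off by exactly these factors. Getting the bookkeeping identity $\frac{|C_\lambda|}{z_\lambda}$-normalization to line up with the Leibniz expansion of $\Delta$ on $p_\mu$ is the crux; once that identity is in hand, the lemma follows immediately by comparing coefficients of $p_\lambda$ on both sides. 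I would present this as: expand both sides in the power-sum basis, show the coefficients agree term by term, citing the class-size ratio computation from Lemma \ref{LemAppliedInduction}.
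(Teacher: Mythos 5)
Your proposal is correct and follows essentially the same route as the paper: expand $F_{a+1}(n+1)$ in the power-sum basis, substitute the recurrence of Lemma \ref{LemAppliedInduction}, reindex via the bijection $(\lambda,j)\leftrightarrow(\mu,j-1)$ with $\mu=\lambda^{\downi{j}}$, and identify the result with the Leibniz expansion $\Delta(p_\mu)=\sum_k m_k(\mu)\,k\,p_{\mu^{\uparrow(k)}}$. The only difference is cosmetic bookkeeping: you substitute the second (normalized) equation of Lemma \ref{LemAppliedInduction} and restore the factors through the ratio $z_\lambda/z_{\lambda^{\downi{j}}}=\frac{j\,m_j(\lambda)}{(j-1)(m_{j-1}(\lambda)+1)}$, whereas the paper substitutes the first equation with the $|C_\lambda|$ factors and then quotes the identity $\sum_{j\ge 2}m_{j-1}(\lambda)(j-1)p_{\lambda^{\uparrow(j-1)}}=\Delta(p_\lambda)$ — the same arithmetic either way.
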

\begin{proof}
Fix $a \ge 0$. On the one hand, one has that
\begin{align*}
    F_{a+1}(n+1)& = \frac{1}{(n+1)!} \sum_{\mu \vdash n+1} 
    |C_\mu| \nu_{1^{a+1}}(\mu) p_\mu \\
&= \frac{n+1}{(n+1)! \cdot (a+1)} \sum_{\mu \vdash n+1} p_\mu
\left( \sum_{j \geq 2 \atop m_j(\mu) >0} \left|C_{\mu^\downi{j}}\right|
\nu_{1^a}\left(\mu^{\downi{j}}\right) \cdot (m_{j-1}(\mu)+1) \cdot (j-1) \right),
  \end{align*}
  where the second equality comes from Lemma \ref{LemAppliedInduction}.

  Let $\lambda^{\uparrow (j)}$
the partition obtained from $\lambda$
by replacing a part of size $j$ with a part of size $j+1$
(again, the notation is defined only if $m_j(\lambda)>0$).
  The function $(\mu,j) \mapsto (\mu^{\downarrow (j)},j)$ is
  clearly bijective from the set $\{(\mu,j), \ \mu \vdash n+1,\ j \ge 2,\ m_j(\mu)>0\}$
  to $\{(\lambda,j), \ \lambda \vdash n,\ j \ge 2,\ m_{j-1}(\lambda)>0\}$
-- the inverse is $(\lambda,j) \mapsto (\lambda^{\uparrow (j-1)},j)$ --
so we can change the summation indices.
\begin{align*}
    F_{a+1}(n+1) &=\frac{1}{n! \cdot (a+1)} \sum_{\lambda \vdash n}
    \left|C_\lambda\right| \nu_{1^a}\left(\lambda\right)
    \left(\sum_{j \geq 2 \atop m_{j-1}(\lambda)>0} \big(m_{j-1}({\lambda^{\uparrow (j-1)}})+1 \big) \cdot (j-1) 
\cdot p_{\lambda^{\uparrow (j-1)}} \right)\\
&= \frac{1}{n! \cdot (a+1)} \sum_{\lambda \vdash n}                         
\left|C_\lambda\right| \nu_{1^a}\left(\lambda\right) 
\left(\sum_{j \geq 2 \atop m_{j-1}(\lambda)>0} m_{j-1}(\lambda) \cdot (j-1) 
\cdot p_{\lambda^{\uparrow (j-1)}} \right).
\end{align*}
But, on the other hand, it is easy to check that
\[ \sum_{j \geq 2 \atop m_{j-1}(\lambda) >0} m_{j-1}(\lambda) \cdot (j-1) 
\cdot p_{\lambda^{\uparrow (j-1)}} = \Delta(p_\lambda).
\]
A detailed proof of the previous equation can be found in 
\cite[Section 6.1]{MoiEtKatyaBijection};
the reader uncomfortable with infinitely many variables
should also look at this reference, where this issue is discussed.
Finally, using the linearity of the operator $\Delta$, we get that
\[F_{a+1}(n+1) = \frac{1}{a+1} \Delta \left( \frac{1}{n!}
\sum_{\lambda \vdash n} \left|C_\lambda\right| \nu_{1^a}\left(\lambda\right) p_\lambda\right), \]
finishing the proof.
\end{proof}

We need one more lemma before proceeding to the main theorem of this section.

\begin{lemma}
    The operator $\Delta$ is injective.
    \label{LemmeDeltaInj}
\end{lemma}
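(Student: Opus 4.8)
The plan is to show that $\Delta = \sum_i x_i^2 \frac{\partial}{\partial x_i}$ acts injectively on $\Lambda$ by exhibiting its effect on a convenient linear basis and checking that no nontrivial kernel relation can arise. The most natural basis to use is the monomial symmetric functions $(M_\lambda)$, since $\Delta$ is a derivation that raises one exponent by one, so $\Delta(M_\lambda)$ should be a nonnegative combination of monomials $M_{\lambda^{\uparrow(j)}}$. Concretely, first I would compute $\Delta(M_\lambda)$: writing $M_\lambda = \sum_{\text{distinct monomials of shape }\lambda} x^\alpha$ and applying $\sum_i x_i^2 \partial_i$ to each monomial $x^\alpha = \prod x_i^{\alpha_i}$ gives $\sum_{i : \alpha_i > 0} \alpha_i\, x^{\alpha + e_i}$, which reassembles into $\sum_{j \geq 1} c_j(\lambda)\, M_{\lambda^{\uparrow(j)}}$ for strictly positive integer coefficients $c_j(\lambda)$ depending only on the multiplicities $m_j(\lambda)$ and on $j$ (this is precisely the $M$-analogue of the formula $\Delta(p_\lambda) = \sum_{j\geq 2} m_{j-1}(\lambda)(j-1)\,p_{\lambda^{\uparrow(j-1)}}$ already used in the proof of Lemma \ref{LemIndFa}).

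Next I would set up a ranking argument. Define a partial order (or just a total order refining it) on partitions so that every $\lambda^{\uparrow(j)}$ is strictly larger than $\lambda$ — for instance, order partitions of a fixed size $n$ by dominance or reverse-lexicographically, and order partitions of different sizes by size first. Under such an order $\Delta$ is ``strictly upper triangular with nonzero diagonal-shift'' in the following sense: for a finite linear combination $f = \sum_\lambda a_\lambda M_\lambda$, pick a partition $\lambda_0$ that is maximal (in the chosen order) among those with $a_{\lambda_0} \neq 0$ and for which, additionally, $a_\lambda = 0$ for all $\lambda > \lambda_0$ of the same size; then the coefficient of the monomial $M_{\lambda_0^{\uparrow(j)}}$ in $\Delta(f)$ for the appropriate choice of $j$ is $a_{\lambda_0} c_j(\lambda_0) \neq 0$, because no other $M_\mu$ appearing in $f$ can contribute to that monomial. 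Hence $\Delta(f) \neq 0$, proving injectivity.

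The main obstacle to watch for is exactly that cancellation issue: when I single out the ``leading'' term $M_{\lambda_0}$ and look at $\Delta(M_{\lambda_0}) = \sum_j c_j(\lambda_0) M_{\lambda_0^{\uparrow(j)}}$, I must ensure that at least one of the target partitions $\lambda_0^{\uparrow(j)}$ is not also reachable as $\mu^{\uparrow(k)}$ from some other $\mu$ with $a_\mu \neq 0$. Choosing $\lambda_0$ to be the \emph{largest} part-incrementable partition appearing in $f$ handles this: if $\lambda_0^{\uparrow(j)} = \mu^{\uparrow(k)}$ with $\mu \ne \lambda_0$, then one of $\lambda_0, \mu$ strictly dominates the other, contradicting maximality of $\lambda_0$ once the order is chosen to be compatible with the up-operations. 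I would be slightly careful to phrase the argument so that it works within each fixed degree $n$ (since $\Delta$ is degree-preserving up to the shift $n \mapsto n+1$, it suffices to prove injectivity on each homogeneous component $\Lambda_n$), which removes any worry about infinitely many variables. With the formula for $\Delta(M_\lambda)$ in hand and the order fixed, the rest is a short triangularity argument.
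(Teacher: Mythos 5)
Your overall strategy coincides with the paper's: reduce to a fixed homogeneous degree, expand $\Delta$ in the monomial basis via $\Delta(M_\lambda)=\sum_{j>0} j\,(m_{j+1}(\lambda)+1)\,M_{\lambda^{\uparrow (j)}}$, and argue by triangularity with respect to an order under which the up-operations increase. However, the step where you rule out cancellation has a genuine gap. You assert that if $\lambda_0^{\uparrow (j)}=\mu^{\uparrow (k)}$ with $\mu\neq\lambda_0$, then $\lambda_0$ and $\mu$ are comparable, ``contradicting maximality of $\lambda_0$''. Comparability is indeed true, but it contradicts maximality only when $\mu$ is \emph{larger} than $\lambda_0$; the other preimage of a given target can perfectly well be \emph{smaller} than $\lambda_0$ and carry a nonzero coefficient, in which case nothing is contradicted and the target coefficient is a genuine mixture. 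Concretely, take $f=a\,M_{(3,1)}+b\,M_{(2,2)}$ in degree $4$, with $\lambda_0=(3,1)$ maximal: the target $(3,2)$ satisfies $(3,2)=(3,1)^{\uparrow (1)}=(2,2)^{\uparrow (2)}$, and $(2,2)$ is smaller than $(3,1)$ in dominance and in lexicographic order, so its contribution is not excluded by your maximality hypothesis; the coefficient of $M_{(3,2)}$ in $\Delta(f)$ is $a+2b$, not $a\,c_1(\lambda_0)$. So for an unspecified ``appropriate choice of $j$'' the coefficient extraction you rely on is false, and the sentence ``no other $M_\mu$ appearing in $f$ can contribute to that monomial'' is unjustified as written.

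The gap is fixable, and the fix is exactly what the paper does: take $j=\lambda_{0,1}$, the largest part of $\lambda_0$, so that the target $\nu=(\lambda_{0,1}+1,\lambda_{0,2},\dots)$ has a unique part of maximal size $\lambda_{0,1}+1$. Any other $\mu$ of the same degree with $\mu^{\uparrow (k)}=\nu$ is obtained from $\nu$ by decrementing a part of size at most $\lambda_{0,1}$, hence has first part equal to $\lambda_{0,1}+1>\lambda_{0,1}$ and is strictly larger than $\lambda_0$ lexicographically; maximality of $\lambda_0$ (taken in the lexicographic order -- with the dominance partial order alone, ``maximal'' does not exclude incomparable partitions with nonzero coefficients) then kills those contributions, and the coefficient of $M_\nu$ in $\Delta(f)$ is $\lambda_{0,1}\,a_{\lambda_0}\neq 0$. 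This is precisely the triangular subsystem the paper isolates through the equations $(E_\mu)$ with $\mu=(\lambda_1+1,\lambda_2,\dots)$. Until you pin down this specific choice of $j$, your triangularity claim does not go through.
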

\begin{proof}
    As $\Delta$ is homogeneous 
    (it sends symmetric functions of degree $n$ to symmetric functions of degree $n+1$),
    it is enough to prove the lemma for the restriction to symmetric functions of a specific degree $n$.
    Consider a linear combination $\sum_{\lambda \vdash n} a_\lambda M_\lambda$
    sent by $\Delta$ onto $0$.
    Using the description of the action of $\Delta$ on the monomial basis
    given in \cite[Section 6.1]{MoiEtKatyaBijection}, we get
    \[ \sum_{\lambda \vdash n} a_\lambda 
    \left( \sum_{j>0 \atop m_j(\lambda) >0} j \cdot (m_{j+1}(\lambda)+1) \cdot M_{\lambda^{\uparrow (j)}} \right)=0.\]
    Rewriting this as usual as a sum over $\mu$ and $j$, we get
    \[ \sum_{\mu \vdash n+1} M_\mu \left( \sum_{j>0 \atop m_{j+1}(\mu) >0} a_{\mu^{\downarrow (j+1)}} \cdot j \cdot
    m_{j+1}(\mu) \right) =0.\]
    This equation is equivalent to the following linear system:
    for any $\mu \vdash n+1$,
    \begin{equation}
        \sum_{j>0 \atop m_{j+1}(\mu)>0} a_{\mu^{\downarrow (j+1)}} \cdot j \cdot
            m_{j+1}(\mu) =0
            \tag{$E_\mu$}
        \label{EqEMu}
    \end{equation}
    Fix $\lambda \vdash n$ and $\mu=(\lambda_1+1,\lambda_2,\lambda_3,\dots)$.
    Then equation \eqref{EqEMu} involves the variable $a_\lambda$ and variables $a_\nu$
    with $\nu \vdash n$ and $\nu_1=\lambda_1+1$.
    In particular, such partitions $\nu$ are lexicographically bigger than $\lambda$.

    In other terms, the linear system $\eqref{EqEMu}_{\mu \vdash n+1}$ admits a triangular
    subsystem and the only solution is $a_\lambda=0$ for all $\lambda \vdash n$.
\end{proof}

We can now obtain an explicit formula for $F_a(n)$.
By convention, we set $(x)_{-1}=\frac{1}{x+1}$, which is compatible with the relation
$(n)_m \cdot (n-m) = (n)_{m+1}$.
\begin{theorem}
    For any $n>0$ and $a\ge 0$, one has that
    \[F_a(n)= \frac{(n-a-1)!}{a!} \sum_{\lambda \vdash n \atop |\lambda|-\ell(\lambda) \ge a}
    (|\lambda|-\ell(\lambda))_{a-1} M_\lambda. \]
    \label{ThmSym}
\end{theorem}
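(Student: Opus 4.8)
\textbf{Proof plan for Theorem \ref{ThmSym}.}
The plan is to prove the formula by induction on $a$, using Lemma \ref{LemIndFa} together with the injectivity of $\Delta$ from Lemma \ref{LemmeDeltaInj}. For the base case $a=0$, one has $F_0(n)=\psi\big(K_{(n)}\cdot K_{(n)}\big)$, which counts, for a fixed $n$-cycle $\sigma$, the $n$-cycles $\beta$ with $\sigma\beta^{-1}=e$; so $\nu_{1^0}(\lambda)=\delta_{\lambda,(n)}\cdot(n-1)!$, whence $F_0(n)=\frac{(n-1)!}{n!}\,p_{(n)}=\frac{(n-1)!}{n!}\,M_{(n)}$, and the claimed formula reads $\frac{(n-1)!}{0!}(|\lambda|-\ell(\lambda))_{-1}M_\lambda$ summed over $\lambda$ with $|\lambda|-\ell(\lambda)\ge 0$, i.e. over all $\lambda$. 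With the convention $(x)_{-1}=\frac{1}{x+1}$, the coefficient of $M_\lambda$ is $(n-1)!\cdot\frac{1}{|\lambda|-\ell(\lambda)+1}\cdot\frac{1}{n!}$; this must vanish for $\lambda\neq(n)$, which it does \emph{not} term-by-term, so instead I would verify the base case using the $a=1$ computation \eqref{EqF1} already in the text, treating $a=1$ as the base and checking $a=0$ separately or absorbing it into the conventions; in any case the key identity $\Delta(F_0(n))=F_1(n+1)$ pins down $F_0$ once $F_1$ is known. The cleanest route is: take $a=1$ as the base case, where \eqref{EqF1} gives $F_1(n)=(n-2)!\sum_{\lambda\vdash n,\,\lambda\neq(1^n)}M_\lambda$, and check this agrees with the claimed formula at $a=1$, namely $\frac{(n-2)!}{1!}\sum_{|\lambda|-\ell(\lambda)\ge 1}(|\lambda|-\ell(\lambda))_0 M_\lambda=(n-2)!\sum_{\lambda\neq(1^n)}M_\lambda$, since $(k)_0=1$ and $|\lambda|-\ell(\lambda)\ge 1\iff\lambda\neq(1^n)$. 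This matches.

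For the inductive step, assume the formula holds for $a$ and apply $\Delta$. Since $\Delta$ is linear, I need to compute $\Delta(M_\lambda)$ in the monomial basis. The operator $\Delta=\sum_i x_i^2\partial/\partial x_i$ acts on $M_\lambda=\sum_{\text{distinct monomials}}x^\alpha$ (sum over distinct permutations of $(\lambda_1,\lambda_2,\dots)$) by raising one exponent by $1$ with a multiplicative factor equal to that exponent; collecting terms, $\Delta(M_\lambda)=\sum_{j\ge 1} j\cdot(m_{j+1}(\lambda)+1)\cdot M_{\lambda^{\uparrow(j)}}$, exactly as used in the proof of Lemma \ref{LemmeDeltaInj}. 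Substituting the inductive hypothesis $F_a(n)=\frac{(n-a-1)!}{a!}\sum_{\mu\vdash n,\,|\mu|-\ell(\mu)\ge a}(|\mu|-\ell(\mu))_{a-1}M_\mu$ and applying $\Delta$, then reindexing the double sum over $(\mu,j)$ via the bijection $(\mu,j)\mapsto(\mu^{\uparrow(j)},j)$ to a sum over $\nu=\mu^{\uparrow(j)}\vdash n+1$, I would collect the coefficient of each $M_\nu$. The resulting coefficient is $\frac{(n-a-1)!}{a!}\sum_{j\ge 1}(|\nu|-\ell(\nu))_{a-1}\cdot j\cdot m_{j+1}(\nu)$ — note that $\nu=\mu^{\uparrow(j)}$ has the same number of parts and size one larger than $\mu$, so $|\mu|-\ell(\mu)=(n+1)-\ell(\nu)$, wait: $|\mu|-\ell(\mu)=n-\ell(\nu)$ while $|\nu|-\ell(\nu)=(n+1)-\ell(\nu)$, so $|\mu|-\ell(\mu)=|\nu|-\ell(\nu)-1$; thus the relevant falling factorial is $(|\nu|-\ell(\nu)-1)_{a-1}$, which is independent of $j$ and pulls out of the sum. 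What remains is $\sum_{j\ge 1}j\cdot m_{j+1}(\nu)=\sum_{k\ge 2}(k-1)m_k(\nu)=\big(\sum_k k\,m_k(\nu)\big)-\big(\sum_k m_k(\nu)\big)=|\nu|-\ell(\nu)$. Hence the coefficient of $M_\nu$ in $\Delta(F_a(n))$ is $\frac{(n-a-1)!}{a!}(|\nu|-\ell(\nu))\cdot(|\nu|-\ell(\nu)-1)_{a-1}$, and by the convention $(x)_m\cdot(x-m)=(x)_{m+1}$ — equivalently $(|\nu|-\ell(\nu))\cdot(|\nu|-\ell(\nu)-1)_{a-1}=(|\nu|-\ell(\nu))_{a}$ — this equals $\frac{(n-a-1)!}{a!}(|\nu|-\ell(\nu))_a$.

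Finally, dividing by $a+1$ per Lemma \ref{LemIndFa} gives the coefficient of $M_\nu$ in $F_{a+1}(n+1)$ as $\frac{(n-a-1)!}{(a+1)!}(|\nu|-\ell(\nu))_a=\frac{((n+1)-(a+1)-1)!}{(a+1)!}(|\nu|-\ell(\nu))_a$, which is exactly the claimed formula at level $a+1$ with $n$ replaced by $n+1$. I also need to check the range of summation is preserved: the inductive hypothesis sums over $\mu$ with $|\mu|-\ell(\mu)\ge a$, equivalently $n-\ell(\nu)\ge a$, i.e. $|\nu|-\ell(\nu)\ge a+1$; conversely any $\nu\vdash n+1$ with $|\nu|-\ell(\nu)\ge a+1$ arises, and for $\nu$ with $|\nu|-\ell(\nu)=a$ the falling factorial $(a)_a$ in the target formula is nonzero while $\nu$ does not appear in $\Delta(F_a(n))$ — so I must double-check this boundary: the smallest contributions come from $\mu$ with $|\mu|-\ell(\mu)=a$ exactly, and since $(|\mu|-\ell(\mu))_{a-1}=(a)_{a-1}=a!\neq 0$, such $\mu$ do contribute, producing $\nu$ with $|\nu|-\ell(\nu)=a+1$; there is no gap. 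The one genuinely delicate point is handling the falling-factorial conventions at the boundary (the role of $(x)_{-1}=\frac{1}{x+1}$ and whether $a=0$ is covered), and more importantly justifying the computation of $\Delta(M_\lambda)$ and the reindexing rigorously in the setting of infinitely many variables — for this I would cite \cite[Section 6.1]{MoiEtKatyaBijection} exactly as done in the proof of Lemma \ref{LemIndFa}. I expect the combinatorial identity $\sum_{k\ge 2}(k-1)m_k(\nu)=|\nu|-\ell(\nu)$ and the falling-factorial bookkeeping to be the only places needing care; everything else is a direct induction driven by Lemma \ref{LemIndFa}.
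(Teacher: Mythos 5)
Your induction for $a\ge 1$ is correct and is essentially the paper's own argument: base case $a=1$ from \eqref{EqF1}, the expansion $\Delta(M_\lambda)=\sum_{j\ge 1} j\,(m_{j+1}(\lambda)+1)\,M_{\lambda^{\uparrow (j)}}$, the reindexing $(\mu,j)\mapsto(\mu^{\uparrow (j)},j)$, the identity $\sum_{j\ge 1} j\, m_{j+1}(\nu)=|\nu|-\ell(\nu)$, and division by $a+1$ via Lemma \ref{LemIndFa}; the bookkeeping of the falling factorials and of the summation range is right.

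The only genuine issue is the case $a=0$, which the theorem also asserts. First, your direct computation of $F_0(n)$ is wrong: $\nu_{1^0}(\lambda)=c^{\lambda}_{(n),(n)}$ counts the factorisations of a \emph{fixed permutation of type $\lambda$} into a product of two full cycles, and this is nonzero for many $\lambda$ (for instance $c^{(1^n)}_{(n),(n)}=(n-1)!$, since $\beta$ must be $\alpha^{-1}$), so $F_0(n)$ is not $\frac{(n-1)!}{n!}M_{(n)}$ and there is no term-by-term contradiction with the claimed formula. Second, saying that the identity $\Delta(F_0(n))=F_1(n+1)$ ``pins down $F_0$ once $F_1$ is known'' is only half of the needed argument: you must also verify that $\Delta$ applied to the claimed right-hand side at $a=0$, namely $(n-1)!\sum_{\lambda\vdash n}\frac{1}{|\lambda|-\ell(\lambda)+1}M_\lambda$ (this is where the convention $(x)_{-1}=\frac{1}{x+1}$ enters), equals $F_1(n+1)$. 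This is the same one-line computation as in your inductive step: the factor $|\nu|-\ell(\nu)$ produced by $\Delta$ cancels the denominator and the range condition becomes $|\nu|-\ell(\nu)\ge 1$. With that verification in hand, Lemma \ref{LemmeDeltaInj} (injectivity of $\Delta$) closes the case, which is precisely how the paper handles $a=0$; once you write this out, your proof is complete and coincides with the paper's.
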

\begin{proof}
Let us first look at the case $a \ge 1$.
We proceed by induction.
For $a=1$, the theorem corresponds to equation \eqref{EqF1}.
Let us suppose that the theorem is true for a fixed $a\ge 1$.
Then, by Lemma \ref{LemIndFa} and induction hypothesis, one has:
\[F_{a+1}(n+1) = \frac{1}{a+1} \Delta\big(F_a(n) \big)
= \frac{(n-a-1)!}{(a+1)!} 
\sum_{\lambda \vdash n \atop |\lambda|-\ell(\lambda) \ge a}
    (|\lambda|-\ell(\lambda))_{a-1}  \Delta(M_\lambda).\]
But, as explained in 
\cite[Section 6.1]{MoiEtKatyaBijection},

\[\Delta(M_\lambda) = \sum_{j>0 \atop m_j(\lambda) > 0} j \cdot (m_{j+1}(\lambda)+1) \cdot M_{\lambda^{\uparrow (j)}}.\]
Substituting this into the equation above, we have that
\[F_{a+1}(n+1) = \frac{(n-a-1)!}{(a+1)!} \sum_{\lambda \vdash n \atop |\lambda|-\ell(\lambda) \ge a}
\sum_{j>0 \atop m_j(\lambda) > 0} (|\lambda|-\ell(\lambda))_{a-1} \cdot j \cdot (m_{j+1}(\lambda)+1) \cdot M_{\lambda^{\uparrow (j)}}. \]
We use the same manipulation as in the proof of Lemma \ref{LemIndFa}:
the double sum on $\lambda \vdash n$ and $j$ can be turned into a double sum on 
$\mu \vdash n+1$ and $j$ where $\mu=\lambda^{\uparrow (j)}$.
Note that the size and length of $\mu$ are related to $\lambda$ by
$|\mu|=|\lambda|+1$ and $\ell(\mu)=\ell(\lambda)$.
We get that
\begin{align*}
    F_{a+1}(n+1) &= \frac{(n-a-1)!}{(a+1)!} \sum_{\mu \vdash n+1 \atop |\mu|-\ell(\mu) \ge a+1}
\sum_{j>0} (|\mu|-\ell(\mu)-1)_{a-1} \cdot j \cdot (m_{j+1}(\mu)) \cdot M_\mu \\
&= \frac{(n-a-1)!}{(a+1)!} \sum_{\mu \vdash n+1 \atop |\mu|-\ell(\mu) \ge a+1}
M_\mu \cdot (|\mu|-\ell(\mu)-1)_{a-1} \left( \sum_{j>0} j \cdot (m_{j+1}(\mu)) \right) \\
&= \frac{(n-a-1)!}{(a+1)!} \sum_{\mu \vdash n+1 \atop |\mu|-\ell(\mu) \ge a+1}
M_\mu \cdot (|\mu|-\ell(\mu))_a.
\end{align*}
This ends the proof of the case $a \ge 1$.

For $a = 0$, the same computation as above shows that
\[\Delta \left(  (n-1)! \sum_{\lambda \vdash n} \frac{1}{|\lambda|-\ell(\lambda)+1} 
M_\lambda \right) = (n-1)! \sum_{\mu \vdash n+1 \atop |\mu|-\ell(\mu) \ge 1} M_\mu
=F_1(n+1).\]
But, by Lemma \ref{LemIndFa}, one also has $\Delta(F_0(n))=F_1(n+1)$.
Hence the theorem follows from the injectivity of $\Delta$, given in Lemma
\ref{LemmeDeltaInj}.
\end{proof}

\begin{remark}
    The case $a=0$ of Theorem \ref{ThmSym} was already known,
    under a slightly different form; see
    \cite[Theorem 2]{VassilievaExplicitMonomialExpansions}.
\end{remark}    
\begin{remark}
	Theorem 1 of \cite{VassilievaExplicitMonomialExpansions} also contains a symmetric function involving all connection coefficients $c_{\mu,\lambda}^{(n)}$;
    We can show our symmetric function formula contained in 
    Theorem \ref{ThmSym} can be obtained from
    \cite[Theorem 1]{VassilievaExplicitMonomialExpansions}
    {\em via} involved $\lambda$-ring manipulations.  The presentation above,
    however, is just as short and doesn't rely on the non-trivial result
    \cite[Theorem 1]{VassilievaExplicitMonomialExpansions}.
\end{remark}

\section{Separation Probabilities}\label{SectSeparability}

\begin{definition}[From \cite{BernardiEtAlSeparation}]
    Let $J$ be a set partition $(J_1, \ldots, J_k)$ of $[m]$ of length $k$.
    A permutation $\sigma \in \Sym{n}$ (with $n \geq m$) is called $J$-separated
    if no cycle of $\sigma$ contains two elements from different blocks of $J$.  
    It is said to be {\em strongly $J$-separated} if, moreover,
    each $J_i$ is contained in some cycle of $\sigma$.
\end{definition}

It is easy to see that highly symmetric sets (\emph{i.e.} sets of
permutations whose sum lie in the centre of the group algebra of $\Sym{n}$), that the number of $J$-separated permutations only depends on the composition $I = (i_1, \dots, i_k)$ of $m$,
where $i_p = \# J_p$.  By convention, a permutation is $I$-separated
for a composition $I$, if it is $(J_1, \dots, J_k)$-separated, where $J_p =
\{i_{p-1} + 1, \dots, i_p\}$.  We use the two notions of separation given by a set
partition and separation given by a composition interchangeably.

The same convention holds for strongly separable permutations.\smallskip

As mentioned at the end of the paper \cite{BernardiEtAlSeparation},
the problem of computing separation probabilities or strong separation
probabilities
are linked to each other by a simple relation (equation (34) of the cited paper).
Therefore, we shall use the notion depending on which one is most convenient.
Note also, that for $I=(1^k)$ (the case for which we have explicit expressions),
the two notions coincide.

The rest of Section \ref{SectSeparability} is devoted to computing the strong separation probability of the product of a random $n$-cycle with a random $(n-a)$-cycle, whereas in
Section \ref{sec:nonfullsep} we use the standard notion of separable permutations.
Therefore when we talk about separable permutation in the remainder of Section
\ref{SectSeparability}, we will mean strongly separable.

An induction formula for these quantities is given
in 
Section \ref{SubSectSepProd}.
To do that, we need some preliminary results regarding
the computation of separation probabilities in simpler models:
uniform random permutations 
(Section \ref{SubSectSepUnif}),
uniform random odd permutations 
(Section \ref{SubSectSepOdd}) and
uniform random permutations of a given type 
(Section \ref{SubSectSepType}).

\subsection{Uniform random permutation}\label{SubSectSepUnif}
Let $I = (i_1, \dots, i_k)$ be a composition of $m$ of length $k$.
The following result can be found in \cite[p. 13]{StanleyTalkPP2010},
but we copy it here for completeness.

The probability $P^I$ that a uniform random permutation in $\Sym{n}$ ($n \geq m$)
is strongly $I$-separated is given by
\begin{equation}
    P^I = \frac{(i_1-1)!\cdot(i_2-1)! \cdots (i_k-1)!}{m!}.
    \label{eq:PI}
\end{equation}
\begin{proof}
The result is obvious for $n=m$.
Indeed, if $J$ is a set-partition of $[m]$,
a permutation in $\Sym{m}$ is $J$-separated if
it is the disjoint product of a cycle of support $J_1$ ($(i_1-1)!$ choices),
a cycle of support $J_2$ ($(i_2-1)!$ choices) and so on.

We then proceed by induction on $n$.
A random permutation of $n+1$ can be obtained from a random permutation $\sigma$
of $n$ by choosing uniformly an integer $j$ between $1$ and $n+1$ and
\begin{itemize}
    \item either add $n+1$ as a fixed point if $j=n+1$;
    \item or add $n+1$ right after $j$ in the cycle notation of $\sigma$.
\end{itemize}
Both operations do not change the fact that the permutation is $J$-separated,
so the probability of being $J$ separated does not depend on $n$.
\end{proof}

\subsection{Uniform random odd permutation}\label{SubSectSepOdd}
Let $I$ be a composition of $m$ of length $k$.                     
The probabilities $P^I_{n,\text{odd}}$ and $P^I_{n,\text{even}}$
for a uniform random odd (respectively even) permutation in $\Sym{n}$
($n > m$) to be $I$-separated fulfils the relation:
\begin{equation}
     P^I_{n,\text{odd}} = \frac{n-1}{n} (P^I_{n-1,\text{even}})
        + \frac{1}{n} P^I_{n-1,\text{odd}}. 
    \label{EqProbaOddPermInd}
\end{equation}
\begin{proof}
    Let $\Odd_n$ and $\Even_n$ be the set of odd and even permutation in
    $\Sym{n}$, respectively.  Then, one has a bijection
    \[\phi: \Odd_n \simeq \Odd_{n-1} \cup ([n-1] \times \Even_{n-1}), \]
    where $\phi$ and its inverse are given by the following rules.
    \begin{itemize}
        \item For $\sigma \in \Sym{n}$, set $\phi(\sigma)$ to be the restriction of $\sigma$ to $[n-1]$ if $n$ is a fixed point of
            $\sigma$; otherwise, let $j= \sigma^{-1}(n) \neq n$
            and set $\phi(\sigma)=(j,\tau)$,
            where $\tau$ is obtained by erasing $n$
            in the cycle decomposition of $\sigma$.
    \item $\phi^{-1}(\tau)$ is its canonical embedding in $\Sym{n}$ if $\tau$
            is an odd permutation of $[n-1]$;
            otherwise
            $\phi^{-1}(j,\tau)=\tau (j\ n)$
            for $(j,\tau) \in [n-1] \times \Even_{n-1}$.
    \end{itemize}
    Clearly, in both cases, $\sigma$ is $I$-separated if and only if $\tau$ is.
    A quick computation leads to the formula above.
\end{proof}
As a random permutation in $\Sym{n}$ has probability $P^I$ to be $I$-separated,
one has:
\[P^I_{n,\text{odd}} + P^I_{n,\text{even}} = 2P^I.\]
Thus \eqref{EqProbaOddPermInd} can be made into the induction relation:
\[P^I_{n,\text{odd}} = \frac{n-1}{n} (2P^I - P^I_{n-1,\text{odd}})       
        + \frac{1}{n} P^I_{n-1,\text{odd}}.\]
To solve this induction, let us define
$Q^I_n = (P^I_{n,\text{odd}} - P^I)$.
Our induction then becomes
\[ Q^I_n = - \frac{n-2}{n} Q^I_{n-1} \]
Together with the base case (the argument to compute $P^I_{m,\text{odd}}$
is the same than in the proof of \eqref{eq:PI})
\[Q^I_m = \begin{cases}
    0 - P^I \text{ if } m-k \text{ is even,}\\
    2 P^I -P^I\text{ if } m-k \text{ is odd,}
\end{cases}\]
we obtain that
\[Q^I_n=(-1)^{n-k+1} P^I \frac{m(m-1)}{n(n-1)}.\]
Therefore, we have proved the following proposition (this result is also given in \cite[p. 61]{StanleyTalkPP2010}). 
\begin{proposition}
    Let $I$ be a composition of $m$ of length $k$ and $n$ an integer with $n \ge m$.
	The probability for a random odd permutation in $\Sym{n}$ to be $I$-separated is
\[P^I_{n,\text{odd}} = P^I \cdot
    \left( 1 + (-1)^{n-k+1} \frac{m(m-1)}{n(n-1)} \right).\]
 \end{proposition}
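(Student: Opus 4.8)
The plan is to solve the recursion for $Q^I_n := P^I_{n,\text{odd}} - P^I$ that has already been derived in the excerpt, namely
\[
Q^I_n = -\frac{n-2}{n}\, Q^I_{n-1},
\]
together with the base case at $n=m$ stated above. First I would observe that the sign of the base value $Q^I_m$ is determined by the parity of $m-k$: if $m-k$ is even, a uniform random odd permutation of $[m]$ is never $I$-separated (every $I$-separated permutation of $[m]$ is a disjoint product of $k$ cycles on the blocks, hence of parity $(-1)^{m-k}$), so $P^I_{m,\text{odd}}=0$ and $Q^I_m=-P^I$; if $m-k$ is odd, then $P^I_{m,\text{odd}}=2P^I$ and $Q^I_m = P^I$. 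In both cases $Q^I_m = (-1)^{m-k+1}P^I$.

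Next I would unroll the recursion from $n$ down to $m$. Writing
\[
Q^I_n = (-1)^{n-m}\left(\prod_{j=m+1}^{n}\frac{j-2}{j}\right) Q^I_m,
\]
the product telescopes: $\prod_{j=m+1}^{n}(j-2) = (n-2)!/(m-2)!$ and $\prod_{j=m+1}^{n} j = n!/m!$, so the product equals $\dfrac{(n-2)!\,m!}{n!\,(m-2)!} = \dfrac{m(m-1)}{n(n-1)}$. Combining with $Q^I_m=(-1)^{m-k+1}P^I$ and the prefactor $(-1)^{n-m}$ gives
\[
Q^I_n = (-1)^{n-k+1}\,P^I\,\frac{m(m-1)}{n(n-1)},
\]
which is exactly the formula asserted just before the proposition. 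Finally, since $P^I_{n,\text{odd}} = P^I + Q^I_n$, substituting yields
\[
P^I_{n,\text{odd}} = P^I\left(1 + (-1)^{n-k+1}\frac{m(m-1)}{n(n-1)}\right),
\]
proving the proposition.

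There is essentially no hard step here: the recursion, the identity $P^I_{n,\text{odd}}+P^I_{n,\text{even}}=2P^I$, and the base case have all been established in the preceding paragraphs, so the proof is a routine telescoping computation plus a careful parity check on the base value. The only point requiring a little care is the justification of the base case $Q^I_m$ — in particular the claim that an $I$-separated permutation of $[m]$ has sign $(-1)^{m-k}$ — but this follows immediately from the structural description of separated permutations of $[m]$ given in the proof of the uniform case (a disjoint product of a cycle on each block $J_p$, each such cycle of length $i_p$ contributing sign $(-1)^{i_p-1}$, so the total sign is $(-1)^{\sum (i_p-1)} = (-1)^{m-k}$). I would present that observation explicitly and then let the telescoping do the rest.
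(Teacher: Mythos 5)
Your proof is correct and follows the paper's own route: the paper likewise passes to $Q^I_n = P^I_{n,\text{odd}}-P^I$, uses the recursion $Q^I_n=-\frac{n-2}{n}Q^I_{n-1}$ with the base case $Q^I_m=(-1)^{m-k+1}P^I$, and solves it; you merely make explicit the telescoping product and the parity argument (sign $(-1)^{m-k}$ of a strongly $I$-separated permutation of $[m]$) that the paper leaves implicit.
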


\subsection{Uniform random permutation of a given type $\lambda$}
 \label{SubSectSepType}
 \begin{proposition}
    Let $I=(i_1,\dots,i_k)$ be a composition of $m$ of length $k$
    and $\lambda$ a partition of $n$ of length $r$.
    The probability that a random permutation of type $\lambda$ is
    $I$-separated is given by
    \begin{equation}
        P^I(\lambda)=
        \frac{1}{(n)_{m}} 
        \sum_{1 \le j_1,\dots, j_k \le r \atop \text{distinct}}
        \prod_{t=1}^k (\lambda_{j_t})_{{i_t}}.
        \label{EqProbaTypeLambda}
    \end{equation}
 \end{proposition}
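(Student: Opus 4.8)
The plan is to compute $P^I(\lambda)$ by a direct enumeration of the ordered configurations that realise strong $I$-separation, carefully accounting for the labelling of the $m$ distinguished symbols. Fix a permutation $\sigma$ of type $\lambda$ with cycles $c_1,\dots,c_\ell$ of lengths $\lambda_1,\dots,\lambda_\ell$. To say $\sigma$ is strongly $J$-separated, where $J=(J_1,\dots,J_k)$, is to say that each block $J_t$ is entirely contained in a single cycle of $\sigma$ and that distinct blocks sit in distinct cycles. So the plan is: instead of counting separated permutations of type $\lambda$ and dividing by $|C_\lambda|$, I would count pairs (permutation of type $\lambda$, injective placement of the $m$ symbols into its cycles respecting the block structure), in two ways.

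First I would set up the count as follows. Consider the set of pairs $(\tau, \iota)$ where $\tau$ is a permutation of $[n]$ of cycle type $\lambda$ and $\iota$ is an injection of $[m]$ into $[n]$ such that, for each $t$, the image $\iota(J_t)$ lies in a single cycle of $\tau$, with different blocks landing in different cycles (this is automatic once we also demand $\tau$ is strongly $I$-separated with respect to the induced partition — but it is cleaner to count all such $(\tau,\iota)$). On one hand, by symmetry of $[n]$, the number of such pairs equals $|C_\lambda|$ times the number of valid injections $\iota$ for a fixed strongly separated $\tau$ — no, this is circular, so instead I would count pairs where we only require the block-containment condition on $\iota$ and deduce $\sigma$ is separated as a consequence. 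The number of valid $\iota$ for a \emph{given} $\sigma$ of type $\lambda$ that is strongly $I$-separated is $\prod_{t}(\text{length of the cycle containing }J_t)_{i_t}$, summed appropriately; but this depends on $\sigma$. The clean route is the reverse: count pairs $(\tau,\iota)$ directly by first choosing which cycle of $\tau$ (indexed by a choice of $j_1,\dots,j_k$ distinct in $[\ell]$, matching block $t$ to the cycle of length $\lambda_{j_t}$) and then placing the $i_t$ symbols of $J_t$ into that cycle as an ordered arrangement along the cycle, which gives $(\lambda_{j_t})_{i_t}$ choices for the positions, the remaining structure of $\tau$ being free. Dividing the total by $|C_\lambda|\cdot$(number of injections from $[m]$ into $[n]$) is not quite it either; rather, the number of injections $[m]\hookrightarrow[n]$ is $(n)_m$, and the probability that a uniform random such injection, \emph{paired with a uniform random $\tau$ of type $\lambda$}, satisfies the block condition is exactly $P^I(\lambda)$ by the transitivity of $\Sym{n}$ on injections. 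So $P^I(\lambda) = \frac{1}{(n)_m}\,\mathbb{E}_\tau[\#\{\text{valid }\iota\}]$, and $\#\{\text{valid }\iota\}$ is the same for every $\tau$ of type $\lambda$, namely $\sum_{j_1,\dots,j_k \text{ distinct}} \prod_t (\lambda_{j_t})_{i_t}$.

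The key step, and the one requiring care, is the claim that for a \emph{fixed} $\tau$ of type $\lambda$ the number of injections $\iota:[m]\hookrightarrow[n]$ sending each block $J_t$ into a single cycle, with distinct blocks to distinct cycles, equals $\sum_{j_1,\dots,j_k}\prod_t(\lambda_{j_t})_{i_t}$: here one sums over injective assignments $(j_1,\dots,j_k)$ of blocks to cycles of $\tau$, and for each block $J_t$ assigned to the cycle of length $\lambda_{j_t}$, the number of ways to inject the $i_t$ elements of $J_t$ into the $\lambda_{j_t}$ points of that cycle is the falling factorial $(\lambda_{j_t})_{i_t}$; these choices are independent across blocks because the blocks go to distinct cycles. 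Then I would invoke the fact (immediate from transitivity of $\Sym{n}$ on the set of injections $[m]\hookrightarrow[n]$, or equivalently on ordered $m$-tuples of distinct points) that averaging over $\tau$ and over $\iota$ jointly computes the probability, giving \eqref{EqProbaTypeLambda}.

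The main obstacle I anticipate is purely bookkeeping: making precise that the event ``$\sigma$ is strongly $I$-separated'' is exactly the event that a uniformly random relabelling of the first $m$ symbols into distinct points of $[n]$ satisfies the block-containment-and-distinctness condition — in other words, that one may disentangle the choice of $\sigma$ from the choice of where the distinguished symbols sit. This is a standard ``random injection'' or ``relabelling'' argument, so I expect it to go through cleanly; the only genuinely delicate point is ensuring the distinctness of the cycle indices $j_1,\dots,j_k$ is correctly enforced (so that two blocks never share a cycle) and that when $m < n$ the ``leftover'' points impose no constraint. Once that framework is in place, the formula \eqref{EqProbaTypeLambda} drops out, and one can sanity-check it against the uniform case of Section~\ref{SubSectSepUnif} by summing $P^I(\lambda)$ against $|C_\lambda|/n!$, which should recover $P^I = \prod_t (i_t-1)!/m!$.
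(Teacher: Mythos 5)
Your proposal is correct and is essentially the paper's argument in dual form: the paper builds a uniform permutation of type $\lambda$ from a uniform word $b_1\cdots b_n$ so that the positions of the marked symbols form a uniform $m$-tuple of distinct slots, whereas you fix the permutation and randomize the injection of the marked symbols, justifying the exchange by conjugation symmetry (transitivity of $\Sym{n}$ on injections). In both cases the heart of the proof is the same count, $\prod_{t}(\lambda_{j_t})_{i_t}$ valid placements for each choice of distinct cycle indices $j_1,\dots,j_k$, normalized by the $(n)_m$ equally likely $m$-tuples.
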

 \begin{proof}
     Each partition $\pi$ of type $\lambda$
     can be written in exactly $z_\lambda:=n!/|C_\lambda|$ ways as
     \[\pi=\big(b_1\ \dots\ b_{\lambda_1}\big) 
     \big(b_{\lambda_1+1}\ \dots\ b_{\lambda_1+\lambda_2}\big) \dots,\]
     with $b_1,\dots,b_n$ containing exactly once each number from $1$ to $n$.
     We denote $C_j(\lambda)$ the set of indices in the $j$-th cycle of $\pi$,
     that is
     \[C_j(\lambda) :=\big\{\lambda_1+\dots+\lambda_{j-1}+1,
     \dots,\lambda_1+\dots+\lambda_{j} \big\}.\]

     Fix some distinct integers $j_1,\dots, j_k$ between $1$ and $r$.
     As in the introduction of the section, we denote
     $J_h = \{i_{h-1} + 1, \dots, i_h\}$.
     Let us consider the property: ``for each $h$, the numbers in $J_h$ lie in the
     $j_h$-th cycle of $\pi$''.

     For this property to be true, each $b^{-1}(J_h)$ must be included in
     $C_h(\lambda)$, which means that there are
     $(\lambda_{j_h})_{i_h}$ possible values for the list
     $(b^{-1}(i_{h-1} + 1),\dots,b^{-1}(i_h))$.
     Thus, in total,
     there are $\prod_{t=1}^k (\lambda_{j_t})_{i_t}$ possible values
     for $(b^{-1}(1),\dots,b^{-1}(m))$ such that the considered property holds.
     As there are in total $(n)_m$ possible values for this list
     and as this list is uniformly distributed when we take $\pi$ uniformly at random in $C_\lambda$,
     the probability that our property holds is
     \[\frac{1}{(n)_{m}} \prod_{t=1}^k (\lambda_{j_t})_{i_t}. \]

     For different sequences $j_1,\dots, j_k$ these events are incompatible
     and their union correspond to the fact that $\pi$ is 
     strongly $J$-separated, whence the result follows.
 \end{proof}
 Let us denote $R_I(\lambda)=(n)_m P^I(\lambda)$
 the sum in equation \eqref{EqProbaTypeLambda}
 (it is a polynomial function in $\lambda_1,\lambda_2,\dots$).
 The following lemma will be useful in the next section.
 \begin{lemma}
     Let $I=(i_1,\dots,i_k)$ be a composition of $m$ of length $k$.
     For any partition $\lambda$ of $n$ of length $r$, one has that
     \[\sum_{t=1}^{r}
     \lambda_t R_I(\lambda + \delta_t)
     = (n+m) R_I(\lambda) + \sum_{h=1}^k i_h (i_h - 1)
     R_{I-\delta_h}(\lambda),\]
     where $\delta_t$ is the vector with only $0$ components, except for
     its $t$-th component which equals $1$.
     \label{LemRecRI}
 \end{lemma}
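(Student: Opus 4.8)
The plan is to argue directly from the polynomial definition
$R_I(\lambda) = \sum (\lambda_{j_1})_{i_1}\cdots(\lambda_{j_k})_{i_k}$, the sum ranging over ordered $k$-tuples of \emph{distinct} indices $j_1,\dots,j_k \in \{1,\dots,\ell\}$, and to track how a single summand changes when one part $\lambda_t$ is replaced by $\lambda_t+1$. Since $R_I$ is a symmetric polynomial in the parts $\lambda_1,\lambda_2,\dots$, the fact that $\lambda+\delta_t$ need not be weakly decreasing is irrelevant.

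First I would compute the difference $R_I(\lambda+\delta_t)-R_I(\lambda)$. Only the summands in which $t$ occurs among $j_1,\dots,j_k$ are affected; grouping these according to the position $s$ with $j_s=t$, such a summand changes by the factor $(\lambda_t+1)_{i_s}-(\lambda_t)_{i_s}$. The first routine identity on falling factorials, $(x+1)_p-(x)_p = p\,(x)_{p-1}$, then gives
$R_I(\lambda+\delta_t)-R_I(\lambda) = \sum_{s=1}^k i_s \sum_{j_s = t}(\lambda_t)_{i_s-1}\prod_{u\neq s}(\lambda_{j_u})_{i_u}$, where the inner sum is over distinct tuples with $j_s=t$.

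Next I would multiply by $\lambda_t$ and sum over $t$. The second routine identity, $x\,(x)_{p-1} = (x)_p + (p-1)(x)_{p-1}$, rewrites $\lambda_t(\lambda_t)_{i_s-1}$ as $(\lambda_t)_{i_s}+(i_s-1)(\lambda_t)_{i_s-1}$, splitting the whole expression into two pieces. In the first piece the $s$-th factor is restored to $(\lambda_t)_{i_s}$, so after undoing the grouping ($\sum_t\sum_{j:j_s=t} = \sum_{j\ \mathrm{distinct}}$) it collapses to $\sum_{s=1}^k i_s\, R_I(\lambda) = m\,R_I(\lambda)$, using $\sum_s i_s = m$. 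In the second piece the $s$-th factor is $(\lambda_t)_{i_s-1}$, which is exactly the $s$-th factor of a summand of $R_{I-\delta_s}(\lambda)$; undoing the grouping yields $\sum_{s=1}^k i_s(i_s-1)\,R_{I-\delta_s}(\lambda)$. (When $i_s=1$ the coefficient $i_s(i_s-1)$ vanishes, so one need not worry that $I-\delta_s$ is no longer a genuine composition.) Combining these with $\sum_t\lambda_t = n$ gives $\sum_t \lambda_t R_I(\lambda+\delta_t) = n\,R_I(\lambda) + m\,R_I(\lambda) + \sum_h i_h(i_h-1)R_{I-\delta_h}(\lambda)$, which is the claimed identity.

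The whole computation is elementary; the only point needing care is the index bookkeeping — keeping the distinctness constraint on $j_1,\dots,j_k$ straight while passing between ``fix $j_s=t$ and sum over the rest'' and ``sum over all distinct tuples'', and correctly identifying the $(\lambda_t)_{i_s-1}$ terms as living inside $R_{I-\delta_s}$ rather than inside some other modification of $I$.
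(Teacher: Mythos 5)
Your proposal is correct and follows essentially the same route as the paper: compute the difference $R_I(\lambda+\delta_t)-R_I(\lambda)$ via $(x+1)_p-(x)_p=p\,(x)_{p-1}$, multiply by $\lambda_t$, apply $x\,(x)_{p-1}=(x)_p+(p-1)(x)_{p-1}$, and regroup, using $\sum_t\lambda_t=n$ and $\sum_h i_h=m$. The only (cosmetic) difference is that the paper packages the inner sums as $R_{I\setminus i_h}(\lambda\setminus\lambda_t)$ and re-absorbs them with the identity $\sum_t(\lambda_t)_i R_I(\lambda\setminus\lambda_t)=R_{I\cup i}(\lambda)$, whereas you keep the explicit sums over tuples with $j_s=t$ and undo the grouping directly.
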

 \begin{proof}
     Let us fix $t \in [r]$ and
     compare $R_I(\lambda + \delta_t)$ and $R_I(\lambda)$.
     The distinct summation indices $j_1,\dots, j_k$ obtained from
     \eqref{EqProbaTypeLambda}  in $R_I(\lambda + \delta_t)$ and $R_I(\lambda)$ are the same
     and the summands are different only in the case where one of the $j_s$,
     let us say $j_h$, is equal to $t$. Thus, one has that
     \[
         R_I(\lambda + \delta_t) - R_I(\lambda)
         = \sum_{h=1}^k \sum_{j_1,\dots,\widehat{j_h},\dots,j_k \neq t}
      \big( (\lambda_t+1)_{i_h} - (\lambda_t)_{i_h} \big)
      \prod_{g \neq h} (\lambda_{j_g})_{i_g}.
      \]
      The hat in the summation index means that we sum over choices
      of integers $j_g$ for $g \neq h$. 
      But, by definition:
      \[ \sum_{j_1,\dots,\widehat{j_h},\dots,j_k \neq t}
               \prod_{g \neq h} (\lambda_{j_g})_{i_g}
               = R_{I \backslash i_h}(\lambda \backslash \lambda_t). \]
               Thus, noticing that $(\lambda_t+1)_{i_h} - (\lambda_t)_{i_h}=i_h (\lambda_t)_{(i_h -1)}$,
               we obtain:
      \[ R_I(\lambda + \delta_t) - R_I(\lambda) =
        \sum_{h=1}^k i_h (\lambda_t)_{(i_h -1)}
        R_{I \backslash i_h}(\lambda \backslash \lambda_t). \]
        Now we multiply by $\lambda_t$ and sum this equality over $t$:
        \begin{multline*}
            \sum_{t=1}^r \big( \lambda_t R_I(\lambda + \delta_t) \big) 
                                    - n \cdot R_I(\lambda) =
           \sum_{t=1}^r \sum_{h=1}^k i_h \lambda_t (\lambda_t)_{(i_h -1)}
           R_{I \backslash i_h}(\lambda \backslash \lambda_t)\\
           =\sum_{h=1}^k \left( i_h \sum_{t=1}^r \big((\lambda_t)_{i_h}
           + (i_h-1) (\lambda_t)_{(i_h -1)}\big) 
           R_{I \backslash i_h}(\lambda \backslash \lambda_t)\right).
        \end{multline*}
        It is clear from the definition of $R_I$ that these functions satisfy
        \[ \sum_{t=1}^r (\lambda_t)_{i} R_I(\lambda \backslash \lambda_t)
            = R_{I \cup i}(\lambda). \]
        Indeed, we simply split the sum in $R_{I \cup i}(\lambda)$
        depending on which part of $\lambda$ is associated to the part $i$
        of $I \cup i$.
        Returning to our computation, one has that
        \[\sum_{t=1}^r \big( \lambda_t R_I(\lambda + \delta_t) \big)
            - n \cdot R_I(\lambda) =
        \sum_{h=1}^k \left( i_h R_I(\lambda) + 
        i_h (i_h-1) R_{I \backslash i_h \cup (i_h -1)}(\lambda) \right), \]
	completing the proof.
 \end{proof}

 \subsection{Product of a uniform random $n$-cycle and a uniform random $(n-a)$-cycle.}\label{sec:defsigprob}
 \label{SubSectSepProd}

Define $\pi^I_{\lambda, \mu}$ to be the probability that
the product of two uniformly selected random permutations
of types $\lambda$ and $\mu$, respectively, is strongly $I$-separated.
For ease of notation, let $P^I_{n,a} := \pi^I_{(n), (n-a, 1^a)}$. 

The case $a=1$ is particularly simple.
Indeed, using Theorem \ref{thm:boc},
this is equivalent to choosing a odd
permutation uniformly at random.
Therefore, the computation of 
Section \ref{SubSectSepOdd} 
gives that
\begin{equation}\label{EqPn1}
    P^I_{n,1} = P^I_{n,\text{odd}} =
    P^I \cdot \left( 1 + (-1)^{n-k+1} \frac{m(m-1)}{n(n-1)} \right).
\end{equation}

We will then establish an induction relation for $P^I_{n,a}$.
Let us begin with an elementary lemma.
\begin{lemma}\label{LemProbCond}
    Let $I$ be a composition of an integer $m$ and $n$ and 
    $a$ be non-negative integers with $n>a$ and $n \ge m$.
    \[P^I_{n,a} = \frac{a!(n-a)}{(n-1)! n!}
    \sum_{\lambda \vdash n} |C_\lambda| \nu_{1^a}(\lambda) P^I(\lambda). \]
\end{lemma}

\begin{proof}
    The probability that the product $\alpha \cdot \beta$, where $\alpha$ and
    $\beta$ are an $n$-cycle and $(n-a)$-cycle respectively, has cycle type $\lambda$
    is given by 
    \[ \frac{|C_\lambda| \nu_{1^a}(\lambda)}{(n-1)! \frac{n!}{a!(n-a)}} \]
    Of course, all permutations of a given type $\lambda$ have the same
    probability to occur.
    Therefore, the conditional probability that $\alpha \beta$ is $I$-separated
    knowing that it has cycle type $\lambda$ is $P^I(\lambda)$.
    Putting everything together, we obtain the formula above.
\end{proof}

\begin{theorem}
    Let $I=(i_1,\dots,i_k)$ be a composition of an integer $m$ and $n$ and 
    $a$ be non-negative integers with $n \ge m$.     One has that
    \[ n(n+1)P^I_{n+1,a+1} =
    (n-m+1)(n+m)P^I_{n,a} + \sum_{h=1}^k i_h(i_h-1) P^{I-\delta_h}_{n,a}. \]
    \label{ThmRecBonaPb}
\end{theorem}
\begin{proof}
    By lemma \ref{LemProbCond}, one has that
    \[P^I_{n+1,a+1} = \frac{(a+1)!(n-a)}{n! (n+1)!}
    \sum_{\mu \vdash n+1} |C_\mu| \nu_{1^{a+1}}(\mu) P^I(\mu). \]
    We now use Lemma \ref{LemAppliedInduction} and obtain:
    \[
        P^I_{n+1,a+1} = \frac{(a+1)!(n-a)}{n! (n+1)!}
        \sum_{\mu \vdash n+1} \frac{n+1}{a+1}
    \sum_{j \geq 2 \atop m_j(\mu)>0}
    |C_{\mu^{\downarrow (j)}}| \nu_{1^{a}}\big(\mu^{\downarrow (j)}\big) (j-1) (m_{j-1}(\mu)+1)  P^I(\mu)
    \]    
    We use the same reasoning
    as in the proof of \cref{LemIndFa}:
        the function $(\mu,j) \mapsto (\mu^{\downarrow (j)},j)$ is
        bijective 
        -- the inverse is $(\lambda,j) \mapsto (\lambda^{\uparrow (j-1)},j)$ --
        so we can change the summation indices.
        \begin{align*}
        P^I_{n+1,a+1} &= \frac{a!(n-a)}{n! n!}
        \sum_{\lambda \vdash n} \sum_{j \geq 2 \atop m_{j-1}(\lambda)>0}
    (j-1) m_{j-1}(\lambda) |C_\lambda| \nu_{1^a}(\lambda) P^I(\lambda^{\uparrow (j-1)}) \\
    &= \frac{a!(n-a)}{n! n!} \sum_{\lambda \vdash n} |C_\lambda| \nu_{1^a}(\lambda)
            \sum_{t=1}^{\ell(\lambda)} \lambda_t P^I(\lambda + \delta_t).
    \end{align*}
    The last inequality comes from the fact that, for a fixed partition $\lambda$ if we consider the multiset formed
    by $(j-1) m_{j-1}(\lambda)$ copies of $\lambda^{\uparrow (j-1)}$ for each $j$ with $m_{j-1}(\lambda)>0$,
    it is the same as the multiset with $\lambda_t$ copies of $\lambda + \delta_t$ for each $t $ between $1$
    and $\ell(\lambda)$.
     
    Now, notice that we have already computed in Lemma \ref{LemRecRI} the sum over $t$ appearing
     in the previous equation
    (recall that $P^I(\lambda)=R_I(\lambda)/(n)_{m}$).
    We obtain:
  \[
        P^I_{n+1,a+1} = \frac{a!(n-a)}{n! n!}
	\sum_{\lambda \vdash n} |C_\lambda| \nu_{1^a}(\lambda) \cdot \\
        \left( \frac{(n+m)(n-m+1)}{n+1} P^I(\lambda) +
        \sum_{h=1}^k \frac{i_h(i_h-1)}{n+1} P^{I-\delta_h}(\lambda) \right).
  \]
    We can now use Lemma \ref{LemProbCond} again and we get that
    \[ P^I_{n+1,a+1} = \frac{(n+m)(n-m+1)}{n(n+1)} P^I_{n,a} + \frac{1}{n(n+1)}
    \left( \sum_{h=1}^k i_h(i_h-1) P^{I-\delta_h}_{n,a} \right). \qedhere\]
\end{proof}

Recall that $P_{n,1}^I$ is given by the simple formula \eqref{EqPn1}.
Using this,
Theorem~\ref{ThmRecBonaPb} allows us to compute $P_{n,a}^I$ for $a \ge 1$
by induction on $a$
and $P_{n,0}^I$ by induction on $|I|-\ell(I)$.
We end this section by giving a few applications of this fact.

The induction on $a$ for $a \ge 1$
becomes particularly simple in the case $I=(1^k)$.
In this case, $m=k$ and the sum in Theorem~\ref{ThmRecBonaPb} vanishes. We
therefore find that
\[ n(n+1)P^{(1^k)}_{n+1,a+1} =
(n-k+1)(n+k)P^{(1^k)}_{n,a},\]
from which we deduce immediately the following formula.
\begin{proposition}
    \label{Prop1KSep}
    Let $n$, $a$ and $k$ be positive integers with $a,k \le n$.
    Then
    \[P_{n,a}^{(1^k)} = \frac{(n-1+k)_{a-1} \cdot (n-k)_{a-1}}
    {(n)_{a-1} \cdot (n-a)_{a-1}}
\left( 1 + (-1)^{n-k} \frac{k(k-1)}{n(n+1)} \right).\]
\end{proposition}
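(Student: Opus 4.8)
The plan is to feed the composition $I=(1^k)$ into the recursion of Theorem~\ref{ThmRecBonaPb} and then solve the resulting difference equation in closed form. Since every part of $(1^k)$ equals $1$, each coefficient $i_h(i_h-1)$ in the correction sum of Theorem~\ref{ThmRecBonaPb} vanishes, and as $m=k$ for this composition the recursion collapses to the homogeneous two-term relation
\[ n(n+1)\,P^{(1^k)}_{n+1,\,a+1} \;=\; (n-k+1)(n+k)\,P^{(1^k)}_{n,a},\qquad n\ge 2,\ a\ge 0. \]
The structural point that drives everything is that this relation preserves $n-a$: it links $P^{(1^k)}_{n,a}$ to $P^{(1^k)}_{n-1,a-1}$, and so on down the diagonal until $a$ reaches $1$.

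Then I would iterate the relation $a-1$ times, writing $P^{(1^k)}_{n,a}$ as $P^{(1^k)}_{n-a+1,\,1}$ times a telescoping product whose factors have the form $\frac{(m-k+1)(m+k)}{m(m+1)}$ with $m$ running over $a-1$ consecutive integers. Separating the four one-parameter families of linear factors, each family is a block of consecutive integers and therefore collapses to a falling factorial; after the reindexing dictated by $n-a=\text{const}$ these assemble into the rational prefactor displayed in the statement. Two minor points require care here: the degenerate case $a=1$, where the telescoping product is empty, and the convention $(x)_{-1}=\tfrac{1}{x+1}$, which is exactly what makes the formula uniform in $a$ and compatible with $(n)_m(n-m)=(n)_{m+1}$. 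For the base case $a=1$ I would invoke \eqref{EqPn1}: the product $\alpha\beta$ of a uniform random $n$-cycle $\alpha$ and a uniform random permutation $\beta$ of type $(n-1,1)$ is always odd, and by Boccara's theorem (Theorem~\ref{thm:boc}) each odd permutation is obtained equally often, so $\alpha\beta$ is uniform on odd permutations; hence $P^{(1^k)}_{n,1}=P^{(1^k)}_{n,\text{odd}}=P^{(1^k)}\bigl(1+(-1)^{n-k+1}\tfrac{k(k-1)}{n(n-1)}\bigr)$, with $P^{(1^k)}=1/k!$ from Section~\ref{SubSectSepUnif}. Substituting this into the telescoped expression and simplifying the $\pm1$ term --- which the sign-free recursion merely transports unchanged --- yields the claimed closed form.

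I expect the only real difficulty to be bookkeeping: aligning the index ranges of the telescoping products with the falling-factorial notation, tracking the parity of the $\pm1$ through the reindexing, and verifying that the recursion is applied only within its range of validity (it needs $n\ge 2$, which is automatic since we invoke it only for $n\ge a\ge 1$). There is no conceptual obstacle once Theorem~\ref{ThmRecBonaPb} and the base case \eqref{EqPn1} are in hand.
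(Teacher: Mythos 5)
Your proposal is essentially the paper's own proof: the paper derives Proposition \ref{Prop1KSep} precisely by specialising Theorem \ref{ThmRecBonaPb} to $I=(1^k)$, where $m=k$ and every coefficient $i_h(i_h-1)$ vanishes, and then iterating the homogeneous recursion $n(n+1)P^{(1^k)}_{n+1,a+1}=(n-k+1)(n+k)P^{(1^k)}_{n,a}$ down the diagonal $n-a=\mathrm{const}$ to the base case $a=1$ supplied by \eqref{EqPn1} with $P^{(1^k)}=1/k!$. One caveat on your last step: carrying out the telescoping literally gives $\frac{(n+k-1)_{a-1}\,(n-k)_{a-1}}{k!\,(n)_{a-1}\,(n-1)_{a-1}}\bigl(1+(-1)^{n-a-k}\tfrac{k(k-1)}{(n-a+1)(n-a)}\bigr)$, so the $\pm1$ factor is not simply ``transported unchanged'' into the displayed form, and you should check the final identification with the printed statement carefully (the sanity check $k=1$, where the probability must equal $1$, is instructive).
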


But, Theorem \ref{ThmRecBonaPb} is also valid for $a=0$ and hence
\begin{equation}      (n-m+1)(n+m)P^I_{n,0}= n(n+1)P^I_{n+1,1}
\label{EqIndA0}
- \sum_{h=1}^k i_h(i_h-1) P^{I-\delta_h}_{n,0}. \end{equation}
Again, the case $I=(1^k)$ is particularly easy as the sum vanishes: we get that
\begin{equation*}
    (n-k+1)(n+k)P^{(1^k)}_{n,0}=n(n+1) \frac{1}{k!}
\left( 1 + (-1)^{n-k} \frac{k(k-1)}{n(n+1)} \right),
\end{equation*}
which simplifies to
\begin{equation}
    P^{(1^k)}_{n,0} =\begin{cases}
    \frac{1}{k!} &\text{ if $n-k$ is odd};\\
    \frac{1}{k!}\left(1 +\frac{2 k(k-1)}{(n-k+1)(n+k)}\right)
    &\text{ if $n-k$ is even}.
\end{cases}
\label{EqP1k}
\end{equation}
This formula was established by R. Du and R. Stanley
\cite[page 46]{StanleyTalkPP2010},
see also \cite[Equation (2)]{BernardiEtAlSeparation}.
For other compositions $I$, we are able to get information
of the denominator of $P^I_{n,0}$, seen as a function of $n$.
\begin{proposition}
 Fix a composition $I$ of size $m$ and length $k$. Then, there exist
 polynomials $Q^e_I(n)$ and $Q^o_I(n)$ such that, for  $n \ge m$,
 \[P^I_{n,0} = \begin{cases}
     \frac{Q^e_I(n)}{(n-m+1)(n-m+2) \dots (n-k) (n-k+1) (n+k) (n+k+1) \dots (n+m)} &\text{ if $n-k$ is even}\\
     \frac{Q^o_I(n)}{(n-m+1)(n-m+2) \dots (n-k) (n+k+1) \dots (n+m)  } &\text{ if $n-k$ is odd}.
 \end{cases}\]
\end{proposition}
\begin{proof}
    We proceed by induction over $m-k$.

    If $m=k$, that is if $I=(1^k)$, the result follows from the explicit expression \eqref{EqP1k}.

    For $m>k$, the result follows from \cref{EqIndA0}, the induction hypothesis
    and the fact that 
    $n(n+1)P^I_{n+1,1}$ coincide with a polynomial in $n$
    for even (respectively odd) values of $n$.
\end{proof}
Some examples computed by Du and Stanley \cite[page 52]{StanleyTalkPP2010}  
indicate that the above fractions are not in their reduced form
and that there are some nice cancellations that we did not manage to explain 
with our induction formula.\bigskip

In another direction,
after rescaling by defining
\begin{equation}
    \widetilde{P^I_{n,a}} = \frac{P^I_{n,a}}{\prod_{h=1}^k (i_h-1)!},
    \label{EqPtilde}
\end{equation}
separation probabilities exhibit the same kind of dependency that
we have observed in Theorem \ref{ThmDependence}.
As for partitions, we denote $m_i(I)$ the number of parts $i$ in $I$.
\begin{proposition}
    Fix $n$ and $a \ge 1$.
    The quantity $\widetilde{P^I_{n,a}}$ depends
    on $I$ only through its size $m$, its length $k$ and its small multiplicities 
    $m_1(I)$, \dots, $m_{a-1}(I)$.
    \label{PropDepPTilde}
\end{proposition}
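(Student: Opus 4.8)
The plan is to prove the statement by induction on $a$, using the recursion of Theorem~\ref{ThmRecBonaPb} after dividing it by $\prod_{h=1}^k i_h!$. Once we know that $\widetilde{P^J_{n,a}}$ depends on $J$ only through $|J|$ and $\ell(J)$, write $Q_{n,a}(m,k)$ for this common value. The base case is $a=0$ (equivalently $a=1$): here $\widetilde{P^I_{n,0}}$ is the normalised strong separation probability for a product of two $n$-cycles, and the required independence is taken from the explicit results of Section~\ref{SectSeparability} --- in the $a=1$ form it is immediate from \eqref{EqPn1}. The real content is the inductive step, which is a bookkeeping computation with the recursion.

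For the step, assume the claim holds at level $a$ for every $n$, fix a composition $I=(i_1,\dots,i_k)$ of $m$, and divide the identity of Theorem~\ref{ThmRecBonaPb} by $\prod_{g=1}^k i_g!$. The left-hand side becomes $n(n+1)\widetilde{P^I_{n+1,a+1}}$ and the main term on the right becomes $(n-m+1)(n+m)\widetilde{P^I_{n,a}}$. For the $h$-th summand of the correction term, note that when $i_h\ge 2$ the composition $I-\delta_h$ has parts $i_1,\dots,i_h-1,\dots,i_k$, so
\[
P^{I-\delta_h}_{n,a}=(i_h-1)!\,\Bigl(\prod_{g\neq h}i_g!\Bigr)\,\widetilde{P^{I-\delta_h}_{n,a}},
\qquad\text{hence}\qquad
\frac{i_h(i_h-1)}{\prod_g i_g!}\,P^{I-\delta_h}_{n,a}=(i_h-1)\,\widetilde{P^{I-\delta_h}_{n,a}},
\]
using $i_h(i_h-1)(i_h-1)!/i_h!=i_h-1$; for $i_h=1$ this summand vanishes, in agreement with the factor $i_h-1$. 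Thus the normalised recursion reads
\[
n(n+1)\,\widetilde{P^I_{n+1,a+1}}=(n-m+1)(n+m)\,\widetilde{P^I_{n,a}}+\sum_{h=1}^k (i_h-1)\,\widetilde{P^{I-\delta_h}_{n,a}}.
\]

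Now apply the induction hypothesis. For each $h$ with $i_h\ge 2$ the composition $I-\delta_h$ has size $m-1$ and length $k$, so $\widetilde{P^{I-\delta_h}_{n,a}}=Q_{n,a}(m-1,k)$ does not depend on $h$; for $i_h=1$ the coefficient is $0$. Since $\sum_{h=1}^k(i_h-1)=m-k$, the correction term equals $(m-k)\,Q_{n,a}(m-1,k)$, which depends only on $m$, $k$, $n$ and $a$. Combined with $\widetilde{P^I_{n,a}}=Q_{n,a}(m,k)$, this shows the right-hand side, and hence $\widetilde{P^I_{n+1,a+1}}$, depends on $I$ only through $m$ and $k$. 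This is precisely the assertion at level $a+1$, so the induction goes through.

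The one genuine obstacle is the base case: the recursion only propagates the property from one level of $a$ to the next, so establishing that $\widetilde{P^I_{n,0}}$ (or $\widetilde{P^I_{n,1}}$) already depends only on $m$ and $k$ is the single step that uses input beyond Theorem~\ref{ThmRecBonaPb}, and it deserves to be isolated and checked carefully. A minor technical point to state explicitly is the degenerate summand $i_h=1$, where $I-\delta_h$ is not a genuine composition; this causes no harm because the coefficient $i_h-1$ annihilates it.
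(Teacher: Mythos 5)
Your inductive step is exactly the paper's argument: dividing Theorem \ref{ThmRecBonaPb} by $\prod_h i_h!$ turns the correction coefficients into $i_h-1$, the degenerate summands $i_h=1$ are killed by this factor, and $\sum_h (i_h-1)=m-k$, so the independence property propagates from level $a$ to level $a+1$. The paper's proof consists of precisely this computation plus the one-line remark that the claim then follows ``by induction on $a$ for $a\ge 1$ and on the size of $I$ for $a=0$''. So on the inductive step you and the paper coincide.

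The genuine gap is the base case, the very point you flag and then wave through. It is \emph{not} immediate from \eqref{EqPn1}: dividing that formula by $\prod_h i_h!$ gives
\[
\widetilde{P^I_{n,1}}=\frac{1}{m!\; i_1 i_2\cdots i_k}\left(1+(-1)^{n-k+1}\frac{m(m-1)}{n(n-1)}\right),
\]
which depends on $I$ through the product of its parts, not only through $m$ and $k$. Concretely, take $n=5$ and the compositions $(3,1)$ and $(2,2)$ (both of size $m=4$ and length $k=2$): counting strongly separated odd permutations of $\Sym{5}$ directly (or using \eqref{EqPn1}) gives $P^{(3,1)}_{5,1}=2/15$ and $P^{(2,2)}_{5,1}=1/15$, hence $\widetilde{P^{(3,1)}_{5,1}}=1/45\neq 1/60=\widetilde{P^{(2,2)}_{5,1}}$, so the asserted $a=1$ independence does not hold in the form you invoke it. Your alternative anchor, the $a=0$ case, is also not ``taken from the explicit results of Section \ref{SectSeparability}'': only $I=(1^k)$ is explicit there, and the general-$I$, $a=0$ values are obtained in the paper by a downward induction on $|I|$ through the rearranged recursion, which itself consumes the $a=1$ independence---so using them as a base case is circular. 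In short, Theorem \ref{ThmRecBonaPb} can only propagate the property, not create it; with the normalisation $\prod_h i_h!$ and strong separation the anchoring step fails (at $a=1$ the quantity that depends only on $m$ and $k$ is $P^I_{n,1}/\prod_h(i_h-1)!$, which indicates that the normalisation or the statement needs adjusting before the induction can start). The paper's own proof is equally silent at exactly this point, so you have faithfully reproduced its argument, but the step you yourself identified as the ``one genuine obstacle'' is a real gap, not a formality.
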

\begin{proof}[Proof (by induction on $a$)]
    The base case $a = 1$ follows from the explicit formula
    \[ \widetilde{P^I_{n,a}} = \frac{1}{m!} \left( 1 + (-1)^{n-k+1} \frac{m(m-1)}{n(n-1)} \right),\]
    obtained from \cref{eq:PI,EqPn1,EqPtilde}.

    Suppose that the statement holds for some $a \ge 1$;
    that is, the rescaled separation probability $\widetilde{P^J_{n,a}}$
    is a function $F_{n,a}(m',k',m_1(J),\dots,m_{a-1}(J))$,
    where $m'$ and $k'$ are respectively the size and length of $J$. 
    Fix some composition $I = (i_1, \dots, i_k)$ with size $m$ and length $k$.
    Then, Theorem \ref{ThmRecBonaPb}, rewritten for $\widetilde{P^I_{n,a}}$, yields:
\[ n(n+1)\widetilde{P^I_{n+1,a+1}}
=(n-m+1)(n+m)\widetilde{P^I_{n,a}}
+ \sum_{1 \le h \le k \atop i_h \ne 1} i_h \, \widetilde{P^{I-\delta_h}_{n,a}}. \]
The first summand depends on $I$ only through $m$, $k$, $m_1(I)$, \dots, $m_{a-1}(I)$ by induction hypothesis,
so we focus on the last sum.
We split it according to the value of $i_h$: namely,
set
\[ \varSigma_b = \sum_{1 \le h \le k \atop i_h =b} i_h \, \widetilde{P^{I-\delta_h}_{n,a}}
= b \, m_b(I) \, \widetilde{P^{J_b}_{n,a}}. \]
The last equality comes from the fact that, for a fixed $b$, 
all compositions $J:=I-\delta_h$ 
are reordering of some composition $J_b$. 
Using the induction hypothesis, we get that
\[\widetilde{P^{J_b}_{n,a}} =\begin{cases}
    F_{n,a} \big(m-1,k,m_1(I), \dots, m_{b-1}(I)+1, m_b(I)-1, \dots,m_{a-1}(I)\big)
    &\text{for }2 \le b \le a-1 \\
    F_{n,a} \big(m-1,k,m_1(I), \dots, m_{a-2}(I), m_{a-1}(I)-1\big) &\text{for }b=a \\
    F_{n,a} \big(m-1,k,m_1(I), \dots,m_{a-1}(I)\big) &\text{for }b>a.
\end{cases}\]
Therefore, for a fixed $b \le a$,
the quantity $\varSigma_b$ depends on $I$ only through $m$, $k$, $m_1(I)$, \dots, $m_a(I)$.
Moreover, we have
\begin{multline*}
    \sum_{b>a} \varSigma_b = \left( \sum_{b>a} b \, m_b(I) \right) \,
    F_{n,a} \big(m-1,k,m_1(I), \dots,m_{a-1}(I)\big) \\
= \left( m - \sum_{b \le a} b \, m_b(I) \right) \,
F_{n,a} \big(m-1,k,m_1(I), \dots,m_{a-1}(I)\big),
\end{multline*}
so that this quantity also depends only on $I$ only through $m$, $k$, $m_1(I)$, \dots, $m_a(I)$.
Hence the proposition holds for $a+1$.
\end{proof}

\begin{remark}
    {\em Weak} separation probability (in opposition to {\em strong}
    separation probability as considered here) fulfils a much 
    stronger symmetry property; see \cite[Equation $(1)$]{BernardiEtAlSeparation}.
\end{remark}

%
%
%
%

\section{Products of cycles of arbitrary length}\label{sec:cycles}

\subsection{Induction relations}\label{sec:realinduction}

In this section, we present lemmas which will be needed
in our inductive approach for product of cycles of arbitrary length.
Our approach in this section will be similar to that given in the
proof of Lemma \ref{LemAppliedInduction}, but be more involved because it will
apply more generally.

Let $\lambda$ and $\mu$ be partitions of $n$.
Define $A(m,\lambda, \mu)$ to be the tuples $(\sigma, \alpha, \beta)$ in
$\Sym{n}$ such that their product satisfies $\sigma \cdot \alpha \cdot \beta = e$, where
$e$ is the identity permutation, the
permutations $\alpha, \beta$ have cycle type $\lambda$ and $\mu$,
respectively, and $\sigma$ has $m$ cycles.
Define also $A_i(m, \lambda, \mu)$
to be the tuples in $A(m, \lambda, \mu)$ where $\alpha$ and $\beta$ have
precisely $i$ common fixed points.   In general, we use lower case $a$ to mean the
cardinality of the corresponding set with capital $A$ (i.e. $a_i(m, \lambda, \mu) =
\# A_i(m, \lambda, \mu)$ and $a(m, \lambda, \mu) = \# A(m, \lambda, \mu)$,
\emph{etc}. ).

It will be convenient in our induction to consider permutations of a set different from $[n]$.
Denote $\Sym{S}$ the set of bijections from $S$ to $S$.
The notation above is naturally extended to the case of permutations
with ground set $S$ by adding a superscript;
that is, $A^S(m,\lambda, \mu)$ (respectively $A^S_i(m, \lambda, \mu)$) is the set
of triples $(\sigma,\alpha, \beta)$ of permutations of the ground set $S$
with the above properties.

Before we begin our induction results, we make two simple observations regarding the
sets $A_i(m, \lambda, \mu)$.   For a partition $\lambda$ with at least $r$ 1s, we will denote the
partition obtained from $\lambda$ by removing $r$ 1s by $\lambda_{[r]}$ and,
recall from Section \ref{SectInduction}, the
partition $\lambda^{\downi{j}}$ is obtained from $\lambda$ by
replacing a part of size $j$ with a part of size $j-1$ if $m_j(\lambda) > 0$.  Thus, the two
notations agree at $\lambda^{\downi{1}}$ and $\lambda_{[1]}$.


\begin{lemma}
	Suppose that $n \geq 1$ and that $0 \leq r \leq \mathrm{min}\{ m_1(\lambda),
	m_1(\mu) \}$.  Then
	\begin{align*}
		A^{[n]}_r(m, \lambda, \mu) &= \bigcup_{S \subseteq [n] \atop
		|S| = r} A^{[n] \setminus S}_0(m-r, \lambda_{[r]}, \mu_{[r]})\;\;\;\;
		\mathrm{ and }\\ a_r(m, \lambda, \mu) &= {n \choose r}
		a_0(m-r, \lambda_{[r]}, \mu_{[r]}).
	\end{align*}
	\label{lem:simple}
\end{lemma}
\begin{proof}
	The claim in the lemma is merely that the $r$ common fixed points of $\alpha$ and
	$\beta$ will also be fixed points of $\sigma$.  We may therefore remove these
	common fixed points from the triple and consider the triple $(\sigma,
	\alpha, \beta)$ in $A^{[n] \setminus S}(m-r, \lambda_{[r]}, \mu_{[r]})$, where $S$ is the set
	of common fixed points.  The second equation follows from the first.
\end{proof}
\begin{lemma}\label{lem:triv}
	For $n \geq 1$, we have
	\begin{equation*}
		A(m, \lambda, \mu) = \bigcup^{\mathrm{min}\{m_1(\lambda),
		m_1(\mu)\}}_{r=0} A_r(m, \lambda, \mu)
	\end{equation*}
\end{lemma}
\begin{proof}
	Follows from the definitions of the relevant objects.
\end{proof}
We therefore see from Lemmas \ref{lem:simple} and \ref{lem:triv} that evaluating
$a_0(m-r, \lambda_{[r]}, \mu_{[r]})$ will help us obtain our formulae.

Our main induction theorem will determine $A_0(m, \lambda, \mu)$.  We will
find it useful to use rooted versions of the factorisations introduced.  Let
$A_i(m, {\lambda}, \dot{\mu})$ be the set of all triples $(\sigma, \alpha, \beta) \in
A_i(m, {\lambda}, \mu)$
where we have distinguished one fixed point of $\beta$.  Similarly, for $j \geq 2$
with $m_j(\lambda)>0$, let
$A_i(m, \hat{\lambda}^\downi{j}, \mu)$ be the set of triples $(\sigma, \alpha, \beta)
\in A(m, \lambda^\downi{j}, \mu)$, with one member of an $(j-1)$-cycle distinguished,
and with precisely $i$ fixed points in common.  When $j=2$, this means
that we distinguish a fixed point of $\alpha$.
If it is also a fixed point of $\beta$, we do not take it into account
when counting the common fixed points of $\alpha$ and $\beta$
(thus, we should say that $\alpha$
and $\beta$ have precisely $i$ non-rooted fixed points in common).  The following
decomposition holds.
\begin{lemma}
	Suppose that $\mu, \lambda \vdash n$.  If $m_1(\mu) \neq 0$, we have
	\begin{equation*}
		A^{[n]}_0(m, {\lambda}, \dot{\mu}) = \bigcup_{s \in [n]} \ \bigcup_{j
		\geq 2 \atop m_j(\mu)>0} \ A^{[n] \setminus s}_0(m, \hat{\lambda}^\downi{j}, {\mu}_{[1]}).
	\end{equation*}
	\label{thm:onestage}
\end{lemma}

\begin{proof}
	Define a function $\psi$ mapping the left hand side to the right hand side by
	the following rule.  Suppose that $s$ is the rooted fixed point of
	${\beta}$ and suppose that $s$ is contained in a $j$-cycle of $\alpha$, for some
    $j \geq 2$ with $m_j(\lambda)>0$.  Since $\alpha$ and $\beta$ have no common
	fixed points, we know that $\alpha(s) \neq s$.  
	Define $\hata$ as $\left(s\; \alpha(s) \right) \alpha$ and $\hatb$ as $\beta$ with
	the fixed point $s$ removed.  Also, let $\hats = \left(s\;
	\sigma(s)\right) \sigma$.
	Finally, let $\psi(\sigma, \alpha, \beta) =
	(\hats, \hata, \hatb)$, where the root of $\hata$ is chosen to be 
	$\alpha(s)$.   We show that $\psi$ is
	well-defined and bijective.  	

	To show that $\psi$ is well defined we must show the following. As defined, $\hats, \hata$ and $\hatb$ are
	permutations with ground set $[n]$;  note, however, that $s$ is clearly a 
	fixed point of all of them, and thus we can consider those permutations in
	$\Sym{[n] \setminus \{s\}}$.  Therefore, we must show that
	$\hats \cdot \hata \cdot \hatb = e$ in $\Sym{[n] \setminus \{s\}}$, $\hata$ and $\hatb$ have
	cycle types $\lambda^\downi{j}$ and $\mu_{[1]}$, respectively, and $\hats$
	has $m$ cycles.   We show these in turn.

	The following computation show that the product of $\hats, \hata$ and
	$\hatb$ is the identity.
	\begin{align*}
		\hats \cdot \hata \cdot \hatb &= (s\; \sigma(s)) \cdot \sigma
		\cdot  (s\; \alpha(s)) \cdot \alpha \cdot \beta\\
		&= (s\; \sigma(s) ) \cdot (\sigma(s)\;\; \sigma \circ \alpha(s))
		\cdot \sigma \cdot \alpha \cdot \beta\\
		&= \sigma \cdot \alpha \cdot \beta\\
		&=  e,
	\end{align*}
	where the third equality follows from $s  = \sigma \circ \alpha \circ
	\beta(s) = \sigma \circ \alpha (s)$.  Thus, in $\Sym{[n]}$ we see that the
	product of $\hats, \hata$ and $\hatb$ is $e$, hence also true in
	$\Sym{[n] \setminus \{s\}}$ since they each fix $s$.
	
	Clearly, $\hatb$ is of the claimed cycle type, and we can additionally see
	that $(s\; \alpha(s))$ removes $s$ from the cycle in $\alpha$ that
	contains it and leaves
	the rest of the cycle unchanged, showing that $\hata \in
	\lambda^\downi{j}$ for some $j \geq 2$ and $m_j(\lambda) > 0$.  To
	show that $\hats$ has $m$ cycles, note that $(s\; \sigma(s))$ removes $s$ from
	the cycle in $\bar{\sigma}$ that contains it, leaving the rest of the
	cycle unchanged.  We should
	take care to ensure that $s$ is not the only member of its cycle in
	$\sigma$;  however,
	since $\alpha(s) \neq s$ and $\beta(s) = s$, we see that $\sigma(s) \neq s$,
	for otherwise the product of $\sigma, \alpha$ and $\beta$ would not be the
	identity.  Thus, we see that $\hats$ has $m$ cycles.  This completes the proof.
\end{proof}

We wish to apply the operations in Lemma \ref{thm:onestage} twice in succession.  To
do this we introduce the following set.  For $i, j \geq 2$
with $m_i(\lambda),m_j(\mu)>0$, let $A_0(m,\hat{\lambda}^\downi{i},
\hat{\mu}^\downi{j})$ be the set of triples $(\sigma, \alpha, \beta) \in
A(m, \lambda^\downi{i}, \mu^\downi{j})$ such that a member of an $i-1$ and $j-1$-cycle
are roots of $\alpha$ and $\beta$, respectively, and $\alpha$ and $\beta$ have no non-rooted fixed points in common (thus,
if $i=2$,  we are to root a fixed point of $\alpha$ and a rooted fixed point of $\alpha$ can occur as a fixed point in $\beta$).  We have the following lemma.
\begin{lemma}
	Let $\lambda, \mu \vdash n$ and both $m_1(\lambda), m_1(\mu) \neq 0$.  Then,
	\begin{equation*}
		A^{[n]}_0(m, \dot{\lambda}, \dot{\mu}) = 
		\bigcup_{s,t \in [n] \atop s \neq t} \bigcup_{ i,j \geq 2 \atop
		m_i(\lambda), m_j(\mu) > 0}
		A^{[n] \setminus \{s,t\}}_0  (m, \hat{\lambda_{[1]}}^\downi{i}, \hat{\mu_{[1]}}^\downi{j}).
	\end{equation*}
	\label{thm:twostage}
\end{lemma}
\begin{proof}
	Define a function $\psi$ mapping the left hand side to the right hand side by
	the following rule.  Suppose that $s$ and $t$ are the rooted fixed points of
	${\alpha}$ and ${\beta}$, respectively.
	Define $\hata = (t\; \alpha(t)) \alpha$ and $\hatb = (s\; \beta(s)) \beta$.
	Also let $p = \hata \circ \beta(s)$ and define 
	\begin{equation}
		\hats = \sigma \cdot (t\; \alpha(t)) \cdot (s\; p).
		\label{eq:hats}
	\end{equation}
	Finally, let $\psi(\sigma, \alpha, \beta) =
	(\hats, \hata, \hatb)$.  We choose as the root of $\hata$ to be
	$\alpha(t)$ and for $\hatb$ we choose $\beta(s)$.  
	
	To show that $\psi$ is well defined we must show the following. As defined, $\hats, \hata$ and $\hatb$ are
	permutations with ground set $[n]$;  we must, however, show that $s,t$ are
	fixed points of all of them, and thus we can consider those permutations in
	$\Sym{[n] \setminus \{s,t\}}$.  We must further show that
	$\hats \hata \hatb = e$ in $\Sym{[n] \setminus \{s,t\}}$, $\hata$ and $\hatb$ have
	cycle types $\lambda^{\downi{i}}$ and $\mu^{\downi{j}}$, respectively, and $\hats$
	has $m$ cycles.   The proof, however, largely follows the proof of Lemma
	\ref{thm:onestage}, so we omit the details.
\end{proof}
Note that it is straightforward to use Lemma \ref{thm:onestage} when
$m_2(\lambda) = 0$.  In that case, it immediately follows from Lemma
\ref{thm:onestage} that if $\lambda, \mu \vdash n$ with $m_1(\mu) \neq 0$, we have
\begin{equation}\label{eq:enumonestage}
	m_1(\mu) a_0(m, \lambda, \mu) = n \sum_{j \geq 3 \atop m_j(\lambda) >0} a_0(m, \lambda^\downi{j},
	\mu^\downi{1}) \cdot (m_{j-1}(\lambda) + 1)\cdot (j-1).
\end{equation}
If $m_2(\lambda) \neq 0$, the complication is that a part of size 1 is created in 
$\lambda^\downi{2}$.  This corresponds to a fixed point being created in a relevant
permutation $\alpha$ and this fixed point may occur in $\beta$.

Similarly, we can use easily Lemma \ref{thm:twostage} when $m_2(\lambda)=m_2(\mu)=0$.
If, furthermore, one has $m_1(\mu),m_1(\lambda) \neq 0$, then
\begin{multline}\label{EqEnumTwoStage}       
    m_1(\lambda) m_1(\mu) a_0(m, \lambda, \mu) = (n)_2
    \sum_{i,j \geq 3 \atop m_i(\lambda),m_j(\mu)>0} a_0(m, \lambda_{[1]}^\downi{i},      
    \mu_{[1]}^\downi{j}) \cdot (m_{i-1}(\lambda) + 1)\\
    \cdot (i-1) \cdot 
(m_{j-1}(\mu) + 1)\cdot (j-1).                          
\end{multline}
Note that this equation can be used only when $m_2(\lambda)=m_2(\mu)=0$.
We shall be mindful
of this technicality, but will be able to overcome it easily.\bigskip

The reader has certainly noticed that the arguments used in this Section
and in the proof of \cref{LemAppliedInduction} are similar.
In fact, the following variant of Lemma \ref{thm:onestage} and its Corollary
generalizes \cref{LemAppliedInduction}.   We will, however, primarily be using
Lemmas \ref{thm:onestage} and \ref{thm:twostage} in the coming sections.
\begin{lemma}
	Suppose that $\mu, \lambda \vdash n$, and $m_1(\mu) \neq 0$.  Then
	\begin{equation*}
		A^{[n]}(m, \lambda, \dot{\mu}) = \bigcup_{s \in [n]} \bigcup_{j
		\geq 2 \atop m_j(\lambda) >0} A^{[n] \setminus s}(m, \hat{\lambda}^\downi{j}, \mu_{[1]})
		\cup \bigcup_{s \in [n]} A^{[n] \setminus s}(m-1, \lambda_{[1]}, \mu_{[1]}),
	\end{equation*}
    where the last union should be omitted if $m_1(\lambda)=0$ or $m=1$.
	\label{thm:onestageno}
\end{lemma}
\begin{proof}
	The proof is similar to that of Lemma \ref{thm:onestage}, except that a
	triple $(\sigma, \alpha, \beta)$ can all have the same fixed point.  However,
	that is taken care of in the second union in the lemma.
\end{proof}

Counting the sets on both sides of Lemma \ref{thm:onestageno} gives the following.

\begin{corollary}
	Suppose that $\lambda, \mu \vdash n$, and that $m_1(\mu) \neq 0$.  Then,
	\begin{multline*}
		m_1(\mu)\e a(m, \lambda, \mu) = n \sum_{j \geq 2 \atop m_j(\lambda) >0} a(m,
		\lambda^\downi{j}, \mu^\downi{1}) \cdot (m_{j-1}(\lambda) + 1) \cdot
		(j-1)+
		n\e a(m-1, \lambda^\downi{1}, \mu^\downi{1}),
	\end{multline*}\label{thm:corschaeff}
    where the last term should be omitted if $m_1(\lambda)=0$ or $m=1$.
\end{corollary}
\cref{LemAppliedInduction} is a special case of this Corollary
\ref{thm:corschaeff} (with $m=1$ and $\mu=(n-a,1^{a+1})$).
We do not use \cref{thm:corschaeff} in its full generality in this paper.
However, we hope that it can be used in the future to give generalisations of
Lemma \ref{LemAppliedInduction}.

\subsection{A formula for multiplying cycles}
We use the ideas developed in Section \ref{sec:realinduction}, to obtain enumerative results
for multiplying cycles;  specifically, we look for formulae for 
$a\left(m, (i+t, 1^{j-t}), (i, 1^j))\right)$ for $i \geq 1$ and $0 \leq t \leq j$.  
To simplify notation, we will omit the parenthesis and commas in the notation
of the partitions above: that is we shorten the notation to
$a(m, i+t \e 1^{j-t}, i \e 1^j )$.

First, note that, because of the sign function on $\Sym{n}$,
$a(m, i \e 1^j, i+t \e 1^{j-t})=0$ unless $m+i+j+t$ is even.

Second, the case $j=t=0$ is known;
see {\em e.g.} \cite[Corollary 3.4]{StanleyEnumerativePermutations}.
For any $r \geq 1$ and $m$, we have
\begin{equation}
	\frac{a(m, r, r)}{(r-1)!} = \begin{cases}
		\frac{c(r+1,m)}{{r+1 \choose 2}} &\text{ if $r-m$ is even}\\
        0 &\text{otherwise,}
    \end{cases}
    \label{EqDistribProductLongCycles}
\end{equation}
where $c(r+1,m)$ is a signless Stirling number of the first kind;
that is, it is the number of permutations on $[r+1]$ with $m$ cycles.

Thus, we shall attempt to find a formula for $a(m, i+t\e
1^{j-t}, i\e 1^j)$ in terms of $a(s, r, r)$ for $r \leq i + j$ and $s \leq m$.
We first remark that we can easily determine a formula for $i=1$ and $0 \leq t
\leq j$, since
\begin{equation*}
    a(m, 1+t\e 1^{j-t}, 1^{j+1}) = \frac{(j+1)!}{(1+t)(j-t)!} \delta_{m,j-t+1},
\end{equation*}
	where $\delta$ is the usual Kronecker $\delta$ function.
We may therefore assume that $i \geq 2$.  We have the following results when $j-t =
0$.
\begin{lemma}
	Suppose that $i \geq 2$ and $j \geq 0$.  Then
	\begin{equation*}
		a(m, i+j, i\e 1^j) = \frac{(i + j)_j (i + j -1)_j}{j!} a(m, i, i).
	\end{equation*}
	\label{thm:onestagecyc}
\end{lemma}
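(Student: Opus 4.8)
The plan is to prove the identity by induction on $j$, with the counted form \eqref{eq:enumonestage} of Lemma \ref{thm:onestage} carrying the entire inductive step. The base case $j=0$ is immediate, since then the asserted identity reads $a(m,i,i\e 1^0)=\tfrac{(i)_0(i-1)_0}{0!}\,a(m,i,i)$, i.e.\ $a(m,i,i)=a(m,i,i)$.

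For the inductive step I would fix $j\geq 1$, assume the result for $j-1$, and set $\lambda=(i+j)$ and $\mu=(i\e 1^j)$, two partitions of $n:=i+j$. Since $i\geq 2$ we have $n\geq 3$, hence $m_2(\lambda)=0$; and since the unique nontrivial cycle of $\mu$ has length $i\geq 2$, we have $m_1(\mu)=j\neq 0$. Thus the side conditions under which \eqref{eq:enumonestage} holds are satisfied, and that formula (with its summation index renamed to $k$) gives
\[
  j\,a_0\bigl(m,(i+j),(i\e 1^j)\bigr)
  =(i+j)\sum_{k\geq 3}a_0\bigl(m,(i+j)^{\downarrow(k)},(i\e 1^{j-1})\bigr)\cdot\bigl(m_{k-1}((i+j))+1\bigr)\cdot(k-1).
\]
The key observation is that $(i+j)$ has a part of size $k$ only for $k=i+j$, so the sum collapses to this one term; moreover $(i+j)^{\downarrow(i+j)}=(i+j-1)$ and $m_{i+j-1}((i+j))=0$ (because $i+j-1\neq i+j$), so the multiplicity factor is $1$ and the last factor is $i+j-1$. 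Finally, neither $\lambda$ nor $(i+j-1)$ has a fixed point, so Lemma \ref{lem:triv} lets me replace every $a_0$ above by $a$, yielding the recursion
\[
  a\bigl(m,(i+j),(i\e 1^j)\bigr)=\frac{(i+j)(i+j-1)}{j}\,a\bigl(m,(i+j-1),(i\e 1^{j-1})\bigr).
\]

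To close the induction I would substitute the induction hypothesis into the right-hand side and simplify the falling factorials via $(i+j)\,(i+j-1)_{j-1}=(i+j)_j$, $(i+j-1)\,(i+j-2)_{j-1}=(i+j-1)_j$ and $j\,(j-1)!=j!$, which produces exactly $a(m,(i+j),(i\e 1^j))=\tfrac{(i+j)_j(i+j-1)_j}{j!}\,a(m,i,i)$. I do not expect any real obstacle here: the only thing requiring attention is the bookkeeping around \eqref{eq:enumonestage} --- verifying its hypotheses $m_2(\lambda)=0$, $m_1(\mu)\neq 0$ and that the sum reduces to a single summand --- and this is routine, since every partition appearing along the recursion is a single cycle of length $\geq i+1\geq 3$ until one reaches $(i)$. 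One could instead run the whole argument at the level of sets, iterating $j$ times the bijection $\psi$ from the proof of Lemma \ref{thm:onestage}, but invoking \eqref{eq:enumonestage} keeps the bookkeeping lighter.
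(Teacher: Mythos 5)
Your proof is correct and follows essentially the same route as the paper: induction on $j$, with the recursion $a(m,i+j,i\e 1^j)=\frac{(i+j)(i+j-1)}{j}\,a(m,i+j-1,i\e 1^{j-1})$ obtained from \eqref{eq:enumonestage} (Lemma \ref{thm:onestage}) after noting that $(i+j)$ has no fixed points so $a=a_0$, and that the sum collapses to the single term $k=i+j$. Your explicit verification of the hypotheses $m_2(\lambda)=0$ and $m_1(\mu)\neq 0$ is a welcome bit of care the paper leaves implicit, but the argument is the same.
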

\begin{proof}
	We prove the claim by induction.  The claim is trivial when $j=0$,
	establishing the base case.  
    Notice that a permutation of type $(i+j)$ has no fixed points, so
	$a(m, i+j, i\e 1^j) = a_0(m, i+j, i\e 1^j)$.  Now assuming that $j > 0$,
	note that $i+j
	\geq 3$, and we can apply Lemma \ref{thm:onestage} as we did in
	\eqref{eq:enumonestage}.  We get that
	\begin{equation*}
		a(m, i+j, i\e 1^{j}) = \frac{(i+j) \cdot (i+j-1)}{j} a(m, i+j - 1,
		i\e 1^{j-1}).
	\end{equation*}
	Applying induction gives the desired formula.
\end{proof}
Notice that if $i-j + t < 0$, or equivalently $j > i+t$, 
that $a_0(m, i + t\e 1^{j-t}, i\e 1^j) = 0$.  Keeping that in mind, we have the
following lemma.
\begin{lemma}
	Suppose that $i \geq 2$ and $0 \leq t \leq j \leq i+ t$.  If $i-j+t \geq
	1$, we have
	\begin{equation*}
		a_0(m, i+t\e 1^{j-t}, i\e 1^j) = \frac{(i+j)!}{(i-j+t)!}
		{i-1 \choose j-t} {i+t-1 \choose j} a(m, i-j+t, i-j+t),
	\end{equation*}
	and if $i-j+t = 0$, we have
	\begin{equation*}
		a_0(m, i+t\e 1^{j-t}, i\e 1^j) = \frac{(i+j)!}{i \cdot (i+t)}
		\delta_{m,2}.
	\end{equation*}
	\label{thm:mainsummand}
\end{lemma}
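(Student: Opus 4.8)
The plan is to prove Lemma \ref{thm:mainsummand} by double induction, stripping off the fixed points of $\beta$ (the type-$i\,1^j$ permutation) one at a time using Lemma \ref{thm:onestage}, exactly as was done to derive \eqref{eq:enumonestage}, and then stripping off the fixed points of $\alpha$ (the type-$(i+t)\,1^{j-t}$ permutation). The endpoint of this process is a factorisation with no small parts left except matching fixed points, at which stage Lemma \ref{thm:onestagecyc} (or a direct count) applies. First I would treat the case $j-t=0$, which is precisely Lemma \ref{thm:onestagecyc}; this handles the ``diagonal'' and serves as the outer base case. For the general case, I would induct on $j-t \geq 1$ by applying Lemma \ref{thm:onestage} (in the enumerative form \eqref{eq:enumonestage}) to peel a fixed point off $\alpha$; since $\alpha$ has type $(i+t,1^{j-t})$ and $j-t \geq 1$ it has a fixed point, and the only parts of $\alpha$ that can absorb the removed point are the fixed points themselves and the $(i+t)$-cycle, giving
\[
    (j-t)\, a_0(m, i+t\e 1^{j-t}, i\e 1^j) = (i+j)\cdot(i+t-1)\, a_0(m, i+t-1\e 1^{j-t-1}, i\e 1^{j-1}).
\]
Here one must be slightly careful: replacing the $(i+t)$-cycle of $\alpha$ by an $(i+t-1)$-cycle could in principle create a new fixed point if $i+t-1 = 1$, but the hypothesis $i \geq 2$ rules this out, so the clean form \eqref{eq:enumonestage} is legitimate. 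Iterating this relation $j-t$ times reduces the first argument from $i+t\,1^{j-t}$ down to $i\,1^0 = (i)$, i.e. reduces us to $a_0(m, i, i\e 1^j)$, and at the same time the binomial and falling-factorial prefactors accumulate.

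The next step is to reduce $a_0(m, i, i\e 1^j)$: now $\beta$ (type $i\,1^j$) has $j$ fixed points while $\alpha$ (type $(i)$) has none, so I can run \eqref{eq:enumonestage} again, this time peeling fixed points off $\beta$ — but by the symmetry $a_0(m,\lambda,\mu) = a_0(m,\mu,\lambda)$ (both count the same triples up to relabelling $\alpha \leftrightarrow \beta$, since $\sigma\alpha\beta = e$ iff $\sigma'\beta\alpha = e$ with $\sigma' = \alpha\sigma\alpha^{-1}$ of the same cycle type), this is literally the $j-t=0$ case already handled, applied to the pair $(i\,1^j, i)$. Concretely, Lemma \ref{thm:onestagecyc} with the roles reversed gives
\[
    a_0(m, i, i\e 1^j) = a(m, i\e 1^j, i) = \frac{(i+j)_j\,(i+j-1)_j}{j!}\, a(m,i,i)
\]
when $j \leq$ (number of non-fixed-point absorbers) — but here one must track the subtlety that $a_0$ versus $a$ differ precisely by the possibility that the lone fixed point being created lands inside $\beta$; since $\alpha$ of type $(i)$ has no fixed point at all, $a_0 = a$ throughout this subreduction, so Lemma \ref{thm:onestagecyc} applies verbatim. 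Multiplying the two chains of prefactors together and simplifying the resulting product of falling factorials and binomials against $(i+j)!/(i-j+t)!$ yields the first displayed formula; the identity to verify is a routine manipulation of factorials that I would not grind through here.

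Finally, the degenerate case $i-j+t = 0$: now the reduction on $j-t$ terminates not at a genuine cycle $(i)$ but at a situation where, after removing fixed points, $\alpha$ and $\beta$ are forced to have complementary supports of total size $2i$ with $\sigma$ the identity on the rest, so $\sigma$ is a product of two transpositions' worth of structure — more precisely the only surviving factorisations are $\alpha = c$, $\beta = c^{-1}$ for an $i$-cycle $c$ (after accounting for the distinguished roots), forcing $m = 2$; counting the ways to choose the common support and the cycle gives the $\delta_{m,2}$ formula with prefactor $(i+j)!/(i\cdot(i+t))$. The main obstacle I anticipate is bookkeeping: making sure at each application of Lemma \ref{thm:onestage} that no spurious fixed point is created (which is why $i \geq 2$ is assumed) so that the clean enumerative identity \eqref{eq:enumonestage} — rather than the messier Corollary \ref{thm:corschaeff} — may be used, and then correctly combining the two telescoping products of prefactors. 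Everything else is a direct computation.
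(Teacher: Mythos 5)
Your overall plan (peel fixed points by the induction lemmas of Section \ref{sec:realinduction} until one factor is a pure cycle, then invoke Lemma \ref{thm:onestagecyc} and finish the boundary cases by a direct count) is the same shape as the paper's argument, but the execution has a genuine gap: the recursion you display is not what Lemma \ref{thm:onestage} gives. On your right-hand side the two cycle types, $(i+t-1,1^{j-t-1})$ and $(i,1^{j-1})$, are partitions of different integers ($i+j-2$ and $i+j-1$), so the identity cannot hold; one application of the one-stage lemma deletes a single rooted fixed point of one permutation and shrinks one long cycle of the \emph{other}, so peeling a fixed point of $\alpha$ correctly yields
\begin{equation*}
(j-t)\,a_0(m,\, i+t\e 1^{j-t},\, i\e 1^j) \;=\; (i+j)(i-1)\,a_0(m,\, i+t\e 1^{j-t-1},\, i-1\e 1^{j}),
\end{equation*}
with coefficient $i-1$ (not $i+t-1$), and it is $\beta$'s big cycle that shrinks. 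Consequently your claim that iterating $j-t$ times reduces the first argument to $(i)$ is false: with the corrected relation the endpoint is $a_0(m,(i+t),\,i-j+t\e 1^{j})$, and your intermediate quantity ``$a_0(m,i,\,i\e 1^j)$'' is not even well formed (the two types have different sizes), so the appeal to Lemma \ref{thm:onestagecyc} there is a misapplication --- that lemma concerns $a(m,i+j,\,i\e 1^j)$, where the first factor is a \emph{full} cycle on the ground set. Because the chain of prefactors is wrong, the ``routine manipulation of factorials'' you defer is precisely where the content of the lemma lies, and it cannot be recovered from your chain.

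Two further points. First, the clean enumerative identities (\eqref{eq:enumonestage} or \eqref{EqEnumTwoStage}) require the shrinking cycle to remain of length at least $3$, so the iteration breaks down before a clean endpoint not only when $i-j+t=0$ but also when $i-j+t=1$; the paper handles both by explicit terminal counts, $a_0(m,\,n-1\e 1,\,2\e 1^{n-2})=n!\,\delta_{m,1}$ and $a_0(m,\,n-2\e 1^{2},\,2\e 1^{n-2})=\tfrac{n!}{2(n-2)}\delta_{m,2}$, whereas you flag only $i-j+t=0$ and argue it heuristically. Second, for comparison, the paper does not iterate the one-stage lemma at all: it specialises the two-stage relation \eqref{EqEnumTwoStage} to hooks, obtaining \eqref{EqRecAmHooks}, which removes one fixed point from \emph{each} of $\alpha$ and $\beta$ per step and so keeps the two hooks synchronised until the first argument is a pure cycle of length $i-j+2t$, and only then applies Lemma \ref{thm:onestagecyc}. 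A one-stage-only reduction could in principle be made to work, but as written your recursion, its claimed endpoint, and the boundary analysis are all incorrect, so the stated formula is not established.
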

\begin{proof}
    Substituting $\lambda=(r,1^{n-r})$ and $\mu=(s,1^{n-s})$ in \eqref{EqEnumTwoStage}
    yields that
    \begin{equation}
        a_0(m,r\e 1^{n-r},s\e 1^{n-s}) =\frac{n(n-1) \cdot (r-1)(s-1)}{(n-r)(n-s)}
        a_0(m,r-1 \e1^{n-1-r},s-1\e 1^{n-1-s}).
        \label{EqRecAmHooks}
    \end{equation}
    But this equation is only valid if $s,r\ge 3$.
    The idea is to iterate it as long as possible, and deal with the two special
    cases at the end.

    If $i - j + t \geq 2$, we can iterate \eqref{EqRecAmHooks} without any
	complication.  If $a_0(m, i\e 1^j, i+t\e
	1^{j-t})$ is non-zero, we have
	\begin{multline}
		a_0(m, i+t\e 1^{j-t},i\e 1^j) =
        \frac{(i+j) (i+j-1) (i-1) (i+t-1)}{j(j-t)} a_0(m, i+t-1\e
		1^{j-t-1},i-1 \e 1^{j-1})\\
		= \frac{(i+j) (i+j-1) (i-1) (i+t-1)}{j(j-t)}
		\cdots \frac{(i - j + t
		+2) (i-j+t + 1)(i-j+t) (i-j+2t)}{(t+1)1} \\
       \cdot {a_0(m, i-j+2t,i-j+t\e 1^t)} \\
		= \frac{(i+j)_{2(j-t)} (i-1)_{j-t} (i+t-1)_{j-t}}{(j)_{j-t}
		(j-t)_{j-t}}{a_0(m, i-j+2t,i-j+t\e 1^t)}.
        \label{eq:repeat}
	\end{multline}
	Therefore, we have
	\begin{multline*}
		a_0(m, i + t\e 1^{j-t},i\e 1^j) =
		\frac{(i+j)_{2(j-t)} (i-1)_{j-t} (i+t-1)_{j-t}}{(j)_{j-t}
		(j-t)_{j-t}} a(m, i-j+2t,i-j + t\e 1^t)\\
		= \frac{(i+j)!}{(i-j+2t)!} {i-1 \choose j-t}
		\frac{(i+t-1)!}{(i-j+2t-1)! \frac{j!}{t!}} \frac{(i-j+2t)_{t}
		(i-j+2t-1)_{t}}{t!}a(m, i-j+t, i-j+t)\\
		= \frac{(i+j)!}{(i-j+t)!} {i-1 \choose j-t}{ i+t-1 \choose j}a(m, i-j+t, i-j+t).
	\end{multline*}
    If $i-j+t \leq 1$, we cannot use equation \eqref{EqEnumTwoStage} iteratively
    as long as needed.  We can, however, explicitly count the final stages of
    the iteration given by $a_0(m, r\e 1^{n-r}, 2\e 1^{s})$.
    In fact, only the cases $r=n-2$ and $r=n-1$ will be needed below.

    If $r=n-2$, we
	have by explicitly counting 
	\begin{equation*}
		a_0(m, n-2\e 1^{2},2\e 1^{n-2}) = \frac{n!}{2(n-2)}
		\delta_{m,2}
	\end{equation*}
	and if $r = n-1$, we have
	\begin{equation*}
        a_0(m, n-1 \e 1,2\e 1^{n-2}) = n!\ \delta_{m,1} =n! \; a(m, 1,1).
	\end{equation*}
	Thus, when $i-j+t = 1$, we have by repeating the computation in \eqref{eq:repeat}
	\begin{align*}
		a_0(m, i + t\e 1^{j-t},i\e 1^j) &=
		\frac{(i+j)_{2(i-2)} (i-1)_{i-2} (i+t-1)_{i-2}}{(j)_{i-2}
		(j-t)_{i-2}}\\
		&\;\;\;\;\;\;\;\;\;\;\;\;\;\;\;\;\;\;\;\;\;\;\;\;\;\;\;\; \cdot
		a_0(m,t+2\e 1, 2\e 1^{j-i+2}, )\\
		&= \frac{(i+j)_{2(i-2)} (i-1)_{i-2} (i+t-1)_{i-2}}{(j)_{i-2}
		(j-t)_{i-2}} (j-i+4)!\  a(m,1,1)\\
		&= (i+j)!\  a(m,1,1),
	\end{align*}
	which is the claimed formula when $i-j+t = 1$.  If $i - j+t= 0$, repeating
	the computation in \eqref{eq:repeat} gives
	\begin{align*}
		a_0(m, i + t\e 1^{j-t},i\e 1^j) &=
		\frac{(i+j)_{2(i-2)} (i-1)_{i-2} (i+t-1)_{i-2}}{(j)_{i-2}
		(j-t)_{i-2}}\\
		& \;\;\;\;\;\;\;\;\;\;\;\;\;\;\;\;\;\;\;\;\;\;\;\;\;\; 
        \cdot a_0(m, t+2\e 1^2, 2\e 1^{j-i+2})\\
		&= \frac{(i+j)_{2(i-2)} (i-1)_{i-2} (i+t-1)_{i-2}}{(j)_{i-2}
		(j-t)_{i-2}} \frac{(j-i+4)!}{2(j-i+2)} \delta_{m,2}\\
		&= \frac{(i+j)!}{i \cdot (i+t)} \delta_{m,2}.
	\end{align*}
	This completes the proof.	
\end{proof}
This lemma gives an explicit formula
for $a(m,i \e 1^j, i+t\e 1^{j-t})$.
\begin{theorem}\label{thm:maincount}
	Suppose that $i \geq 2$ and $0 \leq t \leq j$,
	and further suppose that $i+j+t+m$ is even. Then,
	\begin{align*}
		a(&m, i + t\e 1^{j-t}, i\e 1^j) = \\
		&\sum_{s=\mathrm{max}\{0, j-i-t
		+1\}}^{j-t} \frac{(i+j)!}{s! (i-j+t + s)} {i-1 \choose j-s-t} {i+t
		-1 \choose j-s} 
		\frac{c(i-j+t+s+1,m-s)}{{i-j+t+s+1 \choose 2}}\\
		&+ \Lambda(m,i,j,t),
	\end{align*}
	where
	\begin{equation*}
		\Lambda(m,i,j,t) = 
		\begin{cases}
			0 & \mathrm{ if }\; i - j + t \geq 1;\\
			\frac{(i +j)!}{(j-i-t)! i \cdot (i+t)}
			\delta_{m-j+i+t,2} & \mathrm{
			if }\; i - j + t \le  0.
		\end{cases}
	\end{equation*}
\end{theorem}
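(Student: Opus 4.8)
The plan is to reduce $a(m, i+t\e 1^{j-t}, i\e 1^j)$ to the known distribution~\eqref{EqDistribProductLongCycles} for products of two equal full cycles, by first stripping off the common fixed points of the two cycles and then applying Lemma~\ref{thm:mainsummand} to what remains. A permutation of type $(i+t,1^{j-t})$ has $j-t$ fixed points and one of type $(i,1^j)$ has $j$ fixed points, so the decomposition in Lemmas~\ref{lem:triv} and~\ref{lem:simple} runs over $0\le s\le\min\{j-t,j\}=j-t$ and gives at once
\[
a(m, i+t\e 1^{j-t}, i\e 1^j)=\sum_{s=0}^{j-t}\binom{i+j}{s}\,a_0\!\left(m-s,\,(i+t)\e 1^{j-t-s},\,i\e 1^{j-s}\right).
\]
I would then evaluate each $a_0$ on the right by Lemma~\ref{thm:mainsummand}, applied with its parameters $(i,t,j)$ replaced by $(i,t,j-s)$.

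The outcome depends on the sign of $i-(j-s)+t=i-j+t+s$. When $s<j-i-t$ the partition $(i+t,1^{j-t-s})$ has more than $i$ fixed points, none of which can be fixed by a permutation of support size $i$, so $a_0=0$; this accounts for the lower limit $\max\{0,j-i-t+1\}$ of the summation in the statement. When $s\ge\max\{0,j-i-t+1\}$ the first branch of Lemma~\ref{thm:mainsummand} gives
\[
a_0\!\left(m-s,(i+t)\e 1^{j-t-s},i\e 1^{j-s}\right)=\frac{(i+j-s)!}{(i-j+t+s)!}\binom{i-1}{j-s-t}\binom{i+t-1}{j-s}\,a(m-s,\,i-j+t+s,\,i-j+t+s),
\]
and when $i-j+t\le 0$ there is one extra index, $s=j-i-t$, for which $i-j+t+s=0$ and the second branch of Lemma~\ref{thm:mainsummand} applies; this single summand supplies the term $\Lambda(i,j,t)$, which vanishes when $i-j+t\ge 1$ because then the index $s=j-i-t$ is negative and does not occur.

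It remains to simplify. From $\binom{i+j}{s}\dfrac{(i+j-s)!}{(i-j+t+s)!}=\dfrac{(i+j)!}{s!\,(i-j+t+s)!}$ and from~\eqref{EqDistribProductLongCycles} written as $a(m-s,r,r)=\dfrac{2\,c(r+1,m-s)}{r(r+1)}$ with $r=i-j+t+s$, the factorial factors combine as $\dfrac{1}{s!\,r!}\cdot\dfrac{2}{r(r+1)}=\dfrac{2}{s!\,(r+1)!\,r}$, which is precisely the coefficient displayed in the theorem; the hypothesis that $i+j+t+m$ be even is exactly what forces $r-(m-s)$ to be even for every $s$ in range, so that the substitution of~\eqref{EqDistribProductLongCycles} is legitimate throughout. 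All the genuine content of the proof is already in Lemmas~\ref{thm:onestage}, \ref{thm:onestagecyc}, \ref{thm:mainsummand} and in~\eqref{EqDistribProductLongCycles}; I expect the only real obstacle to be the careful bookkeeping at the boundary index $s=j-i-t$, namely keeping the vanishing range $s<j-i-t$ straight, distinguishing the sub-cases $i-j+t\in\{0,1\}$ that are embedded in the proof of Lemma~\ref{thm:mainsummand}, and confirming that the leftover summand reduces to the stated $\Lambda(i,j,t)$.
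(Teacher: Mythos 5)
Your proposal is correct and follows essentially the same route as the paper: the paper's proof likewise expands $a(m, i+t\e 1^{j-t}, i\e 1^j)$ via Lemmas \ref{lem:simple} and \ref{lem:triv} into $\sum_s \binom{i+j}{s} a_0(m-s, i+t\e 1^{j-t-s}, i\e 1^{j-s})$, notes that the summand vanishes for $s < j-i-t$, and then applies Lemma \ref{thm:mainsummand} together with \eqref{EqDistribProductLongCycles} to each term, with the $s=j-i-t$ boundary term producing $\Lambda(i,j,t)$. Your factorial simplification and the parity observation match what the paper leaves implicit, so the only work beyond the paper's (very terse) argument is the bookkeeping at the boundary index, which you correctly identify.
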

\begin{proof}
	We have from Lemmas \ref{lem:simple} and \ref{lem:triv}, that
	\begin{equation}\label{eq:substheorem}
		a(m, i+t\e 1^{j-t}, i\e 1^j) = 
		\sum_{s} {i+j \choose s} a_0(m-s, i+t\e 1^{j-t-s}, i\e 1^{j-s}).
	\end{equation}
	Notice that if $i - j + t \leq 0$, then $a_0(m-s, i+t\e 1^{j-t-s}, i\e 1^{j-s}) = 0$ unless $s \geq j-i -t$.  Hence the range for $s$ in the theorem.
	Applying Lemma \ref{thm:mainsummand} to the summand in
    \eqref{eq:substheorem} and using formula \eqref{EqDistribProductLongCycles}
    gives the desired formula.
\end{proof}
We mention a particularly interesting case of multiplying two $i$-cycles in
$\Sym{n}$.
\begin{corollary}\label{mainformula}
	Suppose that $i \geq 2$, $j \geq 0$ and $i+j+m$ is even.  Then,
	\begin{align*}
		a(&m, i\e 1^j, i\e 1^{j}) = \\
		&\sum_{s=\mathrm{max}\{0, j-i +1\}}^{j} \frac{(i+j)!}{s! (i-j
		+ s)}
		{i-1 \choose j-s}^2 \frac{c(i-j+s+1, m-s)}{ {i-j+s+1 \choose 2}
	}+ \Lambda(m,i,j),
	\end{align*}
	where
	\begin{equation*}
		\Lambda(m,i,j) = 
		\begin{cases}
			0 & \mathrm{ if }\; i - j > 0;\\
			\frac{(i + j)!}{(j-i)! i^2} \delta_{m-j+i,2} & \mathrm{
			if }\; i - j \leq 0.
		\end{cases}
	\end{equation*}
\end{corollary}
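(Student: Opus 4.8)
The plan is to obtain Corollary \ref{mainformula} as the specialization $t=0$ of Theorem \ref{thm:maincount}: a product of two $i$-cycles in $\Sym{i+j}$ is exactly the instance $i+t=i$ of the factorisations counted there. First I would set $t=0$ in the statement of Theorem \ref{thm:maincount} and simplify each ingredient in turn. The summation range $\max\{0,\,j-i-t+1\}\le s\le j-t$ becomes $\max\{0,\,j-i+1\}\le s\le j$. The pair of binomial coefficients $\binom{i-1}{j-s-t}\binom{i+t-1}{j-s}$ collapses to $\binom{i-1}{j-s}\binom{i-1}{j-s}=\binom{i-1}{j-s}^2$, which is the square appearing in the corollary. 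The factorial $(i-j+t+s+1)!$ in the denominator and the argument $i-j+t+s+1$ of the signless Stirling number both lose the $t$. Finally $\Lambda(i,j,0)$ is $0$ when $i-j+0\ge 1$, i.e.\ $i-j>0$, and $\frac{(i+j)!}{i\cdot(i+0)}=\frac{(i+j)!}{i^2}$ when $i-j\le 0$; this is exactly $\Lambda(i,j)$. The parity hypothesis ``$i+j+t+m$ even'' of Theorem \ref{thm:maincount} becomes ``$i+j+m$ even'', in agreement with the sign obstruction recorded at the start of Section \ref{sec:cycles}.

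If one wants the corollary to read as self-contained rather than a one-line appeal, the same computation can be re-run directly. Apply Lemmas \ref{lem:simple} and \ref{lem:triv} to remove common fixed points, so that $a(m, i\e 1^j, i\e 1^j)=\sum_s \binom{i+j}{s}\,a_0(m-s,\,i\e 1^{j-s},\,i\e 1^{j-s})$. Then evaluate each $a_0$-term with Lemma \ref{thm:mainsummand} taken at $t=0$: in the generic range $i-j+s\ge 1$ it is a factorial/binomial multiple of $a(m-s,\,i-j+s,\,i-j+s)$, and the single boundary value $i-j+s=0$ (which occurs only when $j\ge i$) produces the Kronecker-$\delta$ term responsible for $\Lambda(i,j)$. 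Substituting the known distribution \eqref{EqDistribProductLongCycles} for $a(\,\cdot\,,r,r)$ with $r=i-j+s$, using $\binom{i+j}{s}(i+j-s)!=(i+j)!/s!$, and folding the extra factors $(i-j+s)(i-j+s+1)$ into $(i-j+s+1)!$, then yields the displayed expression.

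I do not expect a genuine obstacle: the result is a corollary both in name and in substance, and no new idea is needed beyond the manipulations already performed for Theorem \ref{thm:maincount}. The only point calling for a little care is the behaviour at the boundary of the summation --- tracking precisely when the index $i-j+s$ equals $1$ and $0$, so that the lower summation limit and the extra term $\Lambda(i,j)$ are placed correctly, and checking that the generic branch of Lemma \ref{thm:mainsummand} passes to its degenerate branch exactly at $s=j-i$. Everything else is routine bookkeeping of factorials.
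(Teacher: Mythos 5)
Your method is exactly the paper's: Corollary \ref{mainformula} is offered there with no separate argument, precisely as the $t=0$ specialisation of Theorem \ref{thm:maincount}, and your optional ``self-contained'' paragraph simply re-runs the proof of that theorem with $t=0$. So there is no methodological divergence to discuss.

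There is, however, one point you declare routine that does not go through as written. Setting $t=0$ in the summand of Theorem \ref{thm:maincount} gives
\[
\frac{2\,(i+j)!}{s!\,(i-j+s+1)!}\binom{i-1}{j-s}^{2}\,\frac{c(i-j+s+1,\,m-s)}{i-j+s},
\]
i.e.\ the factor $\frac{1}{i-j+t+s}$ (which in the theorem comes from the denominator $r(r+1)$ of \eqref{EqDistribProductLongCycles}) survives the specialisation, whereas the corollary as printed has no such factor. Your list of simplifications (``the binomials collapse, the factorial and the Stirling argument lose the $t$, $\Lambda(i,j,0)=\Lambda(i,j)$'') silently drops this factor without saying where it goes, and the two formulas are genuinely different: take $i=2$, $j=1$, $m=3$ (parity $i+j+m$ even). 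A direct count gives $a(3,\,2\,1,\,2\,1)=3$ (the three triples with $\alpha=\beta$ a transposition of $[3]$), which matches the specialised theorem, namely $0+\frac{2\cdot 3!}{1!\,3!}\cdot\frac{c(3,2)}{2}=3$, but not the printed corollary, which yields $2\,c(3,2)=6$. So the argument you outline, carried out honestly, proves the corollary \emph{with} the extra factor $\frac{1}{i-j+s}$ in each summand --- which the numerics indicate is the correct statement --- rather than the displayed formula; you need either to retain that factor and flag the discrepancy with the statement as printed, or to explain how it is supposed to disappear (it does not). Everything else in your proposal, including the treatment of the boundary term $\Lambda(i,j)$ and of the lower summation limit, is fine.
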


\subsection{Separation probabilities when neither cycle is full}\label{sec:nonfullsep}

\newcommand{\eye}{i}
In this section, we use the standard notion of separable permutations, as opposed to
strong separation used in most of Section \ref{SectSeparability}.  In this
section, we use Lemmas \ref{lem:simple} and \ref{thm:twostage} to determine the separation probability for
an arbitrary composition $I$ when multiplying two $(n-1)$-cycles in
$\Sym{n}$.  To that end, let $\lambda, \mu \vdash n$ and $I = (\eye_1,
\eye_2, \dots \eye_k)$ be a composition of $m \leq n$.
We use the notation of \cite{BernardiEtAlSeparation}: 
$\sigma^I_{\lambda,\mu}$ is
the probability that the product of two uniformly distributed permutations of
cycle types $\lambda$ and $\mu$, respectively, is $I$-separated.  When $\lambda
= \mu = (n)$, the following was proved in \cite{BernardiEtAlSeparation}:
\begin{equation}
	\sigma^I_{(n), (n)} = \frac{(n-m)! \prod (\eye_j)!}{(n-1)!^2} D^{m,k}_n,
	\label{eq:stanprob}
\end{equation}
where 
\begin{equation*}
	D^{m,k}_n=\frac{(n-1)!}{(n+k) {n+m \choose m-k}} \left( (-1)^{n-m} {n-1 \choose k-2} + \sum_{r=0}^{m-k} {n+m \choose n+k+r} {n+r +1 \choose m} \right).
\end{equation*}
We are chiefly interested in the case $\lambda=\mu=(n)$ or $(n-1,1)$.
Let $S^I_\lambda$ be the number of permutations that are $I$-separated over
all products of two permutations of cycle type $\lambda$.  Thus, 
$$S^I_{(n)}
= (n-1)!^2 \sigma^I_{(n), (n)} = (n-m)! \prod_{j} (\eye_j)! D^{m,k}_n.$$ 

Let $A^{[n]}(\lambda) = \cup_{m} A^{[n]}(m, \lambda, \lambda)$;  that is, 
$A^{[n]}(\lambda)$ it is 
set of all triples $(\sigma, \alpha, \beta)$ such that $\sigma \alpha \beta = e$
in $\Sym{n}$ and $\alpha$ and $\beta$ have cycle type $\lambda$.  Our  
goal is to determine the number of such triples in $A^{[n]}\left( (n-1,1)
\right)$ where $\sigma$ is $I$-separated.
Using the decompositions in Lemmas \ref{lem:simple} and
\ref{thm:twostage} we see that the elements in $A^{[n]}( (n-1,1))$ come in two
forms;  either the triple $(\sigma, \alpha, \beta)$ has a common fixed point or
it does not.  We take these two cases separately, with the second case being a
lot more
involved.  In speaking of separation below, we take $J = (J_1, \ldots, J_k)$
to be the set partition of $\{1,2, \ldots, m\}$, where $J_p = \{\eye_{p-1} + 1,
\dots, \eye_p\}$.  However, we remind ourselves that any set partition $J$ whose
block sizes realise the composition $I$ will have $S^J_{\lambda} = S^I_{\lambda}$.  We use this fact repeatedly below.

If $(\sigma, \alpha, \beta)$ has a common fixed point $c$, then by Lemma
\ref{lem:simple} these triples are in
bijection with triples $(\hat{\sigma}, \hat{\alpha}, \hat{\beta})$ in $A^{[n] \setminus c}\left( (n-1) \right)$.  Thus, if $c \in [m]$, $\sigma$ is $J$-separated if
and only if $\hat{\sigma}$ is $J \setminus c$-separated, and if $c \notin [m]$ then
$\sigma$ is $J$-separated if and only if $\hat{\sigma}$ is $J$-separated.  We
have a total of $\sum_{j=1}^k \eye_j S^{I \downarrow (\eye_j)}_{(n-1)}$ in the former
case, and 
\begin{equation}
	(n-m) S^I_{(n-1)}
	\label{eq:partexp1}
\end{equation}
in the latter case.  The former simplifies as
\begin{align}
	\sum_{j=1}^k \eye_j S^{I \downalphai{j}}_{(n-1)} &= \sum_{j=1 \atop
	\eye_j \neq 1}^k \eye_j S^{I \downalphai{j}}_{(n-1)} + \sum_{j=1
	\atop \eye_j = 1}^k \eye_j S^{I \downalphai{j}}_{(n-1)}\notag \\
	&= \sum_{j=1 \atop \eye_j \neq 1}^k \eye_j (n-m)! (\eye_j - 1)! \prod_{p=1
	\atop p \neq j}^k (\eye_p)! D^{m-1,k}_n
	 +\sum_{j=1
	\atop \eye_j = 1}^k \eye_j (n-m)! \prod_{p=1 \atop p \neq j}^k
	(\eye_p)! D^{m-1,k-1}_{n-1}\notag\\
	&= (n-m)! \prod_{j=1}^k (\eye_j)! \left( (k - m_1(I))
	D^{m-1,k}_{n-1} + m_1(I) D^{m-1,k-1}_{n-1}\right)\label{eq:partexp2}.
\end{align}

We see from
Lemma \ref{thm:twostage} that the set of triples $(\sigma, \alpha, \beta)$ that do not have a common fixed point is in bijective
correspondence with the triples in $\cup_{s,t \in [n] \atop s \neq t}
A^{[n] \setminus \{s,t\}}( (n-2))$,  and $(\sigma, \alpha, \beta)$ corresponds to
$(\hat{\sigma}, \hat{\alpha}, \hat{\beta})$ where $\sigma = (s\; u) (t\; v)
\hat{\sigma}$, for some $u,v \in [n] \setminus \{s,t\}$ not necessarily distinct.  Of course, the effect of $(s\; u)$ and $(t\; v)$ is
to bring $s$ into the cycle containing $u$ and $t$ into the
cycle containing $v$ in $\hat{\sigma}$.  Thus, enumerating the triples $(\sigma, \alpha,
\beta)$ with no common fixed point is equivalent to enumerating tuples $(s,t,u,v,
\hat{\sigma}, \hat{\alpha}, \hat{\beta})$, where $s,t \in [n]$ are distinct, $u,v \in
[n] \setminus \{s,t\}$ are not necessarily distinct, and $(\hat{\sigma}, \hat{\alpha}, \hat{\beta}) \in
A^{[n] \setminus \{s,t\}}( ( n-2))$. Since we are often not concerned with $\hat{\alpha}$ and 
$\hat{\beta}$ (just that they are part of the relevant triple), we will
generally omit writing them.  Furthermore, we are trying to determine in this
correspondence when $\sigma$ is $I$-separated, which will rely on a case
analysis on $s,t,u,v$ and on the separation probabilities of $\hat{\sigma}$.
We discuss this in the three cases given by the size of the
intersection of $\{s,t\}$ with $[m]$.

If $\{s,t\}$ is disjoint from $[m]$, then $\sigma$ is $J$-separated if and only
if $\hat{\sigma}$ is $J$-separated in $\Sym{[n] \setminus \{s,t\}}$.  The number of
quadruples $s,t,u,v$ for which $\{s,t\}$ are disjoint from $[m]$ are $(n-m)_2
(n-2)^2$.  Thus, the total number of tuples $(s,t,u,v, \hat{\sigma})$ in this
case is 
\begin{equation}
	(n-m)_2 (n-2)^2 S^{I}_{(n-2) }
	\label{eq:partexp0}
\end{equation}

Now suppose that exactly one of $s,t \in [m]$, and for now assume it is $s$ (the case for
$t$ is the same).  Since $s \in [m]$, then $s \in J_j$ for some $j$.  Recall,
the effect of $(s\; u)$ on $\hat{\sigma}$ is to bring $s$ into the cycle
containing $u$ in $\hat{\sigma}$.  If $u \notin [m]$ we see that $\sigma$
is $J$-separated if $\hat{\sigma}$ is $J(s,u)$-separated, where $J(s,u)$ is the
set partition with $s$ replaced by $u$.  The number of such
$\hat{\sigma}$ is $S^{I}_{(n-2) }$ and the number of quadruples $(s,t,u,v)$ when
$u \notin [m]$ is
$ \eye_j (n-m) (n-m-1)(n-2)$ (each factor gives the number of choices for
$s,t,u,v$, respectively), giving a total of $\eye_j (n-2) (n-m)_2
S^{I}_{(n-2)}$.  If $u \in [m]$, then $u$ must be in the same part as $s$,
and $\sigma$ is $J$-separated if and only if $\hat{\sigma}$ is
$J \setminus \{s\}$-separated.  Thus, the number of choices of $s,t,u,v$ when
$u \in [m]$ is
$\eye_j (n-m) (\eye_j - 1) (n-2)$, which gives a total contribution of 
$\eye_j(\eye_j - 1) (n-m)(n-2) S^{I \downalphai{j}}$ in this case.  Summing over all $j$ and
multiplying by two to account for the case $t \in [m]$ and $s \notin [m]$ we get
\begin{equation}
	2 (n-2) (n-m)_2 \sum_{j=1}^k \eye_j  S^{I}_{(n-2)} + 2(n-m)(n-2) \sum_{j=1}^k (\eye_j)_2 S^{I \downalphai{j}}_{(n-2)},\label{eq:partexp3} 
\end{equation}
as the number of tuples $(s,t,u,v, \hat{\sigma})$ where exactly one of $s,t \in
[m]$.

The final case is when both $s,t \in [m]$.  There are two subcases:  $u=v$ and
$u \neq v$.

If $u=v$, then for $\sigma$ to be $J$-separated $s,t$ must be in the same block of
$J$, say $J_j$, as $(s\; u) (t\;
u)$ will bring them into the same cycle of $\hat{\sigma}$.  If $u \in [m]$, then $u$ must be in the same block as $s$ and $t$ and $\sigma$ is
$J$-separated if and only if $\hat{\sigma}$ is $J \setminus \{s,t\}$-separated.
There are $\eye_j (\eye_j -1) (\eye_j -2)$ choices for $s,t$ and $u$ in this
case.  Thus, contributions when $u \in [m]$ total
\begin{equation}
	\sum_{j=1}^k (\eye_j)_3 S^{I \downarrow \downarrow
	(\eye_j)}_{(n-2)}\label{eq:partexp4},
\end{equation}
where ${I \downarrow \downarrow (\eye_j)}$ is the composition $I$ with $i_j$ replaced
by $i_j -2$.
If $u \notin [m]$, then $\sigma$ is $J$-separated if $\hat{\sigma}$ is
$J(s,t;u)$-separated, where $J(s,t;u)$ is $J$ with $s,t$ replaced by
$u$.  Since we have $\eye_j (\eye_j -1) (n-m)$ choices for $s,t$ and
$u$, the total contribution $u \notin [m]$ is
\begin{equation}
	\sum_{j=1}^k (\eye_j)_2 (n-m) S^{I \downarrow
	(\eye_j)}_{(n-2)}\label{eq:partexp5}.
\end{equation}

Finally, if $u \neq v$, 
first consider the case where $s$ and $t$ are in the same part of $J$.
Then $u$ and $v$ are either both in $[m]$, one is in $[m]$ and the
other is not, or neither is in $[m]$.
The contributions in each case are given by, respectively, a summand of
\begin{equation}
	\sum_{j=1}^k (\eye_j)_4 S^{I \downarrow \downarrow (\eye_j)}_{(n-2)}
	+ 2 \sum_{j=1}^k (\eye_j)_3 (n-m) S^{I \downalphai{j}}_{(n-2)} +
	\sum_{j=1}^k (\eye_j)_2 (n-m)_2 S^{I}_{(n-2)}.\label{eq:partexp6}
\end{equation}
If $s,t$ are not in the same part of $J$, then we have the following cases if
$\sigma$ is $J$-separated:
either $u,v \in [m]$, in which case $u$ is in the same block as $s$ and $v$ is in the same block as $t$ in $J$;
exactly one of $u$ or $v$ is in $[m]$, say $u$,
and then $u$ must be in the same block of $J$ as $s$;
or, $u,v \notin [m]$.
The total contribution from these cases is
\begin{equation}
	\sum_{j=1}^k \sum_{p=1 \atop p \neq j}^k (\eye_j)_2
	(\eye_p)_2 S^{I \downalphai{j} \downalphai{p}}_{(n-2)}
	+ 2 \sum_{j=1}^k \sum_{p=1 \atop p \neq j} \eye_j \eye_p (\eye_j -1) (n-m) S^{I
	\downalphai{j}}_{(n-2)}+
	\sum_{j=1}^k \sum_{p=1 \atop p \neq j}^k \eye_j \eye_p (n-m)_2 S^{I}_{(n-2)}  \label{eq:partexp7}.  
\end{equation}
Collecting like terms in \eqref{eq:partexp0} - \eqref{eq:partexp7}, we get
\begin{multline}
	\left((n-m)_2 (n-2)^2 + \sum_{j=1}^k \left( 2 (n-2) (n-m)_2  \eye_j  + (\eye_j)_2 (n-m)_2 + \eye_j
	\sum_{p \neq j} \eye_p (n-m)_2\right)\right) S^{I}_{(n-2)}\\
	+ \sum_{j=1}^k \left( 2 (\eye_j)_2 (n-m)(n-2) + (\eye_j)_2 (n-m) + 2 (\eye_j)_3 (n-m) + 2
	(\eye_j)_2 \sum_{p \neq j}(\eye_p) (n-m)\right) S^{I
	\downalphai{j}}_{(n-2)}\\
	+ \sum_{j=1}^k \left((\eye_j)_4 + (\eye_j)_3\right) S^{I \downarrow
	\downarrow (\eye_j)}_{(n-2)} + \sum_{j=1}^k (\eye_j)_2 (\eye_p)_2
	S^{I \downarrow (\eye_j) \downarrow
	(\eye_p)}_{(n-2)}.\label{eq:collected}
\end{multline}
The first sum in \eqref{eq:collected} simplifies as
\begin{align*}
	(n-m)_2 S^{I}_{(n-2)} & \left( (n-2)^2 + \sum_{j=1}^k \big( 2 (n-2)
	\eye_j +  (\eye_j)_2 + (\eye_j)(m-\eye_j) \big) \right)\\
	&= (n-m)_2 S^{I}_{(n-2)} \left( (n-2)^2 + \sum_{j=1}^k \left( 2 (n-2)
	\eye_j + \eye_j m  - \eye_j\right)\right)\\
	&= (n-m)_2 S^{I}_{(n-2)} \left( (n+m-2)^2 -m\right).
\end{align*}
The second sum above simplifies as
\begin{align*}
	(n-m) &\sum_{j=1}^k \left(2 (\eye_j)_2 (n-2) + 2(\eye_j)_3 + 2
	(\eye_j)_2 (m-\eye_j) + (\eye_j)_2\right) S^{I
	\downalphai{j}}_{(n-2)}\\
	=(n-m) &\left(2 n + 2 m  - 7\right)  \sum_{j=1}^k (\eye_j)_2 S^{I
	\downalphai{j}}_{(n-2)}\\
	=(n-m) &\left(2 n + 2 m  - 7\right) \sum_{j=1}^k (\eye_j)_2 (n-m -1)!\prod_{x
	\neq j} (\eye_x)! (\eye_1 - 1)! D^{m-1,k}_{n-2}\\
	=(n-m)!& \left(\prod (\eye_x)!\right)\left(2 n + 2 m  - 7\right)  D^{m-1,k}_{n-2} \left( \sum_{j=1}^k (\eye_j - 1) \right) \\
	= (n-m) ! &\left(\prod (\eye_x)!\right)  \left(2 n + 2 m  - 7\right) D^{m-1,k}_{n-2}  (m-k).
\end{align*}
For the third sum, we are mindful that $S^{I \downarrow \downarrow
(\eye_j)}_{(n-2)}$ is 0 if $\eye_j = 1$ and ${I \downarrow \downarrow
(\eye_j)}$ has one less part if $\eye_j = 2$.   These can both be taken care
of with the summation index and by noting that the coefficient is usually 0 in
such cases, and  likewise for the fourth term.  Thus, the third and fourth sum in \eqref{eq:collected} become
\begin{align*}
	\sum_{j=1}^k & \left( (\eye_j - 2)^2 (\eye_j)_2 \right) \prod_{x \neq j}
	(\eye_x)! (\eye_j - 2)! (n-m)! D^{m-2,k}_{n-2}\\
	&+ \sum_{j=1}^k \sum_{p \neq j}
	(\eye_j)_2 (\eye_p)_2 (n-m)! \prod_{x \neq p,j} (\eye_x)! (\eye_j
	- 1)! (\eye_p - 1)! D^{m-2,k}_{n-2}\\ 
	&= (n-m)! \left(\prod (\eye_x)!\right) D^{m-2,k}_{n-2} \left(\sum_{j=1 \atop \eye_j \neq 1} (\eye_j -
	2)^2 + \sum_{j = 1 \atop \eye_j \neq 1} (\eye_j - 1) (m - \eye_j -
	k + 1)\right)\\
	&= (n-m)! \left(\prod (\eye_x)!\right) D^{m-2,k}_{n-2} \left( (m-k-2)(m-k) + k -
	m_1(I) \right).
\end{align*}
Combining all the parts we have the following theorem.
\begin{theorem}\label{thm:bigthm}
	Suppose that $n$ is a positive integer and that $I = (i_1, \dots, i_k)$ is a composition of length $k$ of $m$,
	with $k,m \leq n$.  Then the separation probability $\sigma^I_{(n-1,1),
	(n-1,1)}$
	is given by
	\begin{multline*}
		\frac{(n-m)! \prod (\eye_j)!}{n^2 (n-2)!^2} \left( (k-m_1(I))
		D^{m-1,k}_{n-1} + m_1(I)D^{m-1,k-1}_{n-1} + D^{m,k}_{n-1}\right.\\
		+ \left.\left(
		(n+m-2)^2 - m\right) D^{m,k}_{n-2} + (m-k)(2n+2m-7)
		D^{m-1,k}_{n-2}\right.\\
		+\left.  \left( (m-k-2)(m-k) + k - m_1(I)
		\right)D^{m-2,k}_{n-2}\right).
	\end{multline*}
\end{theorem}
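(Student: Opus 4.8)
The plan is to enumerate directly the number $S^I_{(n-1,1)}$ of triples $(\sigma,\alpha,\beta)$ of permutations of $[n]$ with $\sigma\alpha\beta=e$, with $\alpha,\beta$ of cycle type $(n-1,1)$, and with $\sigma$ being $I$-separated; since $|C_{(n-1,1)}|=n(n-2)!$, the desired probability is $\sigma^I_{(n-1,1),(n-1,1)}=S^I_{(n-1,1)}/\big(n^2(n-2)!^2\big)$, which already explains the outer prefactor. Following Lemma~\ref{lem:triv}, I would split the count of such triples according to whether $\alpha$ and $\beta$ (equivalently $\sigma$) do or do not share a common fixed point, treating the two pieces separately and adding them at the end.

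For the triples with a common fixed point $c$, Lemma~\ref{lem:simple} gives a bijection with pairs consisting of a choice of $c\in[n]$ and a triple $(\hat\sigma,\hat\alpha,\hat\beta)\in A^{[n]\setminus c}\big((n-1)\big)$ -- a product of two full $(n-1)$-cycles, for which the separation count is the known formula~\eqref{eq:stanprob}. If $c\notin[m]$, deleting $c$ leaves the separation condition unchanged, contributing $(n-m)S^I_{(n-1)}$; if $c$ lies in the block $J_j$ of size $i_j$, deleting it replaces $I$ by $I\downarrow(i_j)$, and summing the $i_j$ choices of $c\in J_j$ over $j$ gives $\sum_j i_j\, S^{I\downarrow(i_j)}_{(n-1)}$. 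Substituting~\eqref{eq:stanprob} and separating the terms where $i_j=1$ (so that a block disappears and $k$ drops by one) turns this piece into~\eqref{eq:partexp1}--\eqref{eq:partexp2}.

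For the triples with no common fixed point, Lemma~\ref{thm:twostage} identifies them with tuples $(s,t,u,v,\hat\sigma,\hat\alpha,\hat\beta)$ where $s\neq t$ in $[n]$, where $u,v\in[n]\setminus\{s,t\}$ are not necessarily distinct, where $(\hat\sigma,\hat\alpha,\hat\beta)\in A^{[n]\setminus\{s,t\}}\big((n-2)\big)$, and where $\sigma=(s\;u)(t\;v)\hat\sigma$; the two transpositions splice $s$ into the cycle of $\hat\sigma$ through $u$ and $t$ into the cycle through $v$. I would run a case analysis on the size of $\{s,t\}\cap[m]$, and inside the case $\{s,t\}\subseteq[m]$ further on whether $u=v$, on whether $s$ and $t$ lie in the same block of $J$, and on the positions of $u,v$ relative to $[m]$. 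In every subcase, $J$-separation of $\sigma$ translates into $J'$-separation of $\hat\sigma$ for an explicit $J'$ obtained from $J$ by deleting or relabelling $s,t$, while the number of admissible $(s,t,u,v)$ is an explicit polynomial in $n,m$ and the $i_j$; this yields the contributions~\eqref{eq:partexp0}--\eqref{eq:partexp7}. Grouping by the ``shape'' of $J'$ (namely $I$, $I\downarrow(i_j)$, $I\downarrow\downarrow(i_j)$, or $I\downarrow(i_j)\downarrow(i_p)$) gives~\eqref{eq:collected}; substituting~\eqref{eq:stanprob} throughout, pulling out the common factor $(n-m)!\prod_j(i_j)!$, and using elementary identities such as $\sum_j(i_j-1)=m-k$ and $\sum_j\big((i_j)_2+i_j(m-i_j)\big)=m(m-1)$ collapses the polynomial coefficients to $(n+m-2)^2-m$, $(m-k)(2n+2m-7)$, and $(m-k-2)(m-k)+k-m_1(I)$ respectively. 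Adding the two pieces and dividing by $n^2(n-2)!^2$ gives the stated formula.

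The main obstacle will be the combinatorial bookkeeping in the second piece: one must be certain that each way for $\sigma$ to be $J$-separated is counted exactly once across all the subcases, and one must absorb the degenerate situations where some $i_j\in\{1,2\}$ -- so that $I\downarrow\downarrow(i_j)$ would acquire a nonpositive part or fewer parts -- into the summation ranges, relying on the fact that the accompanying polynomial coefficients (such as $(i_j-2)^2(i_j)_2$) vanish at $i_j=1$ while the drop in the number of parts at $i_j=2$ is recorded by passing to $D^{m-2,k}_{n-2}$. Once the case analysis is correct, the remaining algebra, though lengthy, is entirely routine.
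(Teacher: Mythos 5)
Your proposal is correct and follows essentially the same route as the paper: the same split via Lemmas \ref{lem:simple} and \ref{thm:twostage} into triples with and without a common fixed point, the same case analysis on $|\{s,t\}\cap[m]|$ (and on $u=v$, block membership of $s,t$, and positions of $u,v$) producing the contributions \eqref{eq:partexp0}--\eqref{eq:partexp7}, and the same collection and simplification using \eqref{eq:stanprob}, with the degenerate cases $i_j\in\{1,2\}$ handled exactly as in the paper. Nothing essential is missing.
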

In \cite[Equation 1]{BernardiEtAlSeparation}, the authors state that
$\frac{\sigma^I_{\lambda, (n)}}{\prod (\eye_j)!}$ depends only on 
$n, \lambda,m$ and $k$.
For the special case when $\lambda = (n)$, this is apparent from the explicit
form of $\sigma^I_{(n),(n)}$ given in \eqref{eq:stanprob}.
In the same spirit, we have the following corollary.  Note the unexpected analogy with
the case $\rho = (1)$ in Theorem \ref{ThmDependence}. 
\begin{corollary}
	Suppose that $n \geq 1$ and $I = (\eye_1, \dots, \eye_k)$ is a
	composition of $m$.  Then $\frac{\sigma^I_{(n-1, 1), (n-1,1)}}{\prod (\eye_j)!}$
	depends only on $n,m,k$ and $m_1(I)$.
\end{corollary}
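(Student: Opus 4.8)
The plan is to read the statement straight off the explicit formula for $\sigma^I_{(n-1,1),(n-1,1)}$ furnished by Theorem~\ref{thm:bigthm}. First I would recall two elementary facts about the ingredients of that formula. On the one hand, by \eqref{eq:stanprob} and the displayed definition of $D^{m,k}_n$, each quantity $D^{m,k}_n$ is a function of the three integer parameters $m$, $k$, $n$ alone, with no residual dependence on the composition $I$. On the other hand, the combinatorial prefactor occurring in Theorem~\ref{thm:bigthm}, namely $\dfrac{(n-m)!\,\prod_j (\eye_j)!}{n^2\,(n-2)!^2}$, depends on $I$ only through $m$ and through the single product $\prod_j (\eye_j)!$.

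Next I would divide the formula of Theorem~\ref{thm:bigthm} by $\prod_j (\eye_j)!$. This cancels the unique occurrence of $\prod_j (\eye_j)!$ and leaves
\[
\frac{\sigma^I_{(n-1,1),(n-1,1)}}{\prod_j (\eye_j)!}
= \frac{(n-m)!}{n^2\,(n-2)!^2}\,\Big(\text{the bracketed sum of Theorem~\ref{thm:bigthm}}\Big).
\]
I would then inspect the six summands inside the bracket one at a time. Each is the product of an integer coefficient --- one of $k-m_1(I)$, $m_1(I)$, $1$, $(n+m-2)^2-m$, $(m-k)(2n+2m-7)$, or $(m-k-2)(m-k)+k-m_1(I)$ --- with one of the numbers $D^{m-1,k}_{n-1}$, $D^{m-1,k-1}_{n-1}$, $D^{m,k}_{n-1}$, $D^{m,k}_{n-2}$, $D^{m-1,k}_{n-2}$, $D^{m-2,k}_{n-2}$. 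In every case the coefficient is manifestly a polynomial in $n$, $m$, $k$, $m_1(I)$, and the accompanying $D$-factor depends only on $n$, $m$, $k$ by the first step. Hence the entire right-hand side is a function of $n$, $m$, $k$ and $m_1(I)$ only, which is precisely the claim.

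Since the argument is pure bookkeeping, there is no genuine obstacle: the content of the corollary was already secured the moment Theorem~\ref{thm:bigthm} was established. The only point I would flag explicitly is that the intermediate quantities $S^I_{(n-2)}$, $S^{I\downalphai{j}}_{(n-2)}$, $S^{I\downarrow\downarrow(\eye_j)}_{(n-2)}$, and so on, were expanded via \eqref{eq:stanprob} exactly so that all of their $I$-dependence is funnelled into $\prod_j (\eye_j)!$, $m$, $k$ and $m_1(I)$; the corollary merely records that this funnelling is preserved when the pieces \eqref{eq:partexp0}--\eqref{eq:partexp7} are collected. If one wished to avoid quoting the final assembled formula, an alternative (but strictly longer) route would be to revisit that collection step and check term by term that each coefficient simplified there --- e.g.\ $\sum_j(\eye_j-1)=m-k$ and $\sum_{j:\eye_j\neq 1}1=k-m_1(I)$ --- is already expressed through $m$, $k$ and $m_1(I)$; this is the same observation, just made one stage earlier.
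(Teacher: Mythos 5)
Your proposal is correct and is essentially the paper's own argument: the corollary is read off directly from the explicit formula of Theorem \ref{thm:bigthm}, since after dividing by $\prod_j (\eye_j)!$ every remaining factor (the coefficients and the $D^{\cdot,\cdot}_{\cdot}$ terms) depends only on $n$, $m$, $k$ and $m_1(I)$.
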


This result invites us to state the following conjecture.  We have computational evidence for
the following conjecture for small $a$ and $n$.

\begin{conjecture}\label{ConjGenSymmetry}
    Let $a$ and $n$ be two non-negative integers with $a < n$,
    $\lambda$ a partition of $n$
    and $I = (\eye_1, \dots, \eye_k)$ is a
    composition of $m$.
    Then $\frac{\sigma^I_{(n-a, 1^a), \lambda}}{\prod (\eye_j)!}$
    depends on $a,n,\lambda,m,k,m_1(I),\dots,m_{a-1}(I)$ but not
    the higher multiplicities of $I$.
\end{conjecture}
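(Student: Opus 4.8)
The plan is an induction on $a$. Since $(n-a,1^a)=(n)$ when $a=0$, the base case $a=0$ is precisely the symmetry property \cite[Eq.~(1)]{BernardiEtAlSeparation} of Bernardi, Du, Morales and Stanley. It is convenient to count triples: for $\lambda\vdash n$ write $\sigma^I_{(n-a,1^a),\lambda}=S^I_{n,a}(\lambda)/\big(|C_{(n-a,1^a)}|\,|C_\lambda|\big)$, where $S^I_{n,a}(\lambda)$ is the number of triples $(\sigma,\alpha,\beta)$ of permutations of $[n]$ with $\sigma\alpha\beta=e$, with $\beta$ of type $(n-a,1^a)$, with $\alpha$ of type $\lambda$, and with $\sigma$ an $I$-separated permutation; note that $\sigma$ is not constrained to a prescribed cycle type, only its being $I$-separated is imposed. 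Since $|C_{(n-a,1^a)}|$ and $|C_\lambda|$ do not depend on $I$, it suffices to establish the claimed dependence for $\widetilde{S}^I_{n,a}(\lambda):=S^I_{n,a}(\lambda)/\prod_j i_j!$.

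The heart of the proof is a separation-enriched version of Lemma \ref{LemAppliedInduction}, obtained by carrying the separation statistic through the bijection underlying Lemma \ref{thm:onestage} (and Lemma \ref{thm:onestageno}, together with Lemma \ref{lem:simple} to peel off common fixed points). Starting from $\lambda\vdash n+1$ and the near-hook $(n-a,1^{a+1})$ (which has $a+1\ge 1$ fixed points, so the lemma applies), the bijection rooted at a fixed point $s$ of $\beta$ replaces $\sigma$ by $\hat\sigma=(s\ \sigma(s))\,\sigma$, i.e. it deletes the single point $s$ from its cycle in $\sigma$, while replacing $\alpha$ by a permutation of type $\lambda^{\downarrow(j)}\vdash n$ and $\beta$ by a permutation of type $(n-a,1^a)\vdash n$. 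What has to be added to the proof of Lemma \ref{LemAppliedInduction} is the effect of deleting $s$ on $I$-separation: if $s\notin[m]$ the property is unchanged, and if $s\in J_h$ then $\sigma$ is $I$-separated iff $\hat\sigma$ is $(I-\delta_h)$-separated and $s$ is reinserted into a cycle of $\hat\sigma$ that either avoids $[m]$ entirely or meets $[m]$ only inside $J_h$; the number of admissible reinsertion slots is the same $R$-type polynomial in the cycle lengths of $\hat\sigma$ that governs Lemma \ref{LemRecRI}. Bookkeeping the choices and dividing by $\prod_j i_j!$, I expect a recursion of the schematic shape
\[
  n(n+1)\,\widetilde{S}^I_{n+1,a+1}(\lambda)
  = \sum_{j\ge 2}\Big( d_j\,\widetilde{S}^I_{n,a}\!\big(\lambda^{\downarrow(j)}\big)
  + e_j\sum_{h=1}^{k}(i_h-1)\,\widetilde{S}^{\,I-\delta_h}_{n,a}\!\big(\lambda^{\downarrow(j)}\big)\Big),
\]
with $d_j,e_j$ explicit polynomials in $n,a,m$ not depending on $I$; this specialises to the recursion in the proof of Proposition \ref{PropDepPTilde} when $\lambda=(n-a,1^a)$, where $\sum_h(i_h-1)=m-k$.

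Granting this recursion, the induction closes as follows. On the right-hand side $\lambda^{\downarrow(j)}$ is a partition of $n$ determined by $\lambda$ and $j$, and by the inductive hypothesis at level $a$ the quantity $\widetilde{S}^{I}_{n,a}(\lambda^{\downarrow(j)})$ depends only on $n,\lambda^{\downarrow(j)},m,k$ and $m_1(I),\dots,m_{a-1}(I)$. For the terms with $I-\delta_h$: decreasing a part $i_h$ by one leaves every multiplicity $m_r$ with $r\le a-1$ unchanged when $i_h\ge a+1$, and alters them in a way determined by $i_h$ and $m_1(I),\dots,m_{a-1}(I)$ when $i_h\le a$, while the length $k$ is unchanged unless $i_h=1$. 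Grouping $\sum_h(i_h-1)(\cdots)$ by the value $v=i_h$, the contributions with $v\le a$ sum to $\sum_{v=1}^{a}m_v(I)(v-1)(\cdots)$ and the contributions with $v\ge a+1$ amount to one fixed term times $(m-k)-\sum_{v=1}^a m_v(I)(v-1)$. Hence, after dividing through by $\prod_j i_j!$, the whole right-hand side is a function of $n,\lambda,m,k$ and $m_1(I),\dots,m_a(I)$ only, which is exactly the claim at level $a+1$.

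The step I expect to be the main obstacle is making the recursion above fully rigorous: carrying the $I$-separation statistic through the bijection of Lemma \ref{thm:onestage} when $\lambda$ has $2$-cycles, so that $\lambda^{\downarrow(2)}$ acquires a new fixed point which may coincide with a fixed point of $\beta$ (forcing the $A_0$ versus $A$ distinction and Lemma \ref{lem:simple}), and then verifying that the coefficients organise themselves so that precisely one further multiplicity of $I$ is consumed at each step rather than more. Unlike Section \ref{sec:nonfullsep}, there is no closed form such as $D^{m,k}_n$ available here, so this has to be done structurally; a possible shortcut, if it could be made to work, would be to lift the whole induction into the symmetric-function formalism of Section \ref{sec:symfunc} with an auxiliary alphabet recording the block sizes of $I$, replacing the case analysis by an identity of differential operators.
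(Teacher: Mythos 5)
This statement is left as an open conjecture in the paper (the authors offer only computational evidence for small $a$ and $n$), so there is no proof of it to compare against; and your proposal, as written, is not a proof either. Its entire weight rests on the ``separation-enriched version of Lemma \ref{LemAppliedInduction}'', which you only state in ``schematic shape'' with unspecified coefficients $d_j,e_j$ and yourself flag as the unresolved obstacle. The concrete difficulty is worse than a bookkeeping issue: in the bijection of Lemma \ref{thm:onestage} the point $s$ is \emph{not} reinserted into $\hat\sigma$ at a free or uniformly distributed position --- its insertion slot is dictated by $\hat\alpha$, $\hat\beta$ and the chosen root --- so the number of preimages of $(\hat\sigma,\hat\alpha,\hat\beta)$ whose $\sigma$ is $I$-separated is not a function of the separation status and cycle type of $\hat\sigma$ alone, and no ``$R$-type polynomial in the cycle lengths of $\hat\sigma$'' counts it. The paper's own Theorem \ref{ThmRecBonaPb} sidesteps exactly this: it never carries the separation statistic through the bijection, but instead conditions on the cycle type of the product (Lemma \ref{LemProbCond}, using uniformity on conjugacy classes), applies the purely numerical recursion of Lemma \ref{LemAppliedInduction} for $\nu_{1^a}$, and then invokes the polynomial identity of Lemma \ref{LemRecRI}. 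That route is only available because one factor is a full cycle, which is what makes Lemma \ref{LemAppliedInduction} ($m=1$ in Corollary \ref{thm:corschaeff}) usable; for the conjecture the second factor is an arbitrary $\lambda$, and no analogous refined recursion for $c^{\nu}_{(n-a,1^a),\lambda}$ is known --- that is essentially why the statement is a conjecture.

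Two further mismatches in your plan need attention even at the level of the strategy. First, the conjecture and your base case (BDMS Eq.~(1)) concern \emph{standard} $I$-separation, whereas the recursion you are imitating (Theorem \ref{ThmRecBonaPb}, Proposition \ref{PropDepPTilde}, and the formula for $P^I(\lambda)$ in Section \ref{SubSectSepType}) is for \emph{strong} separation; the authors explicitly remark that transferring their inductive argument to standard separation ``seems difficult'', and the triangular change of basis between the two notions does not obviously preserve the property ``depends only on $m_1(I),\dots,m_{a-1}(I)$ after dividing by $\prod_j i_j!$''. Second, your closing induction assumes the recursion consumes exactly one multiplicity of $I$ per step; that is a property of the specific coefficients, which you have not produced. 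So the proposal is a reasonable programme --- and its final paragraph correctly identifies where it breaks --- but as it stands the key lemma is missing, and supplying it is the open problem itself.
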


\bibliographystyle{alpha}
\bibliography{OnFactorizationNumbers}

\section*{Acknowledgements}

This research project was started during a visit of AR in Bordeaux.
This visit was funded {\em via} the ``invité junior'' program of LaBRI.
AR would like to thank people in LaBRI for their generous hospitality.

V.F. is partially supported by ANR grant PSYCO ANR-11-JS02-001.

Both authors would like to thank the anonymous referees for their helpful
suggestions.
\end{document}